\newcommand{\mc}[1]{\mathcal{#1}}
\newcommand{\abs}[1]{\lvert#1\rvert}
\newcommand{\norm}[1]{\lVert#1\rVert}
\newcommand{\wt}[1]{\widetilde{#1}}
\newcommand{\Or}{\mathcal{O}}
\newcommand{\REV}[1]{#1}
\newtheorem{remark}[theorem]{Remark}
\title{Localized spectrum slicing}
\author{
Lin Lin 
\thanks{
Department of Mathematics, University of California Berkeley
and Computational Research Division, Lawrence Berkeley National
Laboratory, Berkeley, CA 94720. Email: linlin@math.berkeley.edu} 
}
\begin{document}

\maketitle

\begin{abstract}
  Given a sparse Hermitian matrix $A$ and a real number $\mu$, we
  construct a set of sparse vectors, each approximately spanned only by
  eigenvectors of $A$ corresponding to eigenvalues near $\mu$. This set
  of vectors spans the column space of a localized spectrum slicing
  (LSS) operator, and is called an LSS basis set.  The sparsity of the
  LSS basis set is related to the decay properties of matrix Gaussian
  functions.  We present a divide-and-conquer strategy with controllable
  error to construct the LSS basis set. This is a purely algebraic
  process using only submatrices of $A$, and can therefore be applied to
  general sparse Hermitian matrices.  
  The LSS basis set leads to sparse projected matrices with reduced
  sizes, which allows the projected problems to be solved efficiently
  with techniques using sparse linear algebra. As an example, we demonstrate
  that the LSS basis set can be used to solve interior eigenvalue
  problems for a discretized second order partial differential operator
  in one-dimensional and two-dimensional domains, \REV{as well as for a
  matrix of general sparsity pattern}.
\end{abstract}

\begin{keywords} 
Spectrum slicing; Localization; Decay properties; Basis set; Interior
eigenvalue problem 
\end{keywords}

\begin{AMS}
  65F60, 65F50, 65F15, 65N22
\end{AMS} \pagestyle{myheadings}
\thispagestyle{plain}
\markboth{L. LIN}{Localized spectrum slicing}

\section{Introduction}\label{sec:intro}

Let $A$ be an $n\times n$ large, sparse, Hermitian matrix.  In many
applications in science and engineering, one would like to find
eigenvalues and eigenfunctions of $A$ near a given real number $\mu$.
As a motivating problem, we consider $A$ to be obtained
from a certain discretization (e.g. finite difference or finite element
discretization) of a second order partial differential operator of the
form $-\Delta + V(x)$, where $\Delta$ is the Laplacian operator, and
$V(x)$ is a potential function.  
Depending on the context and the choice of $V$, this type of problems
can arise from quantum mechanics, wave propagation, electromagnetism
etc.

When $\mu$ locates inside the spectrum of $A$, the
eigenvalues to be computed are called interior eigenvalues.  These
interior eigenvalues and corresponding eigenfunctions are in general
difficult to compute.  Since $n$ is large and $A$ is sparse, iterative
methods such as inverse power method~\cite{GolubVan1996}, preconditioned
conjugate gradient type of
methods~\cite{BradburyFletcher1966,Davidson1975,Knyazev2001}, and
shift-inverse Lanczos type of
methods~\cite{LehoucqSorensen2000,ZhangSmithSternbergEtAl2007} are
desirable.  The effectiveness of such methods often depends on the
availability of a good preconditioner that can approximately apply
$(A-\mu I)^{-1}$ to vectors, and such preconditioner can be difficult to
construct.  

Another type of methods that recently receives increasing amount of
attention is based on the construction of a matrix function
$f_{\mu}(A)$, where the corresponding scalar function $f_{\mu}(z)$ only
takes significant values on a small interval near $\mu$ on the real
line. 
Such a matrix function $f_{\mu}(A)$ can be called a
\textit{spectrum slicing} operator, since for any vector
$v\in\mathbb{C}^{n}$, $f_{\mu}(A) v$ is approximately only spanned by eigenvectors
of $A$ corresponding to eigenvalues near $\mu$, and the vector
$f_{\mu}(A) v$ is said to be \textit{spectrally localized}. The spectrum
slicing operator can be simultaneously applied to a set of random
vectors $V=[v_{1},\ldots,v_{p}]$. When $p$ is large enough but is still
small compared to $n$, the subspace spanned
by 
\[
W=f_{\mu}(A)V
\] 
will approximately contain the subspace of all eigenvectors
corresponding to eigenvalues near $\mu$.  Let 
\[
A_{W} = W^{*} A W, \quad B_{W} = W^{*} W, 
\]
then the desired eigenvalues and eigenvectors can be
computed via the solution of a generalized eigenvalue problem
\begin{equation}
  A_{W} C = B_{W} C \Theta.
  \label{eqn:geneig}
\end{equation}
In practice $f_{\mu}(A)$ can be constructed through relatively high
order Chebyshev polynomials~\cite{SchofieldChelikowskySaad2012}, or
contour integral based methods~\cite{Polizzi2009,SakuraiSugiur2003}. It
should be noted that contour integral based methods still require
solving equations of the form $(A-z I)^{-1}v$ where $z$ is close to
$\mu$ in the complex plane, either through direct methods or iterative
methods.

In general the spectrum slicing operator $f_{\mu}(A)$ is a dense matrix.
Therefore the matrix $W=f_{\mu}(A)V$ is in general a dense matrix, regardless of how
the initial matrix $V\in\mathbb{C}^{n\times p}$ is chosen.  Furthermore, the matrices
$A_{W},B_{W}$ are in general dense matrices that do not reveal much
structure to be further exploited, and the solution of the projected
problem~\eqref{eqn:geneig} may still be expensive when $p$ is large.  

\subsection{Contribution}


In this paper, we consider the use of a simple choice of Gaussian
function with a positive number $\sigma$
\begin{equation}
  f_{\sigma,\mu}(z) = 
  e^{-\frac{(z-\mu)^2}{\sigma^2}},
  \label{eqn:gaussian}
\end{equation}
and the corresponding matrix Gaussian function $f_{\sigma,\mu}(A)$ is
spectrally localized near $\mu$ with width proportional to $\sigma$.
We demonstrate that under a proper choice of $\sigma$,
$f_{\sigma,\mu}(A)$ can have
many entries that are small in magnitude, so that after truncating these
small entries the resulting matrix is close to be a
spectrum slicing operator but is also sparse. In this sense,
$f_{\sigma,\mu}(A)$ is called a \textit{localized spectrum slicing (LSS)
operator}.   

We demonstrate that the LSS operator $f_{\sigma,\mu}(A)$ can be 
constructed in a divide-and-conquer method with controllable error using
only a sequence of submatrices of $A$ with $\Or(n)$ cost, 
\REV{under certain assumptions of the behavior of the sparsity, spectral
radius, and sizes of submatrices of $A$ as $n$ increases.}
The column space of the LSS
operator is spanned by a sparse matrix $U\in \mathbb{C}^{n\times p}$,
and the subspace spanned by $U$ will approximately contain the subspace
of eigenvectors to be computed.  As a result, the projected matrices
\begin{equation}
A_{U} = U^{*} A U, \quad B_{U} = U^{*} U
  \label{eqn:projAB}
\end{equation}
are sparse matrices. In this aspect, the matrix $U$ can be regarded as a
specially tailored basis set for representing the subspace approximately spanned by
eigenvectors of $A$ near $\mu$, and each column of $U$ is localized both
spectrally and spatially.  In the following text $U$ is called a
\textit{localized spectrum slicing (LSS) basis set}. The LSS basis set
can be constructed without explicitly constructing the LSS operator.
The generalized eigenvalue problem for the sparse projected matrices
$A_{U},B_{U}$ may be solved both by direct methods, but also by 
methods using sparse linear algebra techniques.  During the construction
of the LSS operator and/or the LSS basis set, a good global preconditioner for
$(A-\mu I)^{-1}$ is not needed. We demonstrate the construction of the
LSS basis set and its use for solving interior eigenvalues problems for
matrices obtained from discretizing second order partial differential
operators, and find that the use of the LSS basis set can be more
efficient than solving the global problem directly for matrices of large
sizes. \REV{We also apply the LSS method to a general matrix from the
University of Florida matrix collection~\cite{FloridaMatrix}.}

%
%
%

\subsection{Related work}

The spectral locality of the LSS operator is valid by construction.
Comparatively the spatial locality of the LSS operator is less obvious,
and is given more precisely by the \textit{decay properties} of matrix
functions that are analytic in a certain region in the complex plane
(see e.g.~\cite{BenziBoitoRazouk2013,BenziGolub1999,BenziRazouk2007}).
The decay properties of matrix functions were first realized for matrix
inverse 
$A^{-1}$ (i.e. $f(z)=z^{-1}$), where $A$ is a banded, positive definite
matrix~\cite{Demko1977,DemkoMossSmith1984}.  The method for showing
decay properties relies on whether $f(z)$ can be well approximated by a
low order Chebyshev polynomial evaluated at the eigenvalues of $A$, and
this method is therefore generalizable to any analytic function $f(z)$ for
banded matrices $A$.  In order to generalize from banded
matrices to general sparse matrices, decay properties should be
defined using geodesic distances of the graph induced by $A$. These
techniques have been shown
in~\cite{BenziBoitoRazouk2013,BenziRazouk2007} and references therein,
for demonstrating the decay properties of e.g. Fermi-Dirac operators in
electronic structure theory. These techniques are directly used for
showing the decay properties of the LSS operator in this work, which
then allows the construction of the divide-and-conquer method. In physics
literature, such decay property is dubbed ``near-sightedness property''
and is vastly studied using various
models (see e.g.~\cite{Kohn1996,Nenciu1983,ProdanKohn2005}). The decay
property
is also used for constructing linear scaling
algorithms~\cite{BowlerMiyazaki2012,Goedecker1999} for density
functional theory calculations.

\subsection{Contents}
The rest of this paper is organized as follows. We introduce the decay properties of matrix functions and in
particular the localized spectrum slicing operator in
section~\ref{sec:prelim}.  Based on the decay properties,
section~\ref{sec:lss} describes a divide-and-conquer algorithm for
constructing the LSS operator and the LSS basis set, and provides the
error bound and
computational complexity. 
\REV{The computation of interior eigenvalues and a domain
partitioning strategy for general sparse matrices are also discussed.
We demonstrate numerical results using the LSS basis set
for solving interior eigenvalue problems in section~\ref{sec:numer},} and discuss
the conclusion and future work in section~\ref{sec:conclusion}.

%
%
%


\section{Preliminaries}\label{sec:prelim}

\subsection{Notation}

The $(i,j)$-th element of a matrix $A\in \mathbb{C}^{n\times n}$ is
denoted by $A_{ij}$.  The submatrix of $A$ corresponding to a set
of row indices $\mc{I}$ and a set of column indices $\mc{J}$ is denoted
by $A_{\mc{I},\mc{J}}$. Using MATLAB
notation, all elements in the $i$-th row of $A$ are denoted by
$A_{i,:}$, and all elements in a set of rows $\mc{I}$ are denoted by
$A_{\mc{I},:}$.  Similarly, all elements in the $j$-th column of $A$ are
denoted by $A_{:,j}$, and all elements for a set of columns $\mc{J}$ are
denoted by $A_{:,\mc{J}}$.  The $k$-th power of $A$ is denoted by
$A^{k}$.  The matrix $p$-norm of $A$ is denoted by $\norm{A}_{p}$, 
and the vector $p$-norm of a vector $u$ is denoted by $\norm{u}_{p}$
($p\ge 1$).  The max norm of a matrix is denoted by
$\norm{A}_{\max}\equiv \max_{i,j}\{\abs{A_{ij}}\}$, which is the same as the
$\infty$-norm of a vector of length $n^2$, formed by all the elements of
$A$.
The Hermitian conjugate of $A$ is denoted by $A^{*}$.
Depending on the context, we may also refer to a matrix as an
\textit{operator}.

A Hermitian matrix $A$ induces an undirected graph $G=(\mc{V},\mc{E})$ with
$\mc{V}=\{i\vert i=1,\ldots,n\}$, and
$\mc{E}=\{(i,j)\vert A_{ij}\ne 0, \quad 1\le i,j\le n\}$. Each
element in $\mc{V}$ is called a vertex, and each element in
$\mc{E}$ is called an edge. The cardinality of a set of indices $\mc{I}$
is denoted by $\abs{\mc{I}}$.

A Hermitian matrix $A\in \mathbb{C}^{n\times n}$ has the eigen-decomposition
\begin{equation}
  AX=X\Lambda.
  \label{eqn:eigenA}
\end{equation}
Here $\Lambda=\mathrm{diag}[\lambda_{1},\ldots,\lambda_{n}]$ is a
diagonal matrix containing the (real) eigenvalues of $A$ and we assume
$\lambda_{1}\le \lambda_{2}\le \cdots\le \lambda_{n}$ are ordered
non-decreasingly.  $X=[x_{1},\ldots,x_{n}]$ and $x_{i}$ is the
eigenvector corresponding to the eigenvalue $\lambda_{i}$.  
If all eigenvalues (and corresponding eigenvectors) to be computed are
with in a small interval $(\mu-c,\mu+c)$ on the real line with
$\lambda_{1}<\mu-c<\mu+c<\lambda_{n}$, then this problem is called an
interior eigenvalue problem.

%

\subsection{Decay property of matrix functions}\label{subsec:decay}

In this section, we provide a short but self-contained description of
the decay properties of $f_{\sigma,\mu}(A)$. More details on the
description of the decay properties of general matrix functions can be found
in~\cite{BenziBoitoRazouk2013} and references therein.


Let $k$ be a non-negative integer, and $\mathbb{P}_{k}$ be the set of
all polynomials of degrees less than or equal to $k$ with real
coefficients. Without loss of generality we assume the eigenvalues of
$A$ are within the interval $(-1,1)$. For a real continuous function $f$ on $[-1,1]$, the best
approximation error is defined as
\begin{equation}
  E_{k}(f) = \REV{\min_{p\in \mathbb{P}_{k}}}\left\{ \norm{f-p}_{\infty}\equiv
  \max_{-1\le x\le 1}\abs{f(x)-p(x)}\right\}.
  \label{eqn:approxerror}
\end{equation}
Consider an ellipse in the complex plane $\mathbb{C}$ with foci in $-1$
and $1$, and $a>1,b>0$ be the half axes so that the vertices of the
ellipse are $a,-a, ib, -ib$, respectively. Let the sum of the half axes
be $\chi=a+b$, then using the identity $a^2-b^2=1$ we have
\[
a = \frac{\chi^2+1}{2\chi}, \quad b = \frac{\chi^2-1}{2\chi}.
\]
Thus the ellipse is determined only by $\chi$, and such ellipse is
denoted by $\mathcal{E}_{\chi}$.
Then Bernstein's theorem~\cite{MeinardusSchumaker1967} is stated  as follows.
\begin{theorem}[Bernstein]
  Let $f(z)$ be analytic in $\mathcal{E}_{\chi}$ with $\chi>1$, and
  $f(z)$ is a real valued function for real $z$. Then
  \begin{equation}
    E_{k}(f) \le \frac{2 M(\chi)}{\chi^{k} (\chi-1)},
    \label{eqn:bernstein}
  \end{equation}
  where
  \begin{equation}
    M(\chi) = \sup_{z\in \mathcal{E}_{\chi}}\abs{f(z)}.
    \label{eqn:Mchi}
  \end{equation}
  \label{thm:bernstein}
\end{theorem}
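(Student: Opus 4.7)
The plan is to prove Bernstein's theorem by constructing an explicit polynomial approximation via truncation of the Chebyshev series of $f$ on $[-1,1]$, and then estimating the tail using the analyticity hypothesis. The core tool is the Joukowski map
\[
z = \tfrac{1}{2}\left(w + w^{-1}\right),
\]
which sends the circle $|w|=\chi$ to the boundary of $\mathcal{E}_{\chi}$, since the semi-axes come out exactly to $a=(\chi+\chi^{-1})/2$ and $b=(\chi-\chi^{-1})/2$, matching the formulas in the theorem statement.

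First I would set $g(w) = f\bigl(\tfrac{1}{2}(w+w^{-1})\bigr)$. Because $f$ is analytic in $\mathcal{E}_{\chi}$, $g$ is analytic in the annulus $\chi^{-1} < |w| < \chi$ (after a limiting argument in $\chi$ if $\mathcal{E}_{\chi}$ is interpreted as open), so it admits a Laurent expansion $g(w) = \sum_{n\in\mathbb{Z}} a_n w^n$. The Joukowski map satisfies $z(w)=z(w^{-1})$, hence $g(w)=g(w^{-1})$, forcing $a_{-n}=a_n$. Combined with the identity $T_n(z) = \tfrac{1}{2}(w^n + w^{-n})$, this yields the Chebyshev expansion
\[
f(x) \;=\; a_0 + 2\sum_{n=1}^{\infty} a_n T_n(x),\qquad x\in[-1,1].
\]

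Next I would bound the Laurent coefficients via Cauchy's integral formula on the contour $|w|=\chi$. Since $|g(w)|=|f(z)|\le M(\chi)$ there, the standard estimate yields $|a_n|\le M(\chi)\,\chi^{-n}$ for all $n\ge 0$. Define the truncation $p_k(x) = a_0 + 2\sum_{n=1}^{k} a_n T_n(x) \in \mathbb{P}_k$. Using $|T_n(x)|\le 1$ on $[-1,1]$ and summing the resulting geometric tail gives
\[
E_k(f) \;\le\; \|f-p_k\|_\infty \;\le\; 2\,M(\chi)\sum_{n=k+1}^{\infty}\chi^{-n}
\;=\; \frac{2\,M(\chi)\,\chi^{-(k+1)}}{1-\chi^{-1}}
\;=\; \frac{2\,M(\chi)}{\chi^{k}(\chi-1)},
\]
which is exactly \eqref{eqn:bernstein}.

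The only delicate point I anticipate is the analyticity of $g$ up to the boundary $|w|=\chi$. The cleanest fix is to apply the argument on any smaller ellipse $\mathcal{E}_{\chi'}$ with $1<\chi'<\chi$, obtain the bound with $M(\chi')$ in place of $M(\chi)$, and then let $\chi'\uparrow\chi$, using $M(\chi')\le M(\chi)$ (by the maximum modulus principle applied to $f$) together with continuity of $\chi^{-k}(\chi-1)^{-1}$. The symmetry reduction $a_{-n}=a_n$ and the factor-of-two bookkeeping between $|a_n|$ and the Chebyshev coefficient $c_n=2a_n$ is where the constant $2$ in the numerator of \eqref{eqn:bernstein} arises; I would double-check this step carefully to make sure the constant is sharp as stated.
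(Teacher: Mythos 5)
Your proof is correct. Note, however, that the paper does not prove this statement at all: Bernstein's theorem is quoted as a known result with a citation to Meinardus--Schumaker, so there is no in-paper argument to compare against. What you have written is precisely the classical proof that the cited reference supplies: pull back through the Joukowski map $z=\tfrac12(w+w^{-1})$, use the symmetry $g(w)=g(w^{-1})$ and $T_n\bigl(\tfrac12(w+w^{-1})\bigr)=\tfrac12(w^n+w^{-n})$ to identify the Laurent coefficients with Chebyshev coefficients, bound them by Cauchy's estimate on $\lvert w\rvert=\chi$, and sum the geometric tail, which reproduces the constant $\tfrac{2M(\chi)}{\chi^{k}(\chi-1)}$ exactly. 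Your handling of the two delicate points is also fine: the factor of $2$ indeed comes from $c_n=2a_n$ together with $\lvert T_n\rvert\le 1$, and the limiting argument $\chi'\uparrow\chi$ closes the boundary-analyticity issue -- though there the monotonicity $M(\chi')\le M(\chi)$ follows already from the inclusion $\mathcal{E}_{\chi'}\subset\mathcal{E}_{\chi}$ in the definition \eqref{eqn:Mchi}, with no need to invoke the maximum modulus principle.
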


Using Theorem~\ref{thm:bernstein}, a more quantitative description of
the approximation properties for $f_{\sigma,\mu}(z)$ in
Eq.~\eqref{eqn:gaussian} is given in
Theorem~\ref{thm:approxGaussian}.
\begin{theorem}
  Let $f_{\sigma,\mu}(z)$ be a Gaussian function defined in
  Eq.~\eqref{eqn:gaussian}, then for any $\alpha>0$,
  \begin{equation}
    E_{k}(f_{\sigma,\mu)} \le 
    \frac{2}{\alpha\sigma} e^{\alpha^2}
    (1+\alpha\sigma)^{-k}.
    \label{eqn:approxGaussian}
  \end{equation}
  \label{thm:approxGaussian}
\end{theorem}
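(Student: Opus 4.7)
The plan is to invoke Bernstein's theorem (Theorem~\ref{thm:bernstein}) directly. Since $f_{\sigma,\mu}(z) = e^{-(z-\mu)^2/\sigma^2}$ is an entire function of $z$, it is analytic inside every ellipse $\mathcal{E}_\chi$ with $\chi > 1$, so the hypothesis is automatic. All that remains is to bound $M(\chi)$ and to make a judicious choice of $\chi$ in terms of the free parameter $\alpha$.

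First I would bound $M(\chi)$. Writing $z = x + iy$, a direct expansion gives $\operatorname{Re}((z-\mu)^2) = (x-\mu)^2 - y^2$, so
\begin{equation*}
  \abs{f_{\sigma,\mu}(z)} = e^{-\operatorname{Re}((z-\mu)^2)/\sigma^2} = e^{(y^2 - (x-\mu)^2)/\sigma^2} \le e^{y^2/\sigma^2}.
\end{equation*}
On the ellipse $\mathcal{E}_\chi$ the imaginary part satisfies $\abs{y} \le b = (\chi^2-1)/(2\chi)$, hence $M(\chi) \le e^{b^2/\sigma^2}$.

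Next I would tune $\chi$ so that the bound on $M(\chi)$ matches the prefactor $e^{\alpha^2}$ in the target inequality. Setting $b = \alpha\sigma$ gives exactly $M(\chi) \le e^{\alpha^2}$. The relation $b = (\chi^2-1)/(2\chi)$ then determines $\chi$ as the positive root of $\chi^2 - 2\alpha\sigma\,\chi - 1 = 0$, namely $\chi = \alpha\sigma + \sqrt{\alpha^2\sigma^2 + 1}$. Since $\sqrt{\alpha^2\sigma^2+1} \ge 1$, I get the two elementary estimates $\chi \ge 1 + \alpha\sigma$ and $\chi - 1 \ge \alpha\sigma$, which are exactly what is needed to convert the Bernstein bound into the stated form.

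Substituting into \eqref{eqn:bernstein} yields
\begin{equation*}
  E_k(f_{\sigma,\mu}) \le \frac{2 M(\chi)}{\chi^k(\chi - 1)} \le \frac{2 e^{\alpha^2}}{(1+\alpha\sigma)^k \, \alpha\sigma},
\end{equation*}
which is \eqref{eqn:approxGaussian}. I do not anticipate any real obstacle here: the Gaussian is entire, so there is no need to track a region of analyticity, and the only minor subtlety is the bookkeeping that turns the implicit formula $\chi = \alpha\sigma + \sqrt{\alpha^2\sigma^2+1}$ into the clean factor $(1+\alpha\sigma)^{-k}$ and the clean prefactor $1/(\alpha\sigma)$. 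The parameter $\alpha$ is kept free so that in later use one can optimize it (for fixed $k\sigma$) to sharpen the exponential decay rate.
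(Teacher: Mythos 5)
Your proposal is correct and follows essentially the same route as the paper: bound $\abs{f_{\sigma,\mu}(x+iy)}\le e^{y^2/\sigma^2}$ on $\mathcal{E}_\chi$, tie the ellipse to $\alpha\sigma$, and feed the result into Bernstein's theorem. The only difference is bookkeeping—the paper fixes $\chi=1+\alpha\sigma$ and bounds the half-axis $b\le\alpha\sigma$, while you fix $b=\alpha\sigma$ and bound $\chi\ge 1+\alpha\sigma$, $\chi-1\ge\alpha\sigma$—which yields the identical estimate.
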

\begin{proof}
  For any $\mu\in (-1,1), \sigma>0$, the Gaussian function
  $f_{\sigma,\mu}$ is analytic in any ellipse $\mathcal{E}_{\chi}$ with
  $\chi>1$, then
  \[
  M(\chi)=\sup_{z\equiv x+iy\in \mathcal{E}_{\chi}}\abs{f_{\sigma,\mu}(x+iy)}  \le 
  \sup_{z\equiv x+iy\in \mathcal{E}_{\chi}} 
  e^{\frac{y^2}{\sigma^2}}\le 
  e^{\frac{(\chi-\frac{1}{\chi})^2}{4\sigma^2}}.
  \]
  For any $\alpha>0$, let 
  \begin{equation}
    \chi=1+\alpha \sigma
    \label{eqn:chichoice}
  \end{equation}
  then
  $\chi-\frac{1}{\chi} \le 2\alpha \sigma$, and
  \begin{equation}
    M(1+\alpha \sigma) \le e^{\alpha^2}.
    \label{eqn:Mbound}
  \end{equation}
  Using Theorem~\ref{thm:bernstein},
  Eq.~\eqref{eqn:approxGaussian} is the direct consequence of
  Eq.~\eqref{eqn:Mbound} and the choice of $\chi$ in
  Eq.~\eqref{eqn:chichoice}.
\end{proof}

For the graph $G=(\mc{V},\mc{E})$ associated with the matrix $A$ and
vertices $i,j\in\mc{V}$, a path linking $i,j$ is given by a sequence of
edges $p=\{(i_{0}\equiv
i,i_{1}),(i_{1},i_{2}),\ldots,(i_{l},i_{l+1}\equiv j)\}$ where
$i_{1},\ldots,i_{l}\in \mc{V}$, and each element in $p$ is an edge
in $\mc{E}$.  The length of the path $p$ is defined to be $l+1$.
If $p=\{(i,j)\}$ then the length of $p$ is $1$.  
The \textit{geodesic distance} $d(i,j)$ between vertices $i$ and $j$ is
defined as
the length of the shortest path between $i$ and $j$. 
It should be noted that for
structurally symmetric matrices, i.e. $A_{ij}\ne 0$ implies
$A_{ji}\ne 0$ for all indices $i,j$, the geodesic distance is symmetric,
i.e. $d(i,j)=d(j,i)$.  In particular, Hermitian matrices are
structurally symmetric.
If $d(i,j)>1$, then $A_{ij}=0$.  
If $d(i,j)=\infty$ then there is no path connecting $i$ and $j$.  
\REV{More generally, for any
positive integer $k$, if $d(i,j)>k$ then $(A^{k})_{ij}=0$, where $A^{k}$
is the $k$-th power of the matrix $A$.}

%

The precise statement of the spatial locality of the matrix function
$f_{\sigma,\mu}(A)$ is given by the decay properties along the
off-diagonal direction in Theorem~\ref{thm:decaygauss}. For a given
column $j$, the magnitude of each element $f_{\sigma,\mu}(A)_{i,j}$
decays exponentially with respect to the geodesic distance $d(i,j)$.

\begin{theorem}
  Let $A$ be a sparse and Hermitian matrix with all eigenvalues
  contained in the interval $(-1,1)$.  For any $\alpha>0,\sigma>0$, let
  \begin{equation}
    \rho=(1+\alpha\sigma)^{-1},\quad 
    K=\frac{2}{\rho\alpha\sigma} e^{\alpha^2}, 
    \label{eqn:Krho}
  \end{equation}
  then for all $d(i,j)\ge 1, i,j=1,\cdots,n$,
  \begin{equation}
    \abs{f_{\sigma,\mu}(A)_{ij}} \le K \rho^{d(i,j)}, 
    \label{eqn:decaygauss}
  \end{equation}
  where $d(i,j)$ is the geodesic distance between vertices $i$ and
  $j$.
  \label{thm:decaygauss}
\end{theorem}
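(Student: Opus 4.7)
The plan is to bound the $(i,j)$ entry of $f_{\sigma,\mu}(A)$ by the uniform best-polynomial-approximation error from Theorem~\ref{thm:approxGaussian}, exploiting the fact that low-degree polynomials in $A$ inherit the sparsity pattern of powers of $A$.

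First I would fix indices $i,j$ with $d(i,j)\ge 1$ and set $k = d(i,j)-1$. For any $p\in\mathbb{P}_{k}$, the matrix $p(A)$ is a linear combination of $A^{0},A^{1},\ldots,A^{k}$, and by the observation recalled just before the theorem, $(A^{m})_{ij}=0$ for every $m<d(i,j)$, hence $(p(A))_{ij}=0$. Therefore
\[
f_{\sigma,\mu}(A)_{ij} \;=\; \bigl(f_{\sigma,\mu}(A)-p(A)\bigr)_{ij}
\]
for every $p\in\mathbb{P}_{k}$.

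Next I would pass to a spectral bound on the right-hand side. Since $A$ is Hermitian with eigenvalues in $(-1,1)$, the spectral theorem gives
\[
\bigl\lvert\bigl(f_{\sigma,\mu}(A)-p(A)\bigr)_{ij}\bigr\rvert
\;\le\; \bigl\lVert f_{\sigma,\mu}(A)-p(A)\bigr\rVert_{2}
\;=\; \max_{1\le l\le n}\bigl\lvert f_{\sigma,\mu}(\lambda_{l})-p(\lambda_{l})\bigr\rvert
\;\le\; \lVert f_{\sigma,\mu}-p\rVert_{\infty},
\]
where the last norm is taken over $[-1,1]$. Minimizing over $p\in\mathbb{P}_{k}$ yields
\[
\bigl\lvert f_{\sigma,\mu}(A)_{ij}\bigr\rvert \;\le\; E_{k}(f_{\sigma,\mu}).
\]

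Finally I would invoke Theorem~\ref{thm:approxGaussian} with the given $\alpha>0$: using $k=d(i,j)-1$ and writing $\rho=(1+\alpha\sigma)^{-1}$,
\[
E_{d(i,j)-1}(f_{\sigma,\mu}) \;\le\; \frac{2}{\alpha\sigma}\,e^{\alpha^{2}}(1+\alpha\sigma)^{-(d(i,j)-1)}
\;=\; \frac{2(1+\alpha\sigma)}{\alpha\sigma}\,e^{\alpha^{2}}\rho^{d(i,j)}
\;=\; K\rho^{d(i,j)},
\]
which is exactly \eqref{eqn:decaygauss}. There is no serious obstacle: the only subtle point is the sparsity propagation $(A^{m})_{ij}=0$ for $m<d(i,j)$, which is already stated in the preliminaries, and the rest is spectral-theorem bookkeeping combined with Theorem~\ref{thm:approxGaussian}.
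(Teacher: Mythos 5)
Your proposal is correct and follows essentially the same route as the paper's proof: exploit that degree-$k$ polynomials in $A$ vanish at entries with $d(i,j)>k$, bound the entry by $\norm{f_{\sigma,\mu}(A)-p(A)}_{2}\le\norm{f_{\sigma,\mu}-p}_{\infty}$ via the spectral theorem, and invoke Theorem~\ref{thm:approxGaussian} with $k=d(i,j)-1$. The only cosmetic difference is that you minimize over all $p\in\mathbb{P}_{k}$ to get $E_{k}(f_{\sigma,\mu})$ directly, whereas the paper fixes a near-best polynomial $p_{k}$; the constants work out identically.
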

\begin{proof}
  For any integer $k\ge 0$, there exists a 
  polynomial $p_{k}\in \mathbb{P}_{k}$ such that
  \[
  \norm{f_{\sigma,\mu}(A)-p_{k}(A)}_{2} =
  \norm{f_{\sigma,\mu}-p_{k}}_{\infty} = E_{k}(f_{\sigma,\mu}) \le K
  \rho^{k+1}.
  \]
  \REV{The last inequality follows from Theorem~\ref{thm:approxGaussian}.}
  Now consider all edges $(i,j)$ such that the geodesic distance
  $d(i,j)=k+1$, and then $p_{k}(A)_{ij}=0$.  Therefore
  \[
  \abs{f_{\sigma,\mu}(A)_{ij}} =
  \abs{f_{\sigma,\mu}(A)_{ij}-p_{k}(A)_{ij}} \le
  \norm{f_{\sigma,\mu}(A)-p_{k}(A)}_{2} \le  K \rho^{k+1} = K
  \rho^{d(i,j)}.
  \]
\end{proof}


\begin{remark}
  As suggested in Eq.~\eqref{eqn:Krho}, $\rho,K$ only depend on
  $\sigma$ but not on $\mu$. Therefore the decay properties of the
  matrix function $\abs{f_{\sigma,\mu}(A)}$ seem to be independent of the
  shift $\mu$.  This is because an upper bound for $M(\chi)$ is given in
  Theorem~\ref{thm:approxGaussian} that is valid for all $\mu$.
  Numerical results in section~\ref{sec:numer} indicate that the
  preconstant of the exponential decay may have a strong dependency on
  $\mu$, \REV{and such dependency can be specific to the matrix under
  study.}
\end{remark}

\begin{remark}
  In Theorem~\ref{thm:decaygauss} there is an arbitrary positive
  constant $\alpha$.
  For any given $\alpha>0$, the
  off-diagonal entries of $\abs{f_{\sigma,\mu}(A)_{ij}}$ should decay
  exponentially with respect to the geodesic distance.  By optimizing
  $\alpha$ together with the degree of the Chebyshev polynomial
  $k$, the actual decay rate can be slightly faster than exponential.
  Fig.~\ref{fig:superexpdecay} gives an example of the magnitude of the first column 
  $\abs{f_{\sigma,\mu}(A)_{:,1}}$ where $A$ is a discretized Laplacian
  operator in 1D with periodic boundary conditions, with
  $\sigma=1.0,\mu=2.0$ and $\sigma=1.0,\mu=10.0$ respectively.
  \REV{Although the discretized 1D Laplacian matrix is a banded matrix, 
  all its eigenfunctions are plane waves which are fully delocalized
  in the global domain. Nonetheless the upper bound of the decay rate of
  the LSS operator is clearly exponential as shown in 
  Fig.~\ref{fig:superexpdecay}.}
  \begin{figure}[h]
    \begin{center}
      \includegraphics[width=0.4\textwidth]{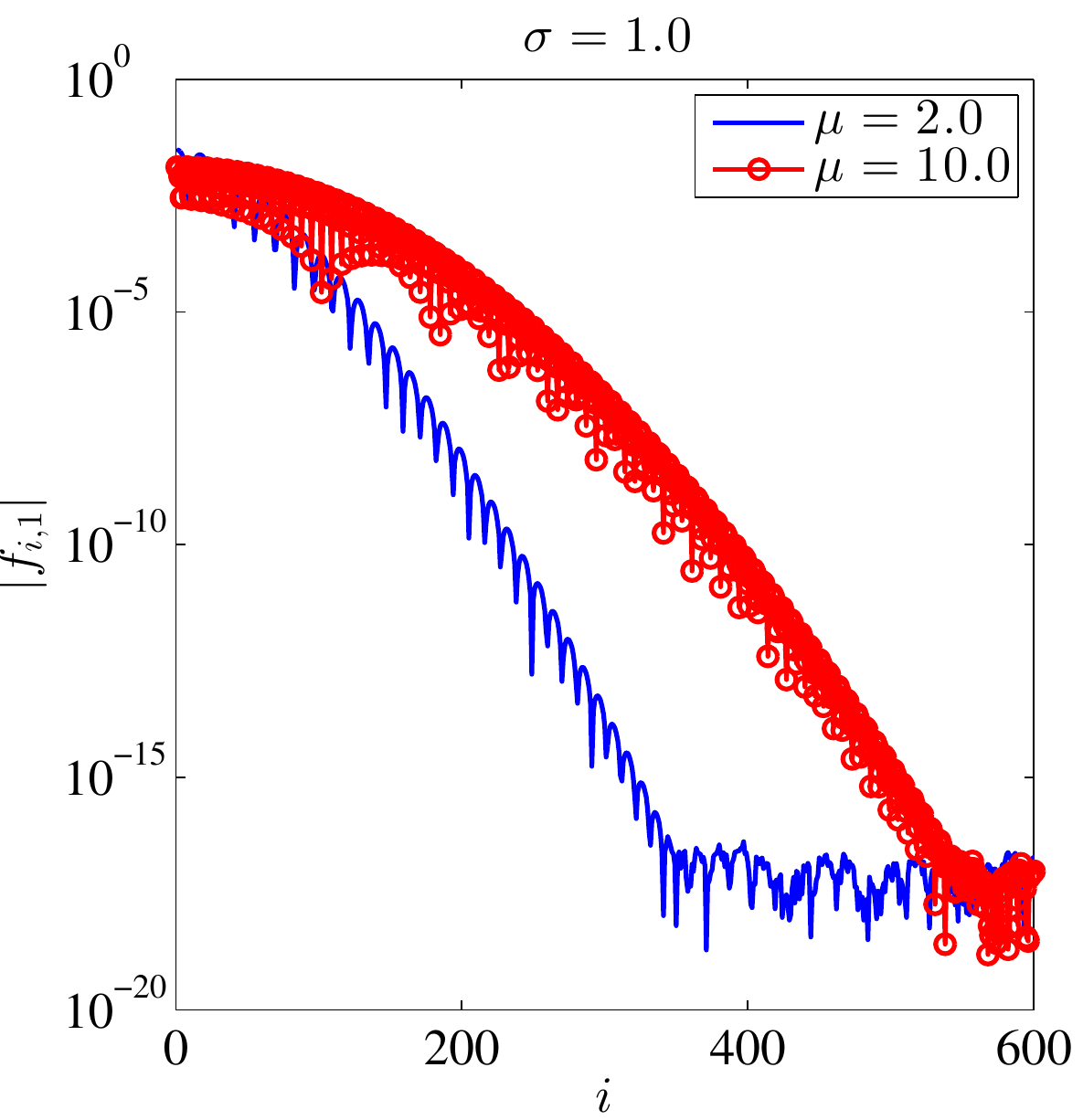}
    \end{center}
    \caption{Log-scale plot of the magnitude of the first column
    $\abs{f_{\sigma,\mu}(A)_{:,1}}$. $A$ is a discretized Laplacian
    operator in 1D with periodic boundary conditions, with $\sigma=1.0$
    and $\mu=2.0,1.0$, respectively.}
    \label{fig:superexpdecay}
  \end{figure}
\end{remark}

\begin{remark}
\REV{In order to limit the numerical rank of 
$f_{\sigma,\mu}(A)$ in practice, it is desirable to use a small $\sigma$.
With fixed $\alpha$ and assume $\alpha\sigma < 1$, we have
\[
\rho^{d(i,j)} = (1+\alpha\sigma)^{-d(i,j)} \le e^{-\frac12 \alpha\sigma
d(i,j)}.
\]
}
Here $\sigma$ reflects the spectral locality, and
$d(i,j)$ reflects the spatial locality, \REV{which} reveals the balance
between the spectral and spatial locality, tuned by one parameter
$\sigma$.
\end{remark}


\section{Localized spectrum slicing}\label{sec:lss}

\subsection{Algorithm}


Using the decay properties of the LSS operator $f_{\sigma,\mu}(A)$ in
Theorem~\ref{thm:decaygauss}, a set of basis functions called the LSS
basis set can be constructed in a divide-and-conquer fashion.
Below we demonstrate that  if the smearing parameter $\sigma$ is large
enough, then the localized spectrum slicing operator $f_{\sigma,\mu(A)}$
can be approximately computed using submatrices of $A$. The size of each
submatrix is independent of the size of $A$.
This is important for reducing the computational complexity and for
parallel computation. 


\begin{proposition}
  Let $A$, $B$ be $n\times n$ Hermitian matrices.  The graph
  induced by $B$ is a spanning subgraph of the graph $G$ induced by $A$, and the
  geodesic distance $d(i,j)$ is defined using the graph $G$. We assume
  for a given integer $j,m$ ($1\le j,m\le n)$, 
  \[
  A_{il} = B_{il}, \quad \forall i,l  \quad \mathrm{s.t.}
  \quad d(j,i)\le m, \quad d(j,l) \le m .
  \]
  Then for any integer $k$ ($1\le k\le m$), 
  \[
  (A^{k})_{il} = (B^{k})_{il}, \quad \forall i,l \quad \mathrm{s.t.}
  \quad d(j,i)\le m-k+1, \quad d(j,l)
  \le m-k+1.
  \]
  \label{prop:product}
\end{proposition}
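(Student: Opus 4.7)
The plan is to prove the claim by induction on $k$, using the multiplicative structure of matrix powers together with the triangle inequality for the geodesic distance $d$.

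The base case $k=1$ is immediate: the hypothesis says $A_{il}=B_{il}$ whenever $d(j,i)\le m$ and $d(j,l)\le m$, which includes the target region $d(j,i)\le m$, $d(j,l)\le m$. For the inductive step, suppose the statement holds for some $k$ with $1\le k\le m-1$, and fix indices $i,l$ with $d(j,i)\le m-k$ and $d(j,l)\le m-k$. Write
\[
(A^{k+1})_{il} = \sum_{r=1}^{n} A_{ir} (A^{k})_{rl},\qquad (B^{k+1})_{il}=\sum_{r=1}^{n} B_{ir}(B^{k})_{rl},
\]
and show the two sums agree term by term.

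First I would restrict attention to those $r$ that can contribute nonzero terms. If $A_{ir}\ne 0$ then $(i,r)$ is an edge of $G$, so $d(i,r)\le 1$, and by the triangle inequality $d(j,r)\le d(j,i)+1 \le m-k+1$. The same bound holds for any $r$ with $B_{ir}\ne 0$, since $B$'s graph is a spanning subgraph of $G$. Next, for each such $r$ I would apply the inductive hypothesis at level $k$: since $d(j,r)\le m-k+1$ and $d(j,l)\le m-k\le m-k+1$, we get $(A^{k})_{rl}=(B^{k})_{rl}$. Finally, for the factor $A_{ir}$ versus $B_{ir}$, since $d(j,i)\le m-k\le m$ and $d(j,r)\le m-k+1\le m$, the hypothesis on $A,B$ directly yields $A_{ir}=B_{ir}$. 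Combining these two equalities makes the corresponding summands coincide, and the sums match.

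The only subtlety is to be careful that the sets of contributing indices $r$ on the two sides are really matched: an $r$ with $A_{ir}\ne 0$ but $B_{ir}=0$ (or vice versa) must not spoil the equality. This is handled by the last observation above: whenever $r$ falls in the geodesic ball that could contribute, the hypothesis forces $A_{ir}=B_{ir}$, so nonzero structures coincide on that ball. I expect this bookkeeping step to be the main (though still mild) obstacle; everything else is a direct application of the triangle inequality on $G$ and of the inductive hypothesis, and the induction terminates correctly when $k=m$, giving the stated range $d(j,i),d(j,l)\le m-k+1$.
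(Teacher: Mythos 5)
Your proposal is correct and follows essentially the same argument as the paper: induction on the power $k$, expanding $(A^{k+1})_{il}=\sum_r A_{ir}(A^k)_{rl}$, restricting to contributing indices $r$ via the triangle inequality $d(j,r)\le d(j,i)+1$, and then invoking the hypothesis $A_{ir}=B_{ir}$ inside the radius-$m$ ball together with the inductive hypothesis for $(A^k)_{rl}=(B^k)_{rl}$. Your explicit handling of the possible mismatch of nonzero index sets (using that $B$'s graph is a spanning subgraph of $G$) is exactly the bookkeeping the paper also performs, just phrased with the induction running from $k$ to $k+1$ instead of $k-1$ to $k$.
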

\begin{proof}
  The statement is apparently correct for $k=1$.  Assume
  the statement for $k-1$ is proved, \REV{and we prove the statement is
  true for $k$. First}
  \[
  (A^{k})_{il} = \sum_{p} A_{ip} (A^{k-1})_{pl},\quad
  (B^{k})_{il} = \sum_{p} B_{ip} (B^{k-1})_{pl}.
  \]
  In the summation above, $A_{ip} (A^{k-1})_{pl}$ is nonzero only if
  $A_{ip}\ne 0$. Similarly $B_{ip} (B^{k-1})_{pl}$ is nonzero only if
  $B_{ip}\ne 0$.
  Since the graph induced by $B$ is a subgraph of the graph induced by
  $A$, $B_{ip}\ne 0$ implies $A_{ip}\ne 0$, and therefore we only need
  to consider $p$ such that $A_{ip}\ne 0$, i.e. $d(i,p)=1$.
  Consider $i$ such that
  $d(j,i)\le m-k+1$, then $p$
  satisfies
  \[
  d(j,p)\le d(j,i)+d(i,p)=m-k+2 = m-(k-1)+1.
  \]
  Also \REV{for any $l$ such that $d(j,l)\le m-k+1 <
  m-(k-1)+1$, by the assumption that the statement for $k-1$ is proved,
  $(A^{k-1})_{pl} = (B^{k-1})_{pl}$.    
  Together with $d(j,i)\le m, d(j,p)\le m$, we have
  $A_{ip}=B_{ip}$.}
  Therefore
  \[
  (A^{k})_{il} = \sum_{p} A_{ip} (A^{k-1})_{pl} = \sum_{p} B_{ip} (B^{k-1})_{pl} = (B^{k})_{il}
  \]
  is valid for all $i,l$ such that  $d(j,i)\le m-k+1, d(j,l) \le m-k+1$.
\end{proof}

Using Proposition~\ref{prop:product}, Theorem~\ref{thm:truncate} shows
that the $j$-th column of
$f_{\sigma,\mu}(A)$ can be accurately computed from $f_{\sigma,\mu}(B)$, as long as $A$ and $B$
are sufficiently close in the vicinity of $j$ in the sense  of
geodesic distance.

\begin{theorem}
  Let $A$, $B$ be $n\times n$ Hermitian matrices with eigenvalues in
  $(-1,1)$.  For a given
  $j$ and an even integer $m$ ($1\le j,m\le n$), 
  \[
  A_{il} = B_{il}, \quad \forall i,l \quad \mathrm{s.t.} \quad d(j,i)\le
  m, \quad d(j,l) \le m .
  \]
  Then 
  \[
  \abs{f_{\sigma,\mu}(A)_{ij}-f_{\sigma,\mu}(B)_{ij}} \le  2K
  \rho^{\frac{m}{2}+1},
  \]
  for all $i$ such that $d(j,i)\le m/2+1$, where the constants
  $K,\rho$ are given in Eq.~\eqref{eqn:Krho}.
  \label{thm:truncate}
\end{theorem}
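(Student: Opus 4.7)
The plan is to combine polynomial approximation in the spectral domain (from Theorem~\ref{thm:approxGaussian}) with the matrix-power locality given by Proposition~\ref{prop:product}, using a standard triangle-inequality split: $f_{\sigma,\mu}(A)_{ij} - f_{\sigma,\mu}(B)_{ij}$ is approximated by $p_k(A)_{ij} - p_k(B)_{ij}$ for a well-chosen polynomial $p_k$, and then that middle difference is shown to vanish by locality.

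First I would invoke, as in the proof of Theorem~\ref{thm:decaygauss}, a polynomial $p_k \in \mathbb{P}_k$ with $\|f_{\sigma,\mu} - p_k\|_\infty = E_k(f_{\sigma,\mu}) \le K\rho^{k+1}$. Since $A$ and $B$ are Hermitian with spectra in $(-1,1)$, this yields $\|f_{\sigma,\mu}(A) - p_k(A)\|_2 \le K\rho^{k+1}$ and the same bound for $B$, so in particular each entry differs from the corresponding entry of $p_k(A)$ (resp.\ $p_k(B)$) by at most $K\rho^{k+1}$.

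Next I would apply the triangle inequality:
\[
\abs{f_{\sigma,\mu}(A)_{ij} - f_{\sigma,\mu}(B)_{ij}}
\le \abs{f_{\sigma,\mu}(A)_{ij} - p_k(A)_{ij}}
+ \abs{p_k(A)_{ij} - p_k(B)_{ij}}
+ \abs{p_k(B)_{ij} - f_{\sigma,\mu}(B)_{ij}}.
\]
The outer two terms contribute at most $2K\rho^{k+1}$ each together. The remaining task is to kill the middle term by choosing $k$ so that $p_k(A)_{ij} = p_k(B)_{ij}$. Proposition~\ref{prop:product}, applied at column index $j$ and at each power $s \le k$, gives $(A^s)_{il} = (B^s)_{il}$ whenever $d(j,i), d(j,l) \le m - s + 1$; specializing to $l = j$ (so $d(j,l) = 0$) and to the worst case $s = k$, the condition reduces to $d(j,i) \le m - k + 1$. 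Choosing the balanced value $k = m/2$ (legal because $m$ is even), this condition becomes exactly $d(j,i) \le m/2 + 1$, which is our hypothesis, and it implies $(A^s)_{ij} = (B^s)_{ij}$ for every $0 \le s \le m/2$ and hence $p_k(A)_{ij} = p_k(B)_{ij}$.

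Putting the two bounds together gives $\abs{f_{\sigma,\mu}(A)_{ij} - f_{\sigma,\mu}(B)_{ij}} \le 2K\rho^{k+1} = 2K\rho^{m/2+1}$, as claimed. The only subtlety, and the reason the statement restricts to $d(j,i) \le m/2 + 1$ and to even $m$, is this balancing: the Proposition~\ref{prop:product} budget $m - k + 1$ must simultaneously cover the polynomial degree (to ensure the middle term vanishes) and the target geodesic distance (to ensure the entry is in range), so the tightest version is to split the radius $m$ evenly between degree and distance. No further obstacle arises; the argument is essentially a locality-truncation plus Chebyshev-type approximation bookkeeping.
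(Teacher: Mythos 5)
Your proposal is correct and follows essentially the same route as the paper's own proof: the same triangle-inequality split through $p_k(A)_{ij}-p_k(B)_{ij}$, the same choice $k=m/2$, the Bernstein-type bound $E_k(f_{\sigma,\mu})\le K\rho^{k+1}$ for the two outer terms, and Proposition~\ref{prop:product} (specialized to $l=j$) to make the middle term vanish for $d(j,i)\le m/2+1$. No gaps; the bookkeeping matches the paper exactly.
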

\begin{proof}
  \REV{For any $i,j$ and $k\ge 0$ we have
  \[
  \abs{f_{\sigma,\mu}(A)_{ij}-f_{\sigma,\mu}(B)_{ij}} \le 
  \abs{f_{\sigma,\mu}(A)_{ij} - p_{k}(A)_{ij}} + 
  \abs{p_{k}(A)_{ij} - p_{k}(B)_{ij}}+ 
  \abs{f_{\sigma,\mu}(B)_{ij} - p_{k}(B)_{ij}}.
  \]
  Take $k=\frac{m}{2}$.  For any $i$ such $d(j,i)\le m-k+1 =
  \frac{m}{2}+1$, by
  Proposition~\ref{prop:product}, $p_{k}(A)_{ij} = p_{k}(B)_{ij}$. Also from
  Theorem~\ref{thm:approxGaussian}, we have
  \[
  \abs{f_{\sigma,\mu}(A)_{ij} - p_{k}(A)_{ij}} \le \norm{f_{\sigma,\mu}(A)-p_{k}(A)}_{2}
  \le K \rho^{\frac{m}{2}+1},
  \]
  \[
  \abs{f_{\sigma,\mu}(B)_{ij} - p_{k}(B)_{ij}} \le \norm{f_{\sigma,\mu}(B)-p_{k}(B)}_{2}
  \le K \rho^{\frac{m}{2}+1},
  \]
  and hence the result.
  }
\end{proof}

%

Theorem~\ref{thm:truncate} shows that in order to compute any column
$j$ of the matrix $f_{\sigma,\mu}(A)$ up to certain accuracy, it is
only necessary to have a matrix that is the same as $A$ up to a certain
distance away from $j$.  Together with the decay property of each
column of $f_{\sigma,\mu}(A)$, this allows the $j$-th column of
$f_{\sigma,\mu}(A)$ to be constructed in a \textit{divide and conquer}
manner.  For instance, for a given integer $m$ we can define 
\begin{equation}
B_{il} = \begin{cases}
  A_{il}, &\forall i,l \quad \mathrm{s.t.} \quad d(j,i)\le m, d(j,l) \le
  m,\\
  0, & \mathrm{otherwise}
\end{cases}.
\label{eqn:dirichletmodification}
\end{equation}
which is simply a submatrix of $A$. As a submatrix, $\norm{B}_{2}\le
\norm{A}_{2}$ and the assumption of the spectral radius in
Theorem~\ref{thm:truncate} is satisfied.

In practice it would be very time consuming to construct an approximate
matrix for each column of $j$, \REV{since the rank of the LSS operator
$f_{\sigma,\mu}(A)$ is often much less compared to $n$. For structured
matrices such as matrices obtained from finite difference or finite
element discretization of PDE operators, it is often possible to
partition the domain into well structured disjoint columns sets, and
apply the truncated matrix to each column set. The cost for generating
such partition can be very small if the structure of the matrix is known
\textit{a priori}. For the discussion below, we assume that the
partition $\mc{V}=\{1,\ldots,n\}$ into $M$ simply
connected disjoint sets $\{E_{\kappa}\}_{\kappa=1}^{M}$ is given, i.e.
\[
\mc{V} = \bigcup_{\kappa=1}^{M}  E_{\kappa}, \quad \mbox{and}\quad
E_{\kappa}\bigcap
E_{\kappa'} = \emptyset,\quad \kappa\ne \kappa'.
\]
For general sparse matrices, such partition may not be readily
available. We discuss the choice of domain partitioning strategy in
section~\ref{subsec:lssgeneral}.}

For each $E_{\kappa}$ and an integer $m$, we
define an associated set
\begin{equation}
  Q_{\kappa} = \left\{ i \vert d(i,j)\le m, \forall j\in E_{\kappa}
  \right\}.
  \label{eqn:Qkappa}
\end{equation}
Theorem~\ref{thm:truncate} implies that the
submatrix $(f_{\sigma,\mu}(A))_{:,E_{\kappa}}$ can be constructed by
a submatrix of $A$ defined as 
\begin{equation}
  (A_{\kappa})_{ij} = \begin{cases}
    A_{ij}, & i,j\in Q_{\kappa},\\
    0,& \mathrm{otherwise}.
  \end{cases}
  \label{eqn:Akappa}
\end{equation}
In the following discussion, we refer to $E_{\kappa}$ as an
\textit{element}, and to $Q_{\kappa}$ as an \textit{extended element}
associated with $E_{\kappa}$. \REV{It should be noted that the zero
entries of $A_{\kappa}$ outside the index set $Q_{\kappa}$ do not need
to be explicitly stored.}

\begin{remark}
  The choice in Eq.~\eqref{eqn:dirichletmodification} takes a submatrix
  of $A$ to compute the localized spectrum slicing operator.  From the
  point of view of partial differential operators, this is similar to
  imposing zero Dirichlet boundary condition on some local domains.
\end{remark}

Since $A$ is Hermitian and sparse, and so is $A_{\kappa}$, and the
latter has the
eigen-decomposition
\begin{equation}
  A_{\kappa} X_{\kappa} = X_{\kappa} D_{\kappa}.
  \label{eqn:eigenAkappa}
\end{equation}
Here $D_{\kappa}$ is a diagonal matrix.
Note that $A_{\kappa}$ only takes nonzero values on the extended
element $Q_{\kappa}$. \REV{The entries of each column of $X_{\kappa}$
outside the index set $Q_{\kappa}$ can be set to zero, and such zero
entries do not need to be explicitly stored. This is equivalent to
solving an eigenvalue problem of size $|Q_{\kappa}|\times |Q_{\kappa}|$.  }
%
Define
\begin{equation}
  f_{\sigma,\mu}(A_{\kappa}) \equiv X_{\kappa}
  f_{\sigma,\mu}(D_{\kappa})
  X_{\kappa}^{*}.
  \label{eqn:fAkappa}
\end{equation}
Using Theorem~\ref{thm:truncate}, $f_{\sigma,\mu}(A)_{Q_{\kappa},E_{\kappa}}$ can be approximated by
$f_{\sigma,\mu}(A_{\kappa})$, in the sense that
\[
\abs{f_{\sigma,\mu}(A)_{ij}-f_{\sigma,\mu}(A_{\kappa})_{ij}} \le
2K\rho^{\frac{m}{2}+1}, \quad \forall i\in Q_{\kappa},\quad j\in E_{\kappa}.
\]
\REV{Since $f_{\sigma,\mu}$ is spectrally localized,}
in practice not all
eigenvalues and eigenvectors of $A_{\kappa}$ as
in~\eqref{eqn:eigenAkappa} are needed. Instead only a \textit{partial
eigen-decomposition} is needed to compute all eigenvalues of
$A_{\kappa}$ in the interval $(\mu-c\sigma,\mu+c\sigma)$.  Due to the
fast decay properties of Gaussian functions, in practice $c$ can be
chosen to be $2\sim 4$ to be sufficiently accurate.  \REV{We denote by
$s_{\kappa}$ the column dimension of $X_{\kappa}$ in the partial
eigen-decomposition of $A_{\kappa}$.}

The factorized representation in Eq.~\eqref{eqn:fAkappa} also allows the
computation of a set of vectors approximately spanning the column space
of $f_{\sigma,\mu}(A_{\kappa})$, through a local singular value
decomposition (SVD) procedure, i.e.
\begin{equation}
  \norm{f_{\sigma,\mu}(D_{\kappa})
  ((X_{\kappa})_{Q_{\kappa},:})^{*} -
  \wt{U}_{\kappa} \wt{S}_{\kappa} \wt{V}_{\kappa}^{*}}_{2} \le
  \wt{\tau}.
  \label{eqn:localSVD}
\end{equation}
Here $\wt{\tau}$ is SVD truncation criterion. \REV{The size of the
matrix for the SVD
decomposition is $s_{\kappa} \times |Q_{\kappa}|$.}
In practice $\wt{\tau}$
may also be chosen using a relative criterion as $\wt{\tau}=\tau
(\wt{S}_{\kappa})_{1,1}$ in used in our numerical experiment, where we
assume $(\wt{S}_{\kappa})_{1,1}$ is the largest singular value in
Eq.~\eqref{eqn:localSVD}.
In practice this can
be performed by only keeping the singular values in the diagonal matrix
$\wt{S}_{\kappa}$ that are larger than $\wt{\tau}$.  Then we can define
\begin{equation}
  U_{\kappa} = X_{\kappa} \wt{U}_{\kappa}, \quad V_{\kappa} =
  \wt{S}_{\kappa} \wt{V}_{\kappa}^{*}.
  \label{eqn:UV}
\end{equation}
We combine all $U_{\kappa}$ together
\begin{equation}
  U\equiv [U_{1},\ldots,U_{M}],
  \label{eqn:LSSbasis}
\end{equation}
and $U$ is the LSS basis set that is both spectrally localized and
spatially localized.  We denote by $n_{b}$ the total number of
columns of $U$, which is also referred to as the size of the LSS basis
set.  Using the LSS basis set, an approximation to the LSS operator is
defined as
\begin{equation}
  \wt{f}_{ij} = \begin{cases}
    (U_{\kappa})_{i,:}(V_{\kappa})_{:,j},& i\in Q_{\kappa},j\in
    E_{\kappa}, \quad \mbox{for some~} \kappa,\\
    0,&i\notin Q_{\kappa},j\in E_{\kappa},\quad \mbox{otherwise}.
  \end{cases}
  \label{eqn:approxLSS}
\end{equation}
$\wt{f}$ is an $n\times n$ sparse matrix, and the error in the max norm for
approximating the LSS operator $f_{\sigma,\mu}(A)$ is given in
Theorem~\ref{thm:lss}.

\begin{theorem} 
  Let $A$ be an $n\times n$ Hermitian matrix with eigenvalues in
  $(-1,1)$, and the induced graph is
  partitioned into $M$ elements $\{E^{\kappa}\}$. For each element
  $E_{\kappa}$, there is an extended element $Q_{\kappa}$ given
  in~\eqref{eqn:Qkappa}, a submatrix $A_{\kappa}$ given in
  ~\eqref{eqn:Akappa},
  and matrices $U_{\kappa},V_{\kappa}$ satisfying
  \eqref{eqn:localSVD} and \eqref{eqn:UV}. Let $\wt{f}$ be an
  $n\times n$ matrix defined in Eq.~\eqref{eqn:approxLSS}, 
  then 
  \begin{equation}
    \norm{f_{\sigma,\mu}(A) - \wt{f}}_{\max} \le
    2K\rho^{\frac{m}{2}+1}+\wt{\tau}.  
    \label{eqn:maxerror}
  \end{equation}
  \label{thm:lss}
\end{theorem}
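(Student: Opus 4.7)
The proof reduces to a uniform entrywise bound: we must show that $|f_{\sigma,\mu}(A)_{ij}-\wt{f}_{ij}| \le 2K\rho^{m/2+1}+\wt{\tau}$ for every pair $(i,j)$. Fix such a pair and let $\kappa$ be the unique index with $j\in E_{\kappa}$ (which exists because $\{E_{\kappa}\}$ partitions $\mc{V}$). I would split the argument according to whether $i\in Q_{\kappa}$ or $i\notin Q_{\kappa}$, since these are precisely the two branches in the definition~\eqref{eqn:approxLSS} of $\wt{f}$.

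For the case $i\in Q_{\kappa}$, the plan is to insert $f_{\sigma,\mu}(A_{\kappa})_{ij}$ as an intermediate quantity and apply the triangle inequality. Since $E_{\kappa}\subset Q_{\kappa}$ and $Q_{\kappa}$ contains every vertex within geodesic distance $m$ of each $j\in E_{\kappa}$, the construction~\eqref{eqn:Akappa} forces $A_{il}=(A_{\kappa})_{il}$ for all $i,l$ with $d(j,i)\le m,\ d(j,l)\le m$. This is exactly the hypothesis of Theorem~\ref{thm:truncate} with $B=A_{\kappa}$, so the first piece is bounded by $2K\rho^{m/2+1}$. For the second piece, I would write
\[
f_{\sigma,\mu}(A_{\kappa})_{ij}-(U_{\kappa}V_{\kappa})_{ij}
=(X_{\kappa})_{i,:}\bigl[f_{\sigma,\mu}(D_{\kappa})((X_{\kappa})_{Q_{\kappa},:})^{*}-\wt{U}_{\kappa}\wt{S}_{\kappa}\wt{V}_{\kappa}^{*}\bigr](e_{j}),
\]
where $e_{j}$ selects the column of the bracketed residual corresponding to the index $j\in E_{\kappa}\subset Q_{\kappa}$. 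The bracketed matrix has spectral norm at most $\wt{\tau}$ by the local SVD criterion~\eqref{eqn:localSVD}, and $\|(X_{\kappa})_{i,:}\|_{2}\le 1$ because $X_{\kappa}$ has orthonormal columns (rows of a matrix with orthonormal columns have $2$-norm at most one, as they are the square roots of diagonal entries of the orthogonal projector $X_{\kappa}X_{\kappa}^{*}$). This contributes at most $\wt{\tau}$.

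For the case $i\notin Q_{\kappa}$, the approximation $\wt{f}_{ij}$ is zero, so I only need to bound $|f_{\sigma,\mu}(A)_{ij}|$. The definition of $Q_{\kappa}$ as the $m$-neighborhood of $E_{\kappa}$ implies that $d(i,j)>m$, hence $d(i,j)\ge m+1$ because geodesic distances are integers. Theorem~\ref{thm:decaygauss} then gives $|f_{\sigma,\mu}(A)_{ij}|\le K\rho^{d(i,j)}\le K\rho^{m+1}$, which is strictly smaller than $2K\rho^{m/2+1}$ since $\rho<1$ and $m\ge 0$, so this case is subsumed by the target bound with room to spare.

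Combining the two cases yields the max-norm estimate~\eqref{eqn:maxerror}. I expect the proof to be mostly bookkeeping: the only mildly delicate point is matching the partition-based construction to the $j$-centered hypothesis of Theorem~\ref{thm:truncate}, and this is handled once one notes that every $j\in E_{\kappa}$ lies in $Q_{\kappa}$ together with its entire $m$-neighborhood. A second small point worth checking is that $m$ may be assumed even, which is needed to invoke Theorem~\ref{thm:truncate} directly; if $m$ were odd one would apply Theorem~\ref{thm:truncate} with $m-1$ and lose only a single factor of $\rho$, which does not affect the form of the bound.
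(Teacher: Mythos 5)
Your proposal follows essentially the same route as the paper's proof: a triangle inequality through the intermediate quantity $f_{\sigma,\mu}(A_{\kappa})_{ij}$, with Theorem~\ref{thm:truncate} bounding the truncation error by $2K\rho^{m/2+1}$, the local SVD criterion~\eqref{eqn:localSVD} (via orthonormality of $X_{\kappa}$) contributing $\wt{\tau}$, and Theorem~\ref{thm:decaygauss} handling the $i\notin Q_{\kappa}$ case where $\wt{f}_{ij}=0$. The only differences are cosmetic --- you make the row-selection argument for the SVD piece and the evenness of $m$ explicit, where the paper uses $\norm{X_{\kappa}}_{2}=1$ directly --- so the argument is correct and equivalent.
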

\begin{proof}
  For each element $\kappa$, from Eq.~\eqref{eqn:localSVD} we have
  \begin{equation}
    \begin{split}
      &\max_{i\in Q_{\kappa},j\in E_{\kappa}}
      \abs{f_{\sigma,\mu}(A_{\kappa})_{ij}-\wt{f}_{ij}}=
      \max_{i\in Q_{\kappa},j\in
      E_{\kappa}}\abs{f_{\sigma,\mu}(A_{\kappa})_{ij}-(U_{\kappa})_{i,:}(V_{\kappa})_{:,j}}\\
      \le &\norm{f_{\sigma,\mu}(A_{\kappa})-U_{\kappa}V_{\kappa}}_{2}
      \le \norm{X_{\kappa}}_{2} \wt{\tau} = \wt{\tau}.
    \end{split}
    \label{eqn:lssestimate1}
  \end{equation}
  Using Theorem~\ref{thm:truncate} and the definition of the extended
  element~\eqref{eqn:Qkappa}
  \begin{equation} 
    \max_{i\in Q_{\kappa},j\in E_{\kappa}}
    \abs{f_{\sigma,\mu}(A)_{ij}-f_{\sigma,\mu}(A_{\kappa})_{ij}} \le  2K
    \rho^{\frac{m}{2}+1}.
    \label{eqn:lssestimate2}
  \end{equation}
  For vertices $i\notin Q_{\kappa},j\in E_{\kappa}$,  $\wt{f}_{ij}=0$.
  Then from Theorem~\ref{thm:decaygauss} and use $\rho<1$
  \begin{equation} 
    \max_{i\notin Q_{\kappa},j\in E_{\kappa}}
    \abs{f_{\sigma,\mu}(A)_{ij}-\wt{f}_{ij}} =
    \abs{f_{\sigma,\mu}(A)_{ij}} \le K \rho^{m+1} \le K \rho^{m/2+1}.
    \label{eqn:lssestimate3}
  \end{equation}
  Combining
  Eqs.~\eqref{eqn:lssestimate1},\eqref{eqn:lssestimate2},\eqref{eqn:lssestimate3},
  we have
  \[
  \begin{split}
  &\norm{f_{\sigma,\mu}(A) - \wt{f}}_{\max} = 
  \max_{1\le i,j\le n}\abs{f_{\sigma,\mu}(A)_{ij} - \wt{f}_{ij}} \\
  =& \max_{\kappa} \left\{\max\left\{\max_{i\in Q_{\kappa},j\in
  E_{\kappa}}\abs{f_{\sigma,\mu}(A)_{ij} - \wt{f}_{ij}}, \max_{i\notin Q_{\kappa},j\in
  E_{\kappa}}\abs{f_{\sigma,\mu}(A)_{ij} - \wt{f}_{ij}}\right\}  \right\} \\
  =&
  \max_{\kappa}\left\{\max\{2K \rho^{\frac{m}{2}+1}+\wt{\tau}, K
  \rho^{m/2+1}\}  \right\} = 2K \rho^{\frac{m}{2}+1}+\wt{\tau}.  \end{split}
  \]
\end{proof}

\begin{remark}
Theorem~\ref{thm:lss} indicates that in order to accurately approximate
the LSS operator, the SVD truncation criterion $\wt{\tau}$ must be small
enough.  However, this may not necessarily be the case for approximating
interior eigenvalues. This will be discussed in
section~\ref{sec:numer}.
\end{remark}

Finally, we summarize the algorithm for finding the divide-and-conquer
method for constructing the LSS basis set in Algorithm~\ref{alg:LSS}.

\begin{algorithm}  
  \begin{small}
    \begin{center}
      \begin{minipage}{5in}
        \KwIn{\begin{tabular}{l} 
          (1) 
          \begin{minipage}[t]{4.0in} 
            Sparse Hermitian matrix $A$, center $\mu$, width $\sigma$, SVD
            truncation tolerance $\wt{\tau}$.
          \end{minipage}\\
          (2) 
          \begin{minipage}[t]{4.0in} 
            Number of elements $M$, partition of elements
            $\{E_{\kappa}\}_{\kappa=1}^{M}$ and extended elements
            $\{Q_{\kappa}\}_{\kappa=1}^{M}$.
          \end{minipage}
        \end{tabular}
        }
        \KwOut{\begin{tabular}{l} 
          \begin{minipage}[t]{4.0in} 
            LSS basis set $\{U_{\kappa}\}_{\kappa=1}^{M}$.
          \end{minipage}\\
        \end{tabular}
        }

        \For{$\kappa=1,\ldots,M$}{
        Compute the (partial) eigen-decomposition according
        to~\eqref{eqn:eigenAkappa}\;
        Compute the local SVD
        decomposition according to~\eqref{eqn:localSVD} and only keep singular
        vectors with singular values larger than $\wt{\tau}$\;
        Compute $U_{\kappa}$ with matrix multiplication
        according to~\eqref{eqn:UV}\;
        } 
      \end{minipage}
    \end{center}
  \end{small}
  \caption{Localized spectrum slicing basis set.}
  \label{alg:LSS}
\end{algorithm}

%
%
%
%
%

\subsection{Complexity}\label{subsec:complexity}

In order to simplify the analysis of the complexity of the
Algorithm~\ref{alg:LSS} for finding the LSS basis set, we make the assumption that
the set of $n$ vertices is equally divided into $M$
elements, so that $\abs{E_{\kappa}} = \frac{n}{M} \equiv \abs{E}$.
\REV{As $n$ increases we assume $|E|$ can be kept as a constant, i.e.
the number of elements $M$ increases proportionally with respect to
$n$.}
$\abs{Q_{\kappa}} =
\frac{c_{Q} n}{M} \equiv c_{Q} \abs{E}$, where $c_{Q}$ is a small number denoting the ratio
between the size of the extended element and the size of the element.
\REV{For instance, for the discretized 1D and 2D Laplacian operators in
the numerical examples, $c_{Q}$ is set to be $3$ and $9$, respectively.}

Denote by $s_{\kappa}$ the column dimension of $X_{\kappa}$ in the
partial eigen-decomposition of $A_{\kappa}$, and by
$t_{\kappa}$ the column dimension of $U_{\kappa}$ with
$t_{\kappa}\le s_{\kappa}$. For simplicity we assume
$\{s_{\kappa}\},\{t_{\kappa}\}$ are uniform i.e.
$s_{\kappa}=s, t_{\kappa}=t,\kappa=1,\ldots,M$.
If $A_{\kappa}$ is treated as a dense matrix for the computation of the
local eigen-decomposition of $A_{\kappa}$, then the cost is
$c_{\mathrm{Eig,d}}\abs{Q_{\kappa}}^{3}$.  
The cost of the SVD decomposition is $c_{\mathrm{SVD}}\abs{E_{\kappa}}
s_{\kappa}^2$.
The cost of matrix multiplication to obtain $U_{\kappa}$ is
$c_{\mathrm{MM}}  \abs{Q_{\kappa}} s_{\kappa} t_{\kappa}$.
So the total cost for finding the LSS basis set is proportional to
\begin{equation}
\sum_{\kappa=1}^{M} c_{\mathrm{Eig,d}}\abs{Q_{\kappa}}^{3} +
c_{\mathrm{SVD}}\abs{E_{\kappa}}s_{\kappa}^2  +
c_{\mathrm{MM}}\abs{Q_{\kappa}}s_{\kappa}t_{\kappa} =
n \left( c_{\mathrm{Eig,d}} c_{Q}^{3} \abs{E}^2 + c_{\mathrm{SVD}} s^2 +
c_{\mathrm{MM}} c_{Q} st \right).
  \label{eqn:complexLSS}
\end{equation}
If we assume that as $n$ increases, the spectral radius of $A$ does not
increase, then 
all constants in the parenthesis in the right hand side of
Eq.~\eqref{eqn:complexLSS} are independent of $n$, and the overall
computational complexity for finding the LSS basis set is $\Or(n)$.


In practice the constant for the finding the local eigen-decomposition
can be large due to the term $\abs{E}^2$ in
Eq.~\eqref{eqn:complexLSS}.  Since $A_{\kappa}$ is still a sparse
matrix on $Q_{\kappa}$, iterative methods can be used to reduce the
computational cost to $c_{\mathrm{Eig,i}} \abs{Q_{\kappa}}
s_{\kappa}^2$. This modifies the overall complexity to be
\[
n \left( c_{\mathrm{Eig,i}} c_{Q} s^2 + c_{\mathrm{SVD}} s^2 +
c_{\mathrm{MM}} c_{Q} st \right).
\]
However, it should be noted that the preconstant $c_{\mathrm{Eig,i}}$
might be larger than $c_{\mathrm{Eig,d}}$. Whether direct or iterative
method should be used to solve the local eigenvalue problem may depend
on a number of practical factors such as the size of the local
problem, and the availability of efficient preconditioner on the local
domain etc.

\subsection{Compute interior eigenvalues}\label{sec:interior}

Using the LSS basis set in~\eqref{eqn:LSSbasis}, one may compute the
interior eigenvalues near $\mu$ together with its associated
eigenvectors. This can be done by using the projected matrices
$A_{U},B_{U}$ according
to Eq.~\eqref{eqn:projAB}. 
Due to the spatial sparsity of
$U$, $A_{U},B_{U}$ are also sparse matrices,  
and can be assembled efficiently with local computation. First, the matrix
multiplication $Z = A U$ can be performed locally.  This is because each
column of $U_{\kappa}$ is localized in $Q_{\kappa}$, \REV{then}
\begin{equation}
  Z_{\kappa} = A U_{\kappa} \approx A_{\kappa}U_{\kappa}.
  \label{eqn:Zkappa}
\end{equation}
Second, denote by 
\[
(A_{U})_{\kappa',\kappa}=U_{\kappa'}^* Z_{\kappa}, \quad
(B_{U})_{\kappa',\kappa}=U_{\kappa'}^* U_{\kappa},
\]
then for each $\kappa$ it is sufficient to loop over elements
$E_{\kappa'}$ so that $Q_{\kappa'}\bigcap Q_{\kappa}$ is non-empty.
The details for constructing the projected matrices are given in
Algorithm~\ref{alg:assembly}.

\begin{algorithm}  
  \begin{small}
    \begin{center}
      \begin{minipage}{5in}
        \KwIn{\begin{tabular}{l} 
          (1) 
          \begin{minipage}[t]{4.0in} 
            Sparse Hermitian matrix $A$.
          \end{minipage}\\
          (2) 
          \begin{minipage}[t]{4.0in} 
            Number of elements $M$, partition of elements
            $\{E_{\kappa}\}_{\kappa=1}^{M}$, extended elements
            $\{Q_{\kappa}\}_{\kappa=1}^{M}$, submatrices
            $\{A_{\kappa}\}_{\kappa=1}^{M}$, LSS basis set
            $\{U_{\kappa}\}_{\kappa=1}^{M}$ with total number of basis
            functions $n_{b}$.
          \end{minipage}
        \end{tabular}
        }
        \KwOut{\begin{tabular}{l} 
          \begin{minipage}[t]{4.0in} 
            Projected matrices $A_{U},B_{U}$.
          \end{minipage}\\
        \end{tabular}
        }

        Let $A_{U},B_{U}$ be zero matrices of size $n_{b}\times
        n_{b}$.

        \For{$\kappa=1,\ldots,M$}{
        Compute $Z_{\kappa}\gets A_{\kappa} U_{\kappa}$\;
        \For{$\kappa'$ so that $Q_{\kappa'}\bigcap
        Q_{\kappa}\ne \emptyset$ }{

        Compute $(A_{U})_{\kappa',\kappa}\gets 
        U_{\kappa'}^{*} Z_{\kappa}$\;
        Compute $(B_{U})_{\kappa',\kappa}\gets 
        U_{\kappa'}^{*} U_{\kappa}$\;
        }}

        Symmetrize $A_{U}\gets \frac12 (A_{U}+A_{U}^{*}),\quad
        B_{U}\gets \frac12 (B_{U}+B_{U}^{*})$.
      \end{minipage}
    \end{center}
  \end{small}
  \caption{Assembly of the projected matrices.}
  \label{alg:assembly}
\end{algorithm}

After $A_{U},B_{U}$ are assembled, the eigenvalues and corresponding
eigenvectors near $\mu$ can be solved in various ways.  When the size of
the LSS basis set $n_{b}$ is small, one can treat $A_{U},B_{U}$ as dense
matrices and solve the generalized eigenvalue problem
\begin{equation}
  A_{U} C = B_{U} C \Theta,
  \label{eqn:generalEig}
\end{equation}
and only keep the Ritz values
$\Theta=\mathrm{diag}[\theta_{1},\ldots,\theta_{n_{b}}]$ and corresponding Ritz vectors
$C$ near $\mu$.  Each column of the Ritz vector $C_{j}$ can be partitioned
according to the element partition $\{E_{\kappa}\}$ as
\[
  C_{j} = [C_{1,j},\ldots,C_{M,j}]^{T}.
\]
Then an approximate eigenvector for $A$ can be computed as
\begin{equation}
  \wt{X}_{j} = U C_{j} = \sum_{\kappa} U_{\kappa} C_{\kappa,j}.
  \label{eqn:approxEigvec}
\end{equation}

We remark that in the computation of interior eigenvalues, 
spurious eigenvalues may appear. A spurious eigenvalue is
a Ritz value $\theta_{j}$ near the vicinity of $\mu$ as obtained from
Eq.~\eqref{eqn:generalEig}, but the
corresponding vector $\wt{X}_{j}$ as
given in Eq.~\eqref{eqn:approxEigvec} is not an approximate
eigenvector.  The appearance of spurious eigenvalue is also referred to
as spectral pollution~\cite{GruberRappaz1985,Knyazev1997}, and
could be identified by computing the residual
\begin{equation}
  R_{j} = A \wt{X}_{j} - \wt{X}_{j} \theta_{j}.
  \label{eqn:residual}
\end{equation}
A Ritz value $\theta_{j}$ corresponding to large residual norm
$\norm{R_{j}}_{2}$ should be removed.  Note that the residual can also
be computed with local computation
\begin{equation}
  R_{j} = \sum_{\kappa} \left(Z_{\kappa} C_{\kappa,j} - U_{\kappa}
  C_{\kappa,j} \theta_{j}\right),
  \label{eqn:residuallocal}
\end{equation}
where $Z_{\kappa}$ is given in~\eqref{eqn:Zkappa}. \REV{Our numerical
experience indicates that the use of residual is an effective way for
identifying spurious eigenvalues when the LSS basis set is accurate
enough for approximating the subspace spanned by the eigenvectors to be
computed.  In such case the norm of the residual for most Ritz values is small and
the norm of the residual for the spurious eigenvalue stands out. When
the basis set cannot accurately capture all the eigenvalues in the
prescribed interval especially for those clustered near the
boundary of the interval, it becomes more difficult to identify all the
spurious eigenvalues.}

\REV{
\subsection{Domain partitioning for general sparse matrices}\label{subsec:lssgeneral}

For a general sparse matrix $A$, we discuss here the strategy to partition the
associated undirected graph $G=(\mc{V},\mc{E})$ into $M$ elements
$\{E_{\kappa}\}_{\kappa=1}^M$.
Intuitively we would like to choose a
partition that keeps all $E_{\kappa}$ to have similar
sizes, while minimizing the number of edges that is being cut by the
partition, i.e. $\sum_{\kappa,\kappa'=1}^{M} \sum_{i\in E_{\kappa},j\in
E_{\kappa'}} w_{ij}$. Here $w_{ij}=1$ if $A_{ij}\ne 0$ and $0$
otherwise. This is called a minimal $M$-cut problem. It is known that
the minimal $M$-cut problem is NP-hard. Various heuristic methods have
been developed. Here we use the nested
dissection approach~\cite{George1973} as implemented in the
METIS~\cite{KarypisKumar1998} package.
The nested dissection approach can find an approximate minimal $2$-cut of
the graph, and then recursively partitions each part of the graph, with
iterative adjustment of the size of $E_{\kappa}$. For each $\kappa$ we
define a neighbor list $N_{\kappa}$, which consists of 
$\kappa$ itself, as well as other element indices
$\kappa'$ such that there exists at least one pair of indices $i\in
E_{\kappa},j\in E_{\kappa'}$ and $A_{ij}\ne 0$. 
Then the extended element $Q_{\kappa}$ is defined as the collection of
all indices in $E_{\kappa'}$ such that $\kappa'\in N_{\kappa}$.
Algorithm~\ref{alg:partition} gives a pseudo-code for generating the
elements $\{E_{\kappa}\}$, the neighbor lists $\{N_{\kappa}\}$, and the
extended elements $\{Q_{\kappa}\}$. In terms of implementation, the
partition of the graph is given by a \textit{graph partition map} $\xi$
such that $E_{\kappa} = \{i\in \mc{V} \vert \xi(i) = \kappa\}$, and $\xi$ can be
directly returned from a graph partitioning package such as METIS.
}

\begin{algorithm}  
  \begin{small}
    \begin{center}
      \begin{minipage}{5in}
        \KwIn{\begin{tabular}{l} 
          \begin{minipage}[t]{4.0in}
            Sparse Hermitian matrix $A$ of size $n\times n$. Number of elements $M$.
          \end{minipage} 
        \end{tabular}
        }
        \KwOut{\begin{tabular}{l} 
          \begin{minipage}[t]{4.0in}
            $E_{\kappa},N_{\kappa},Q_{\kappa},\kappa=1,\ldots,M$.
          \end{minipage} 
        \end{tabular}
        }

        $\xi = \mathrm{GraphPartition}(A)$.

        $E_{\kappa} = \{i\in \mc{V} \vert \xi(i) = \kappa\},\kappa=1,\ldots,M$.

        $N_{\kappa} = \{\kappa\}\cup \{\kappa' \vert \exists i\in E_{\kappa'},j\in
        E_{\kappa}, A(i,j) \ne 0\},\kappa=1,\ldots,M$.

        $Q_{\kappa} = \{i\in \mc{V} \vert i\in E_{\kappa'}, \kappa'\in
        N_{\kappa}\},\kappa=1,\ldots,M.$
      \end{minipage}
    \end{center}
  \end{small}
  \caption{Generating the set of elements $\{E_{\kappa}\}$ and
  neighboring elements for a general sparse matrix.}
  \label{alg:partition}
\end{algorithm}

\section{Numerical results}\label{sec:numer}



In this section we demonstrate the accuracy and efficiency of the
divide-and-conquer procedure for computing the LSS operator and the LSS
basis set, and for computing interior eigenvalues. All the
computation is performed on a single computational thread of an Intel i7
CPU processor with $64$ gigabytes (GB) of memory using MATLAB.  The
matrix $A$ is obtained from a discretized second order partial
differential operator $-\Delta+V$ in one-dimension (1D) and in
two-dimension (2D) with periodic boundary conditions, \REV{and a general
matrix from the University of Florida matrix collection.}

\subsection{One-dimensional case}

In the 1D case, the global domain is
$\Omega=[0,L]$. The Laplacian operator is
discretized using a 3-point finite difference stencil.  The domain is
uniformly discretized into $n=c_{n} M$ grid points so that $x_{i}=(i-1)
h$, with the grid spacing $h\equiv L/n=0.1$.  
All the $n$ grid points (vertices) are uniformly and contiguously
partitioned into $M$ elements $\{E_{\kappa}\}_{\kappa=1}^{M}$. For
simplicity let $Q_{\kappa}$ be the union of $E_{\kappa}$ and its two
neighbors taking into account the periodic boundary condition, i.e.
\[
Q_{\kappa}=\begin{cases}
  E_{M}\bigcup E_{1}\bigcup E_{2}, & \kappa=1,\\
  \bigcup_{\kappa'=\kappa-1}^{\kappa+1} E_{\kappa}, &
  \kappa=2,\ldots,M-1,\\
  E_{M-1}\bigcup E_{M}\bigcup E_{1}, & \kappa=M.
\end{cases}
\]
The potential $V(x)$ is given by the sum of $n_{w}$ exponential
functions as
\begin{equation}
  V(x) = -\sum_{i=1}^{n_{w}} a_{i}
  e^{-\frac{\mathrm{dist}(x,R_{i})}{\delta_{i}}}.
  \label{eqn:Vx}
\end{equation}
Here $\{R_{i}\}$ are a set of equally spaced points. The distance between
two points $x$ and $x'$ is defined to be the minimal distance between
$x$ and all the periodic images of $x'$, i.e.
\[
\mathrm{dist}(x,x') = \min_{\wt{x}'=x'+kL,k\in \mathbb{Z}}
\abs{x-\wt{x}'}.
\]
In order to study the performance of the algorithm for systems of
increasing sizes, we set 
$L=20 n_{w}$  so that the length of the computational domain is
proportional to the number of potential wells $n_{w}$.  To show that we do not take advantage of
the periodicity of the potential, we introduce some randomness in each
exponential function. We choose $a_{i}\sim \mathcal{N}(5.0,1.0)$, which
is a Gaussian random variable with a mean value $5.0$ and a standard
deviation $1.0$. Similarly the width of the exponential function
$\delta_{i}\sim \mathcal{N}(2.0,0.2)$.  One realization of the potential
with $n_{w}=8$ is given in Fig.~\ref{fig:Vx1D} (a), with the partition of
elements indicated by black dashed lines.  For the choice of parameter
$\mu=2.0$ and $\sigma=1.0$, Fig.~\ref{fig:Vx1D} (b) shows the function
$f_{\sigma,\mu}(\lambda)$ evaluated on the eigenvalues of $A$
\REV{plotted in log-scale in the interval $(-5,10)$}, and
the LSS operator $f_{\sigma,\mu}(A)$ is spectrally localized.
\REV{Fig.~\ref{fig:Vx1D} (c) demonstrates the histogram of the
eigenvalues (unnormalized spectral density) for all eigenvalues of
$A$.}

\begin{figure}[h]
  \begin{center}
    \subfloat[]{\includegraphics[width=0.3\textwidth]{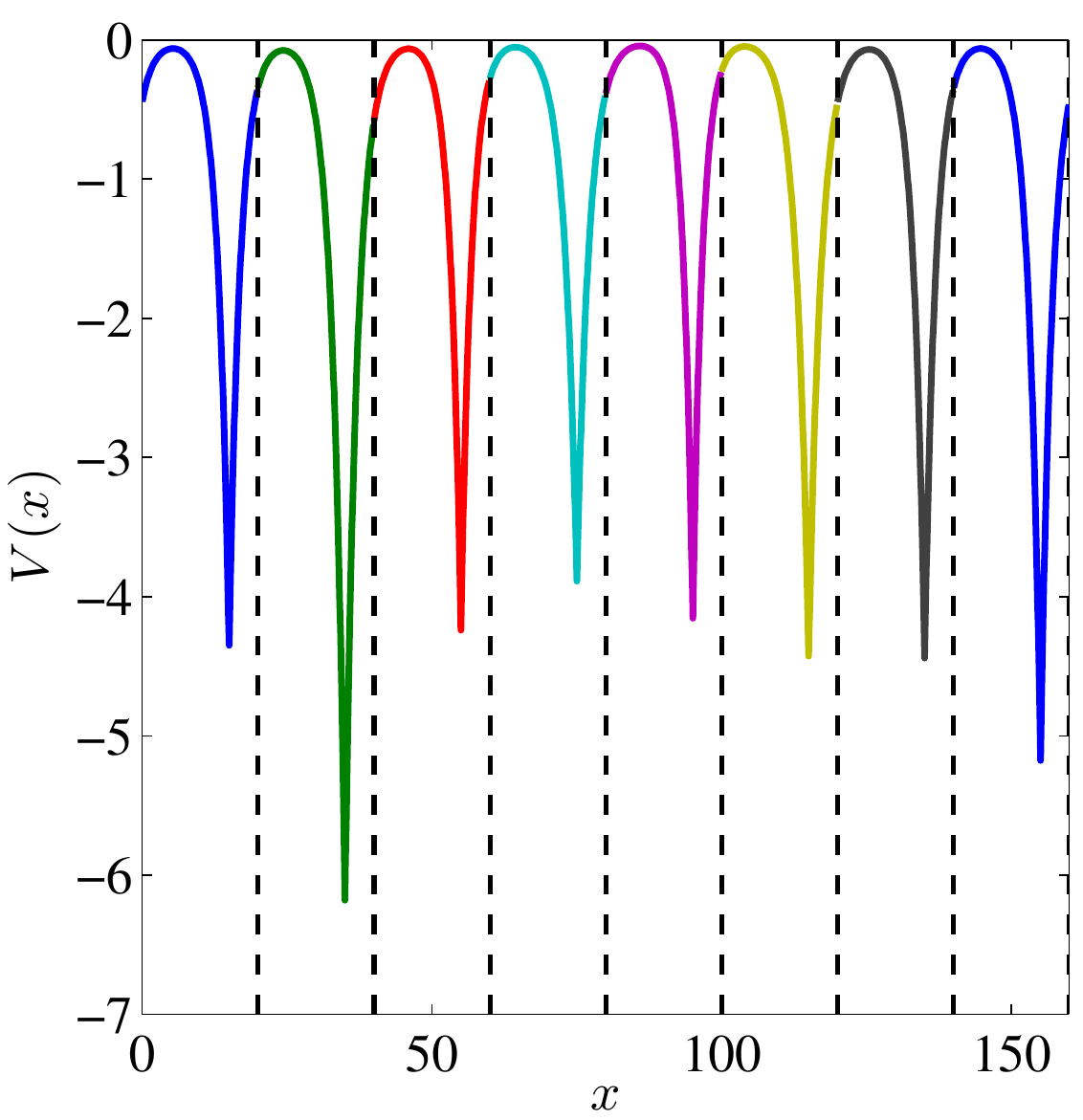}}
    \subfloat[]{\includegraphics[width=0.335\textwidth]{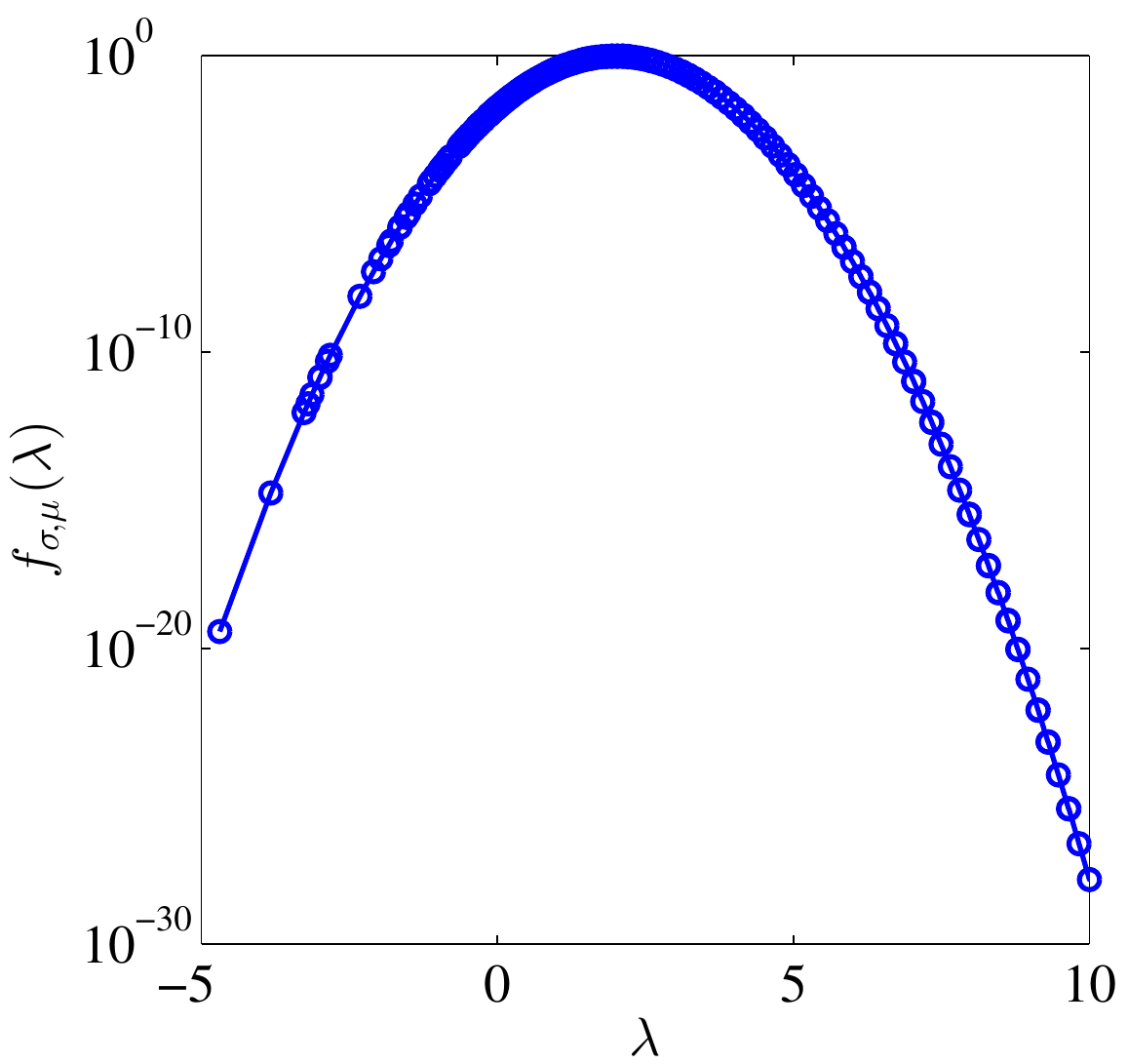}}
    \subfloat[]{\includegraphics[width=0.295\textwidth]{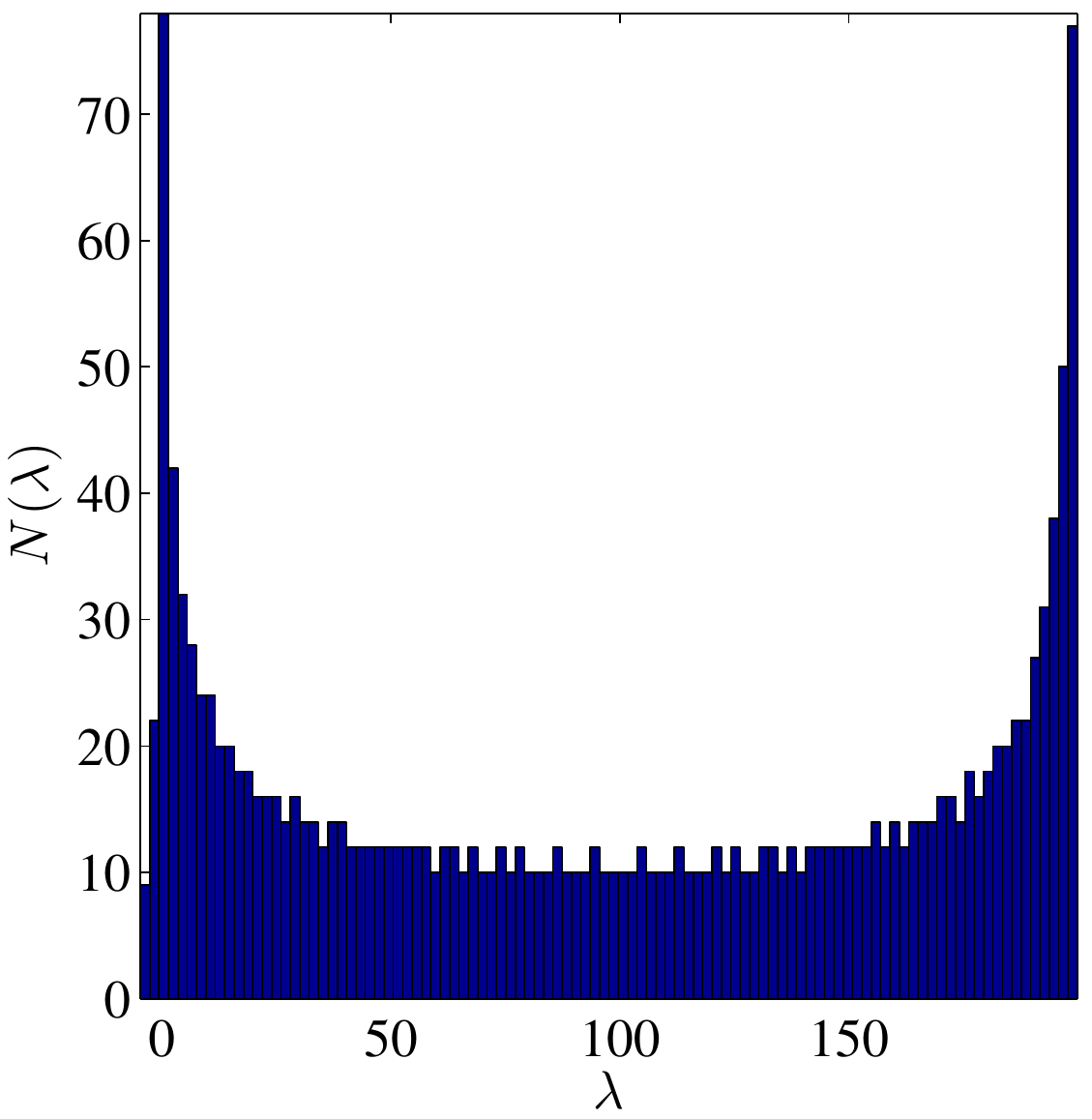}}
  \end{center}
  \caption{\REV{(a) One realization of the 1D potential with $n_{w}=8$. The
  domain is partitioned into $8$ equally sized elements separated by
  black dashed lines. (b) The function $f_{\sigma,\mu}(A)$ with $\sigma=1.0$ and
  $\mu=2.0$ viewed spectrally in the interval $(-5,15)$ plotted in the
  log scale.  The spectral radius of $A$ is $199.89$. (c) The histogram
  of the eigenvalues of $A$.} }
  \label{fig:Vx1D}
\end{figure}

Fig.~\ref{fig:fmuError} (a)-(c) demonstrates the behavior of the exact
LSS operator $f_{\sigma,\mu}(A)$ with $\sigma=1.0$ and increasing value
of $\mu$.  In Fig.~\ref{fig:fmuError}, $[f_{\sigma,\mu}(A)](x,y)$ should
be interpreted using its discretized matrix element
$[f_{\sigma,\mu}(A)]_{ij}$ for $x=(i-1)h,y=(j-1)h$. We find that as
$\mu$ increases, the off-diagonal elements of $f$ decays rapidly and
remains to be well approximated by a banded (and therefore sparse) matrix with
increasing bandwidth.  Fig.~\ref{fig:fmuError} (d)-(f) demonstrates the
quality of the divide-and-conquer approximation $\wt{f}$ to
the LSS operator.  Here we first demonstrate the accuracy of $\wt{f}$
without the truncation using SVD decomposition (i.e. the SVD truncation
criterion $\wt{\tau}=0$ as in Eq.~\eqref{eqn:localSVD}).  When
$\mu=-2.0$, the approximation is nearly exact, while when $\mu$
increases to $20.0$ the relative error is around $10\%$ since the
support size of each column of $f$ already extends beyond each extended
element $Q_{\kappa}$.

\begin{figure}[h]
  \begin{center}
    \subfloat[]{\includegraphics[width=0.3\textwidth]{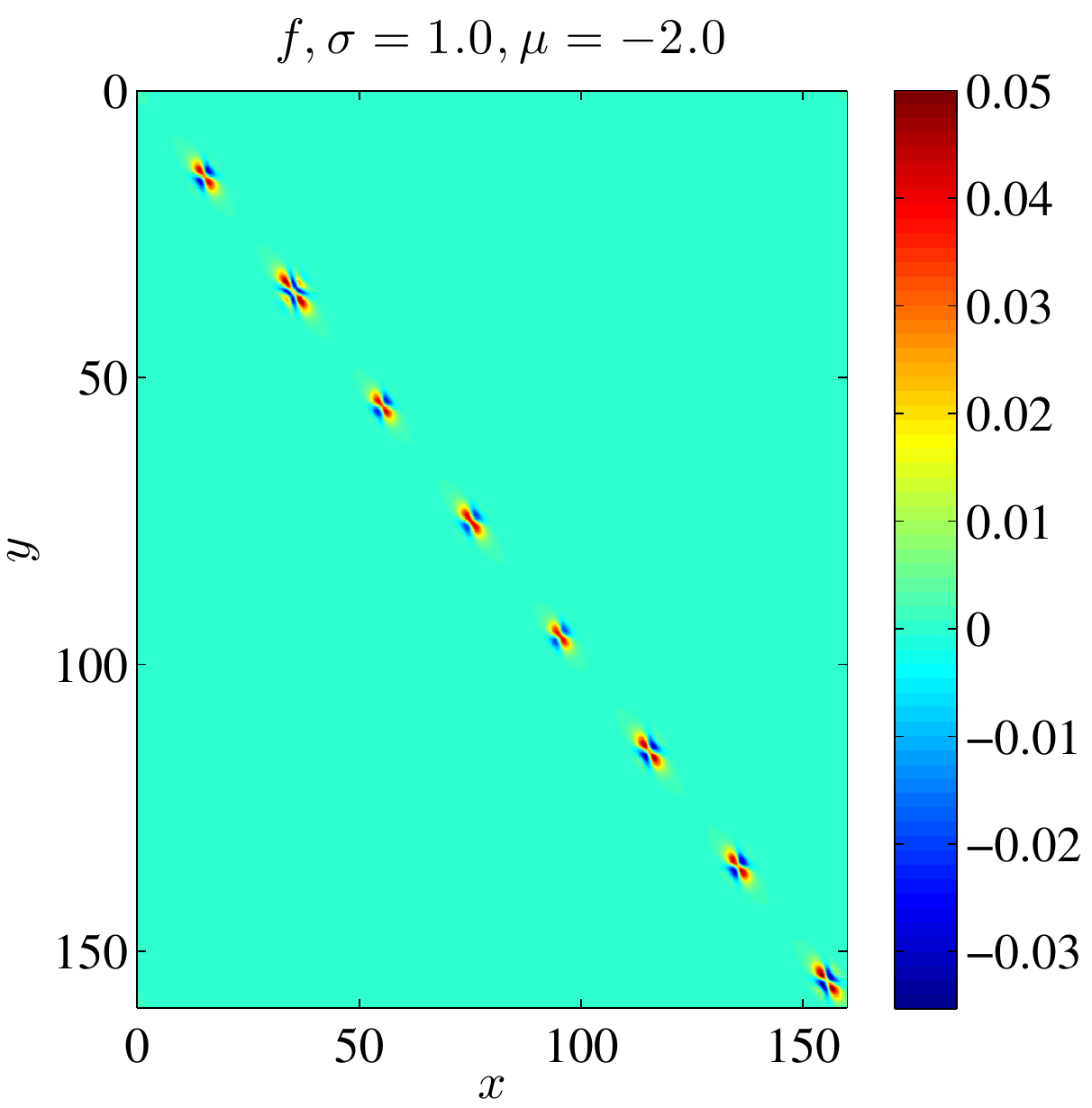}}
    \subfloat[]{\includegraphics[width=0.3\textwidth]{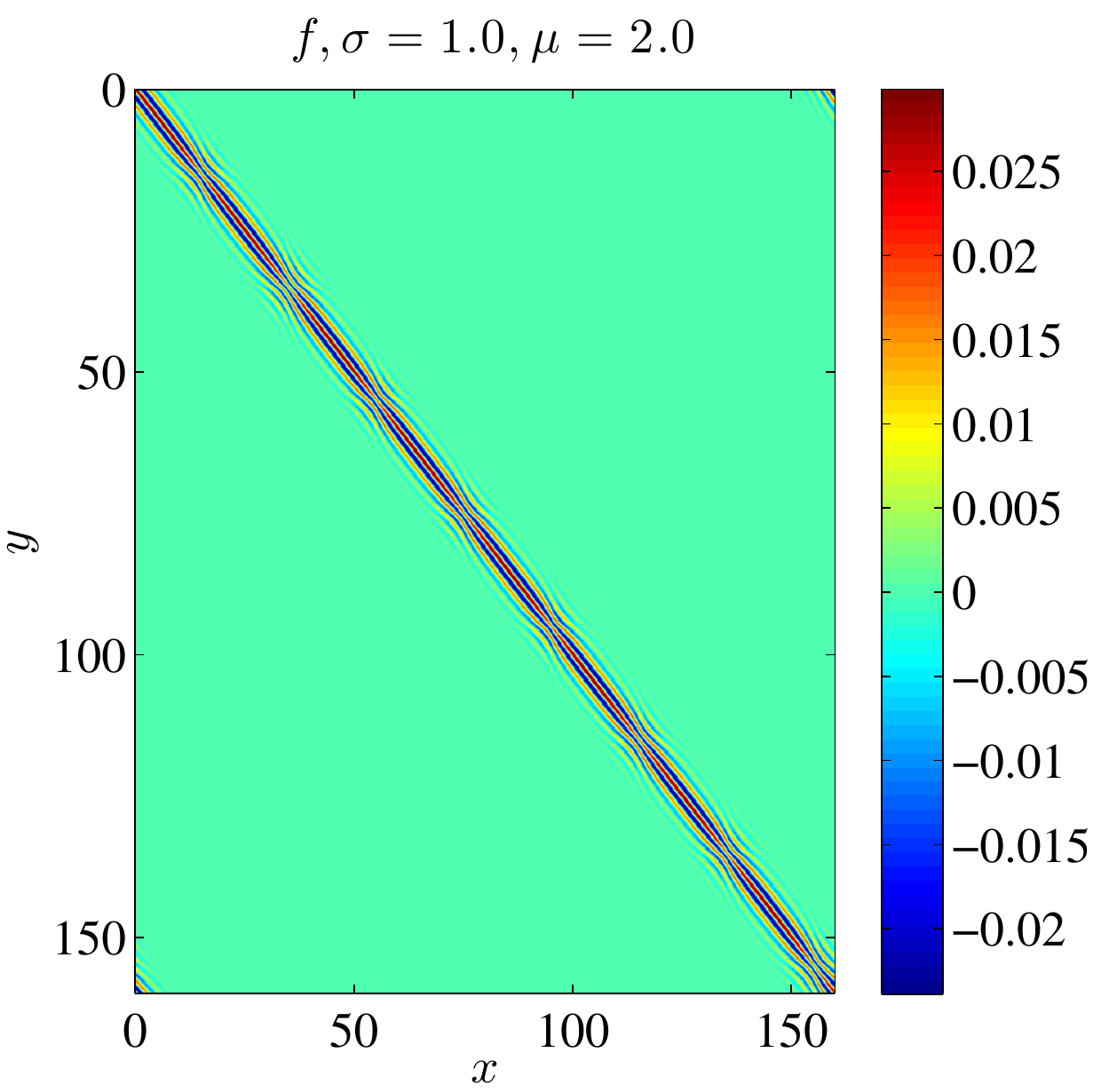}}
    \subfloat[]{\includegraphics[width=0.3\textwidth]{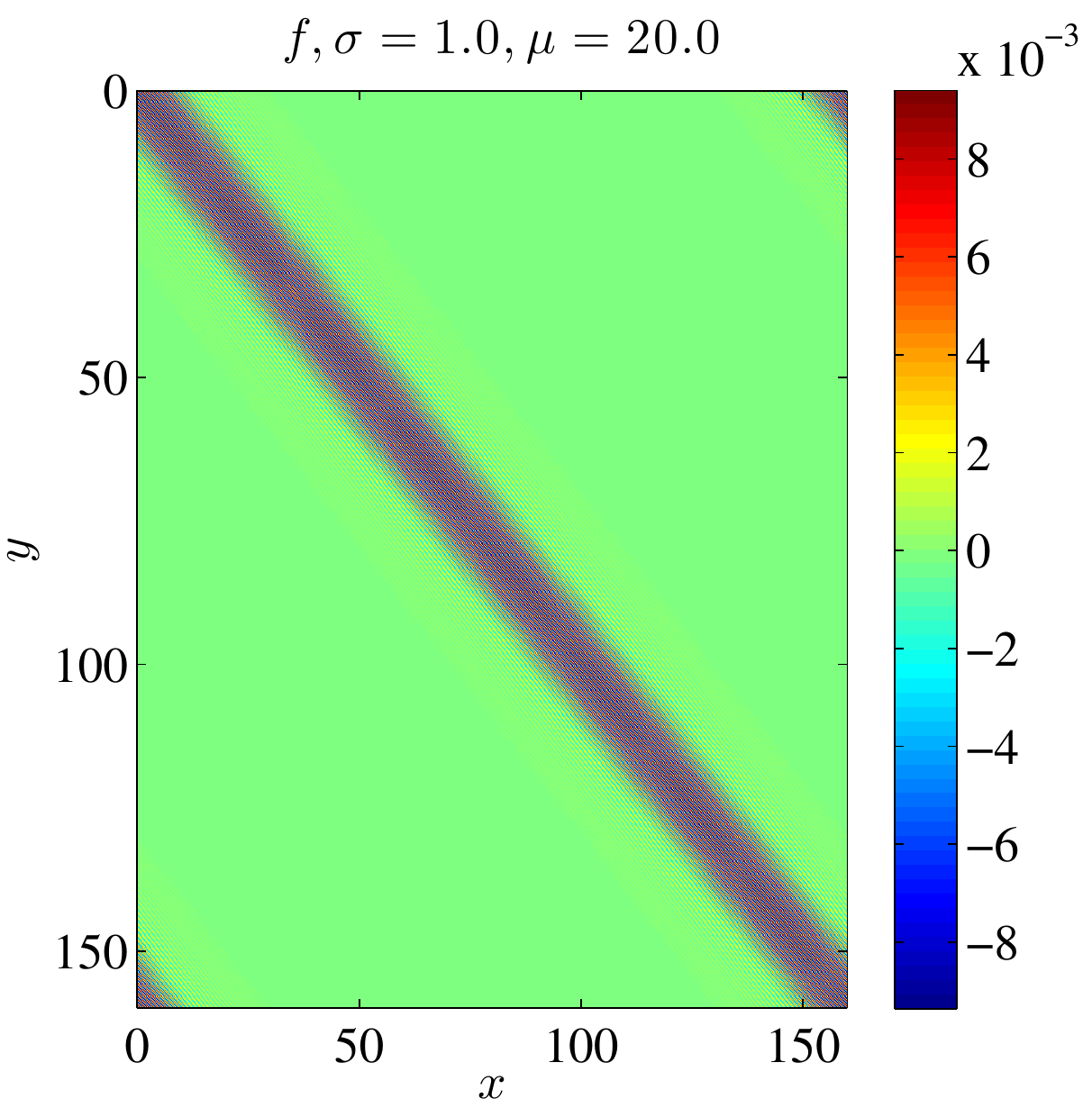}}
    
    \subfloat[]{\includegraphics[width=0.3\textwidth]{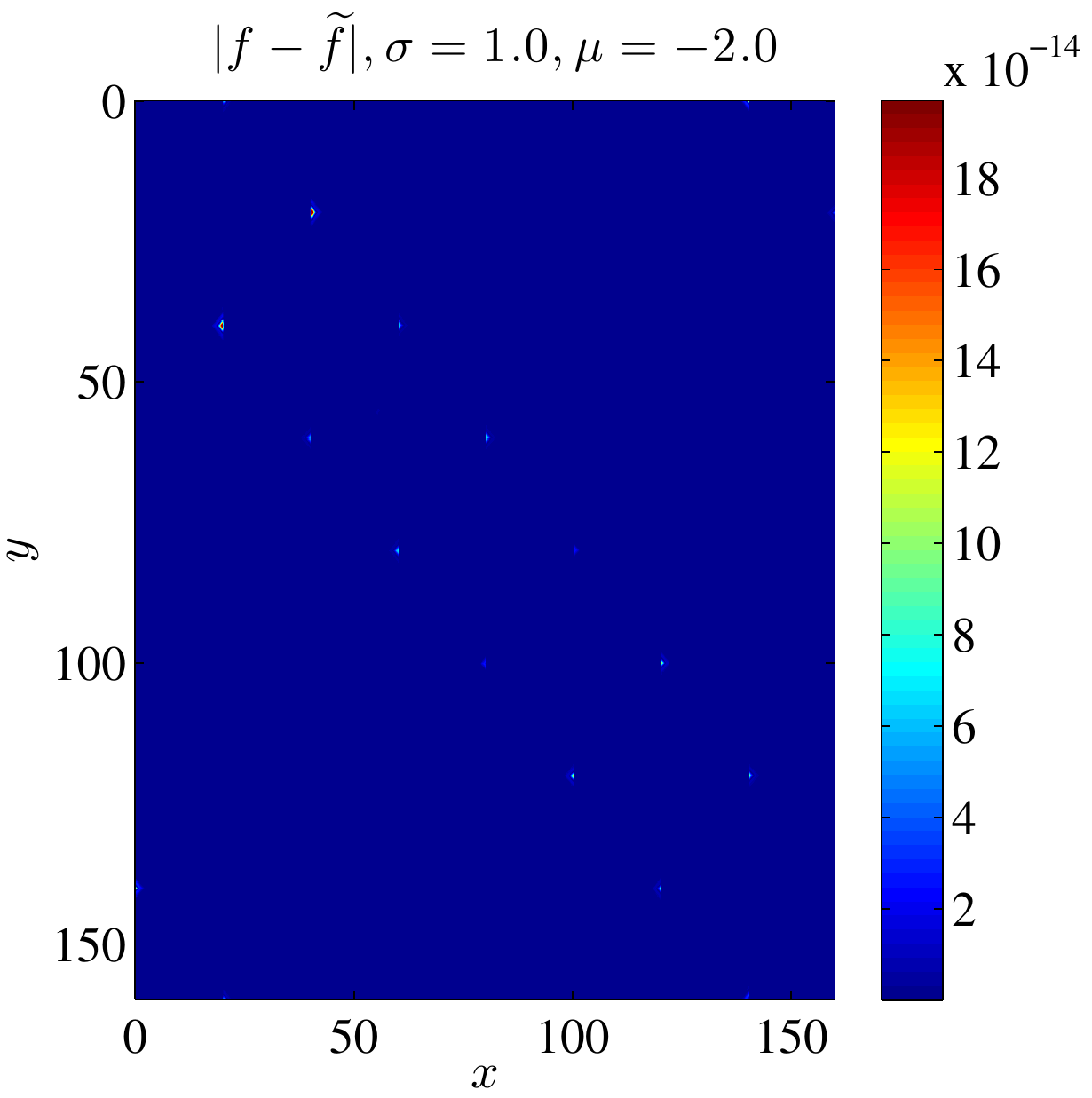}}
    \subfloat[]{\includegraphics[width=0.3\textwidth]{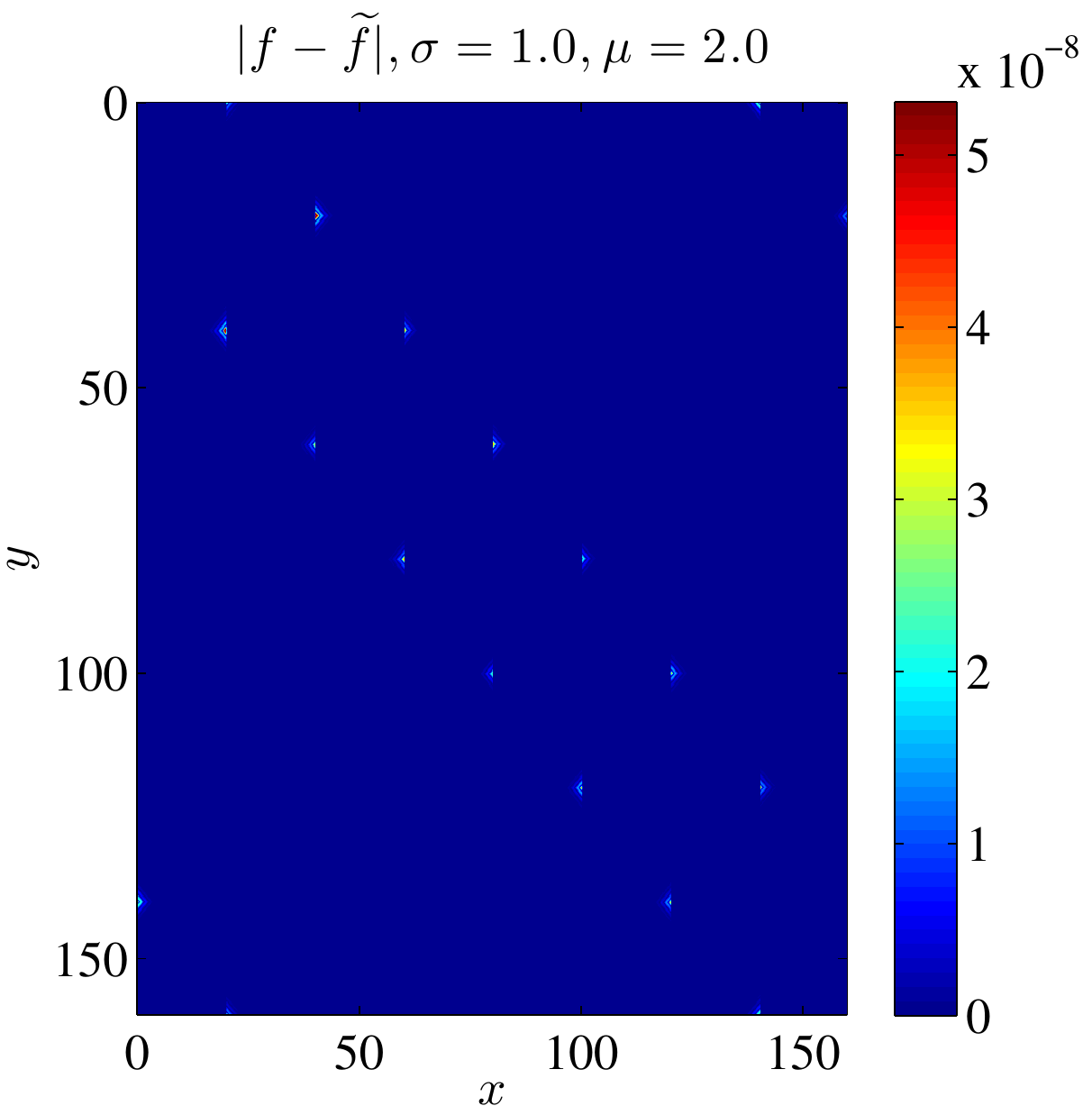}}
    \subfloat[]{\includegraphics[width=0.3\textwidth]{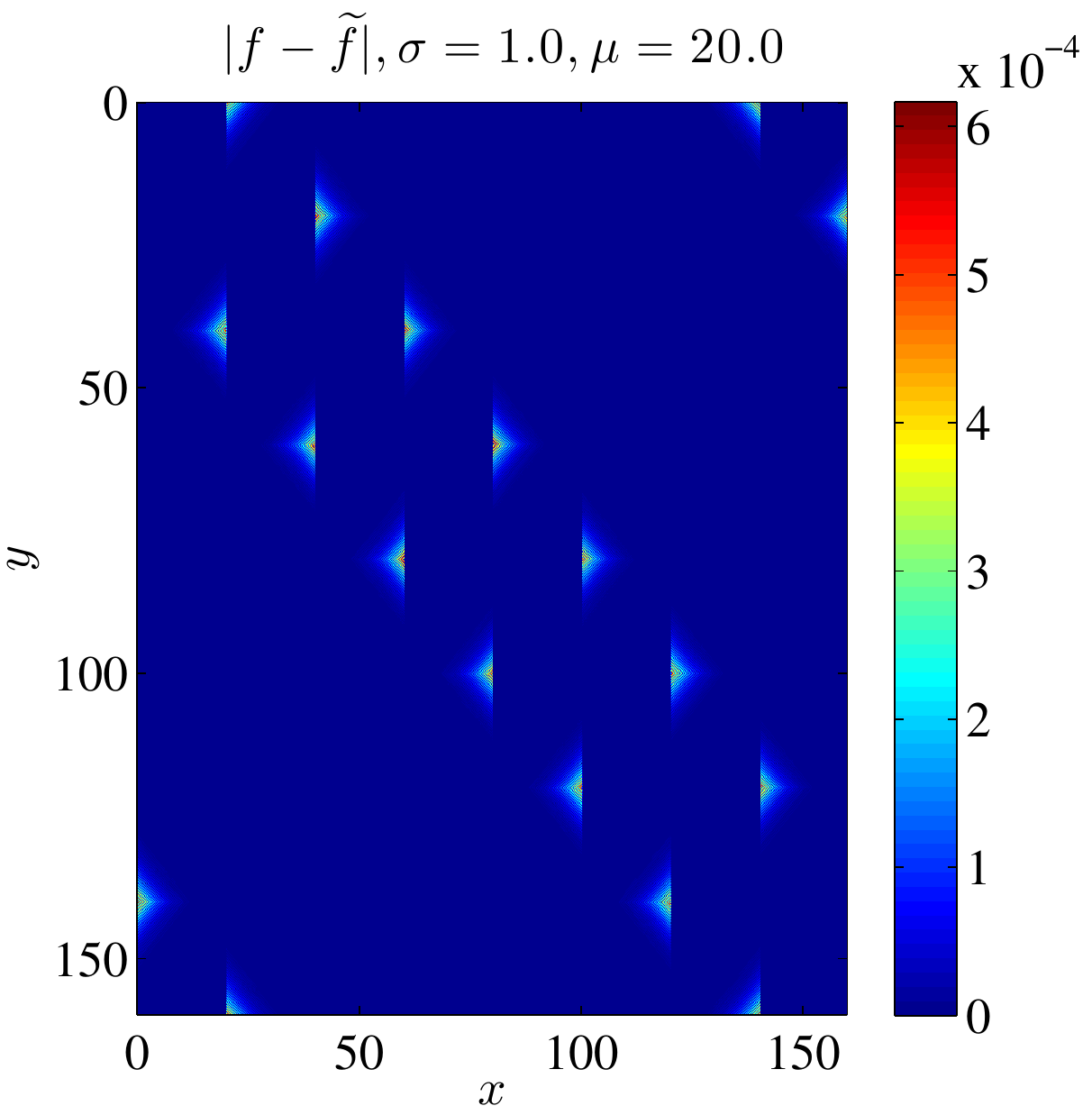}}
  
  \end{center}
  \caption{The LSS operator $f_{\sigma,\mu}(A)$ with $\sigma=1.0$ for (a) $\mu=-2.0$ (b)
  $\mu=2.0$ (c) $\mu=20.0$. The max error between the LSS
  operator and its divide-and-conquer approximation $\wt{f}_{\sigma,\mu}(A)$
  (d) $\mu=-2.0$ (e) $\mu=2.0$ (f) $\mu=20.0$.}
  \label{fig:fmuError}
\end{figure}

A more complete picture of the $\mu$-dependence for approximating the
LSS operator is given in Fig.~\ref{fig:fmuError2}.
Fig.~\ref{fig:fmuError2} (a) shows the max norm error of the
divide-and-conquer approximation to the LSS operator for $\mu$
traversing the entire spectrum of $A$ from $-3.0$ to $200.0$.  The error
increases rapidly as $\mu$ initially increases, achieves its maximum
at $\mu=100$ and then starts to decrease.  Fig.~\ref{fig:fmuError2} (b)
shows the same picture but zooms into the interval near $\mu=0$.  As
$\mu$ increases above $10.0$, the vectors spanning columns of $f_{\sigma,\mu}(A)$
are approximately linear combination of high frequency Fourier modes,
and Fig.~\ref{fig:fmuError2} (a) shows that the Fourier modes are
increasingly more difficult to localize as the frequency increases.
Fig.~\ref{fig:fmuError2} (c)-(d) shows similar behavior for
$\sigma=2.0$.  The profile of the error with respect to $\mu$ closely
resembles a Gaussian function.  Compared to the case with $\sigma=1.0$
the error significantly reduces for all $\mu$, indicating the balance
between spatial locality and spectral locality with varying $\sigma$.

\begin{figure}[h]
  \begin{center}
    \subfloat[]{\includegraphics[width=0.3\textwidth]{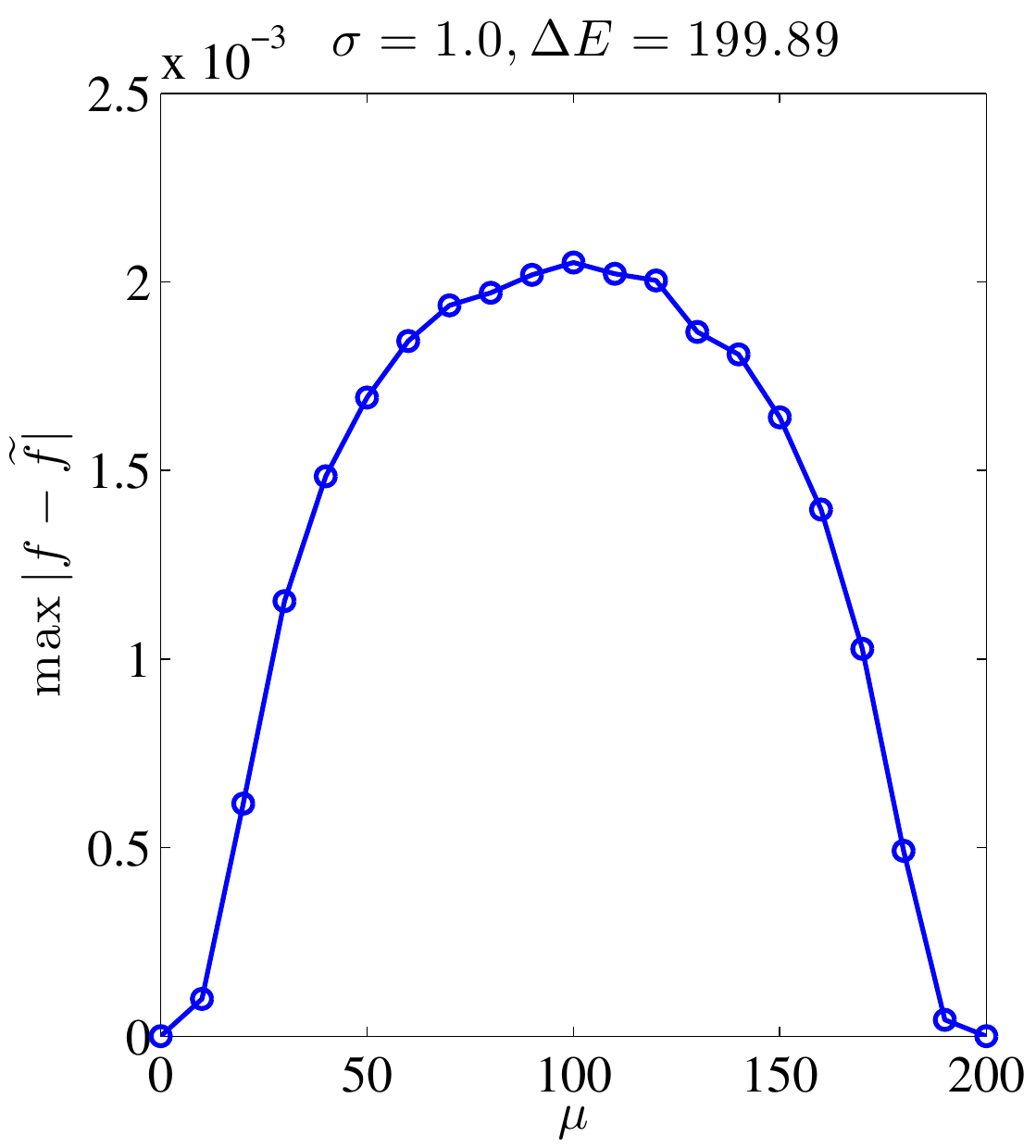}}\quad
    \subfloat[]{\includegraphics[width=0.3\textwidth]{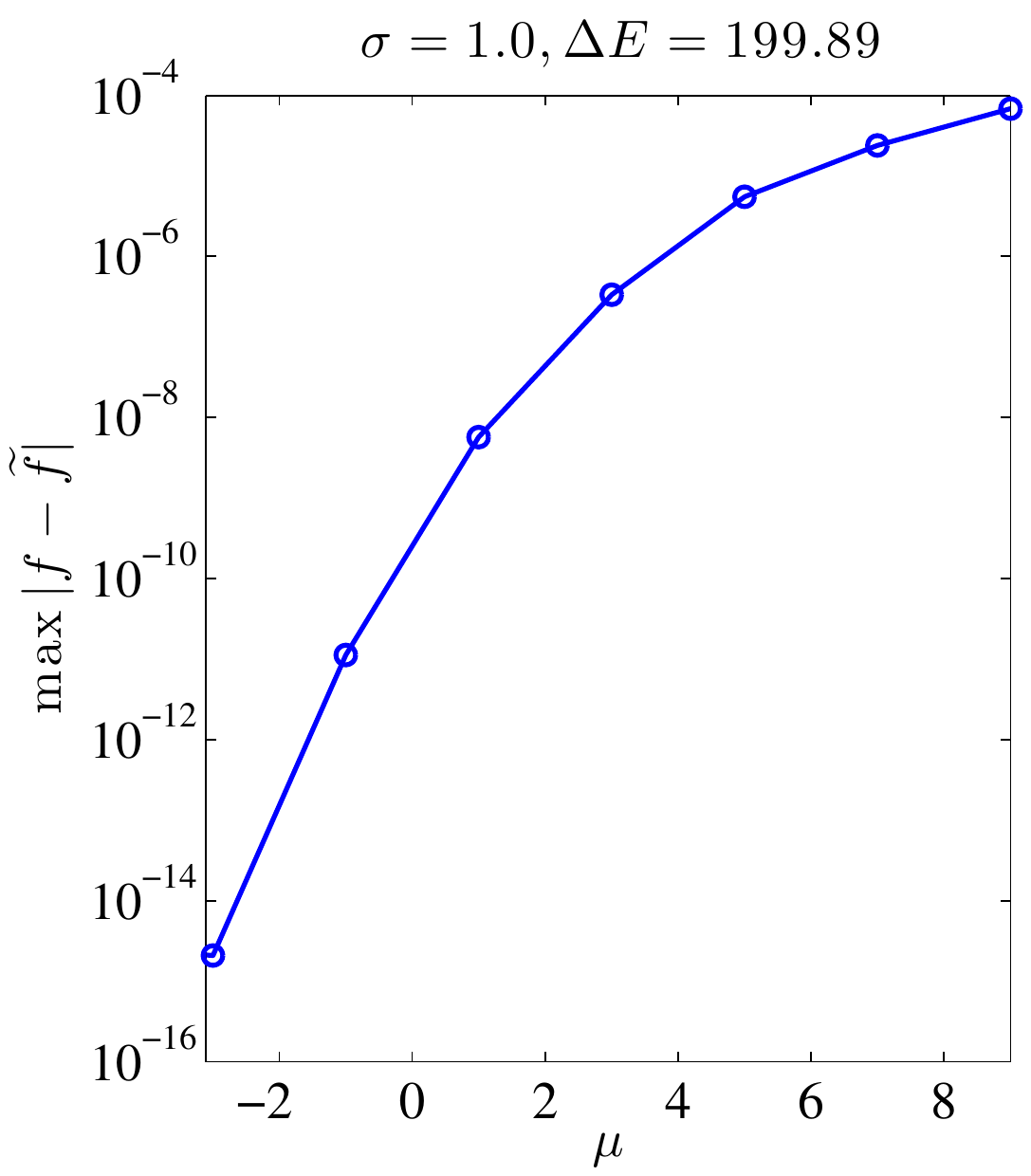}}

    \subfloat[]{\includegraphics[width=0.3\textwidth]{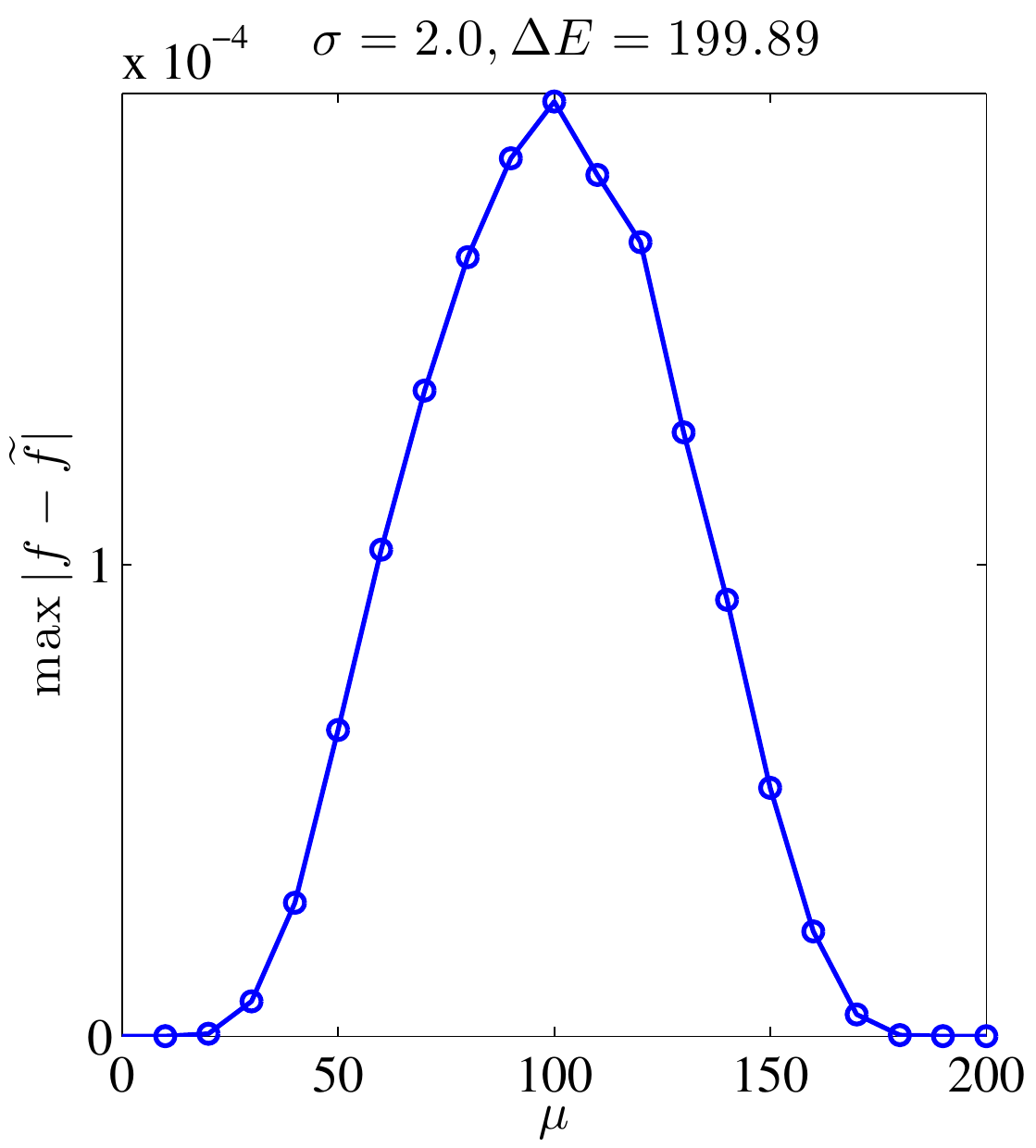}}\quad
    \subfloat[]{\includegraphics[width=0.3\textwidth]{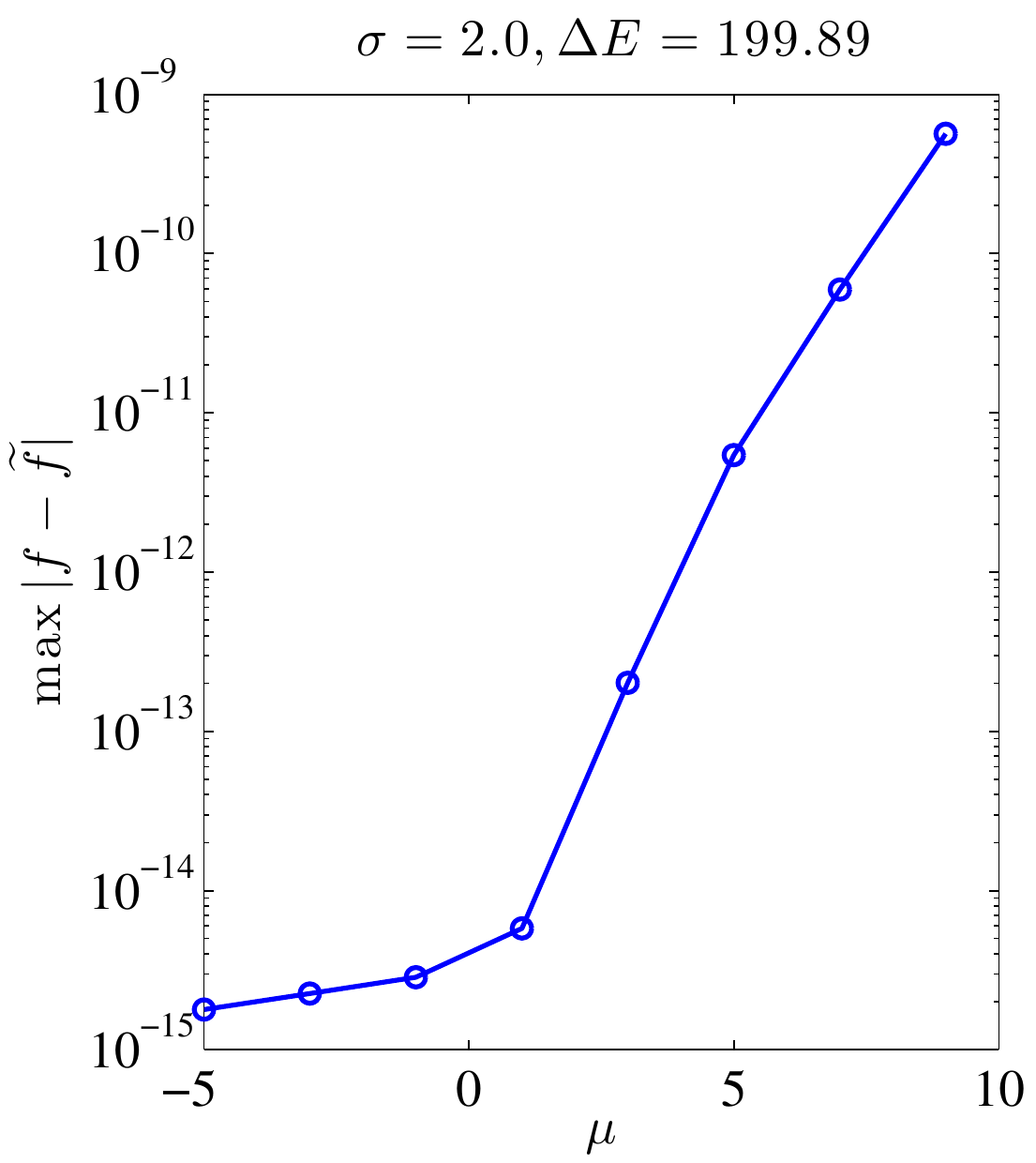}}
  \end{center}
  \caption{Max norm error of the LSS operator traversing the entire
  spectrum of $A$ for (a) $\sigma=1.0$; (b) $\sigma=1.0$, a zoomed in
  view; and (c) $\sigma=2.0$; (d) $\sigma=2.0$, a zoomed in
  view.  }
  \label{fig:fmuError2}
\end{figure}

Fig.~\ref{fig:fErrorSigma} (a) demonstrates the max norm error of the
LSS operator for $\mu=2.0$ with increasing value of $\sigma$. When
$\sigma$ is less than $0.25$ the LSS operator is very localized
spectrally, but the matrix is almost dense.  Therefore the
divide-and-conquer approximation leads to large error. As $\sigma$
increases above $0.25$, the max norm error decreases exponentially
with the increase of $\sigma$.  We observe that the choice of $\sigma$
is crucial: by varying $\sigma$ from $0.5$ to $1.5$, the error is
reduced by over $6$ orders of magnitude from $10^{-4}$ to below $10^{-10}$. 

Next we study the effect of grid refinement by varying the grid size
from $h=0.20$ to $h=0.033$. For 3-point finite difference stencil the
spectral radius of $A$, denoted by $\Delta E$ is proportional to
$1/h^2$, and in practice $\Delta E$ increases from $50$ to $1800$.  We
note that Theorem~\ref{thm:truncate} indicates that the error should be
determined by the ratio $\sigma/\Delta E$, and therefore the size of the
extended element as characterized by the geodesic distance $m$ should
increase proportionally to $\Delta E$ to preserve accuracy. Here instead we fix
the number of elements to be $8$ as the grid refines.  Therefore $m\sim
1/h \sim \sqrt{\Delta E}$, and we should expect that the error increases
as the grid refines.  Fig.~\ref{fig:fErrorSigma} (b) shows that max norm
error of the LSS operator for $\mu=2.0$,$\sigma=1.0$, with increasing
$\Delta E$. As the ratio $\sigma/\Delta E$ decreases over one order of
magnitude, the max norm error does not increase, but rather decreases by
more than a factor of $2$. We note that this numerical result does not
contradict the theoretical prediction, since
Theorem~\ref{thm:approxGaussian} only provide an \textit{upper bound} of
the decay rate, and the \textit{actual} decay rate can be faster.  Note
that as the grid refines, the change towards the high end of the
spectrum is often larger than the change at the low end of the spectrum.
Fig.~\ref{fig:fErrorSigma} indicates that the accuracy of the LSS
operator is relatively insensitive to the change in the high end of the
spectrum, and it may be possible to construct the LSS operator with
improved discretization scheme, without sacrificing too much in terms of
the spatial locality.

\begin{figure}[h]
  \begin{center}
    \subfloat[]{\includegraphics[width=0.3\textwidth]{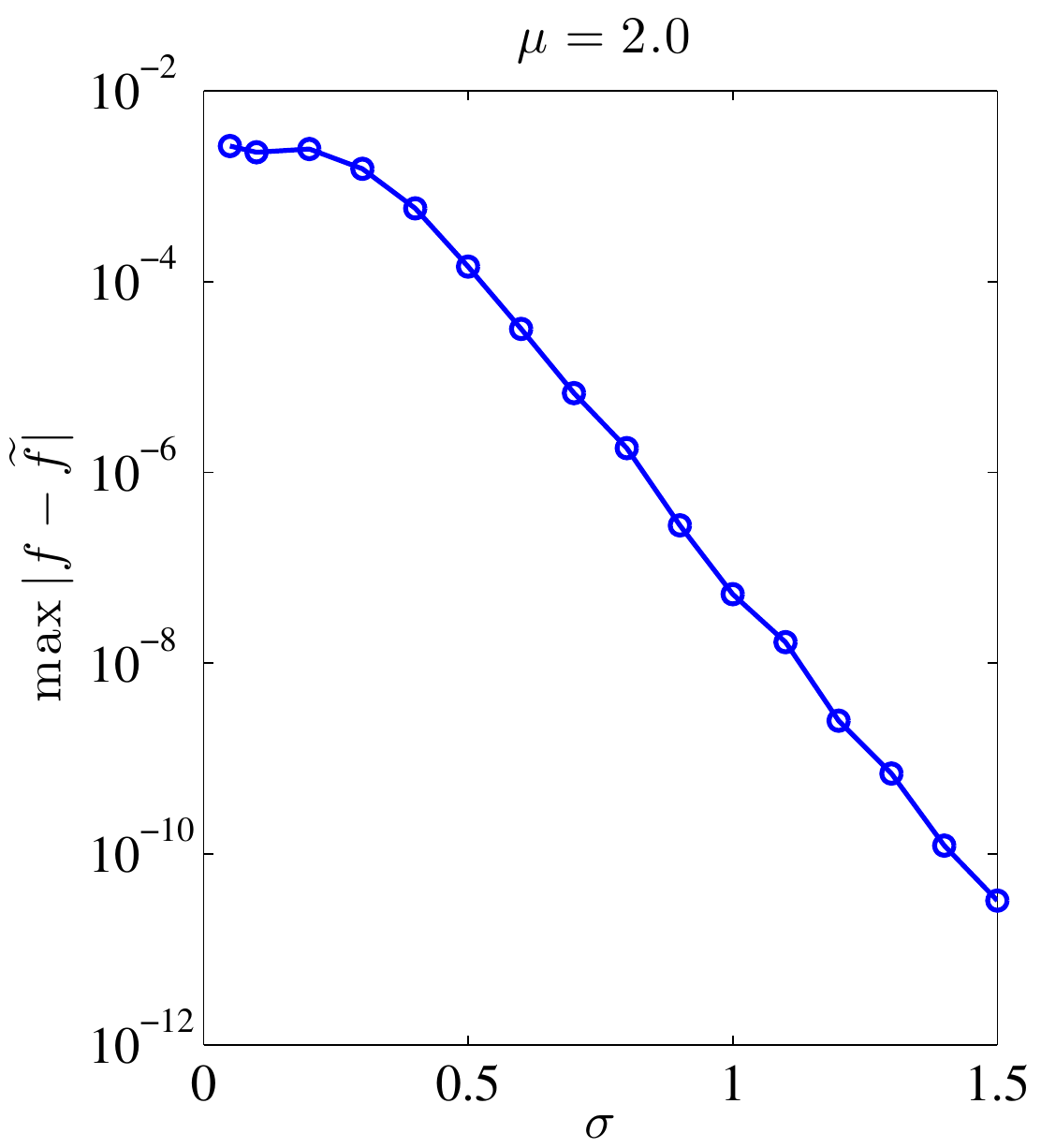}}\quad
    \subfloat[]{\includegraphics[width=0.3\textwidth]{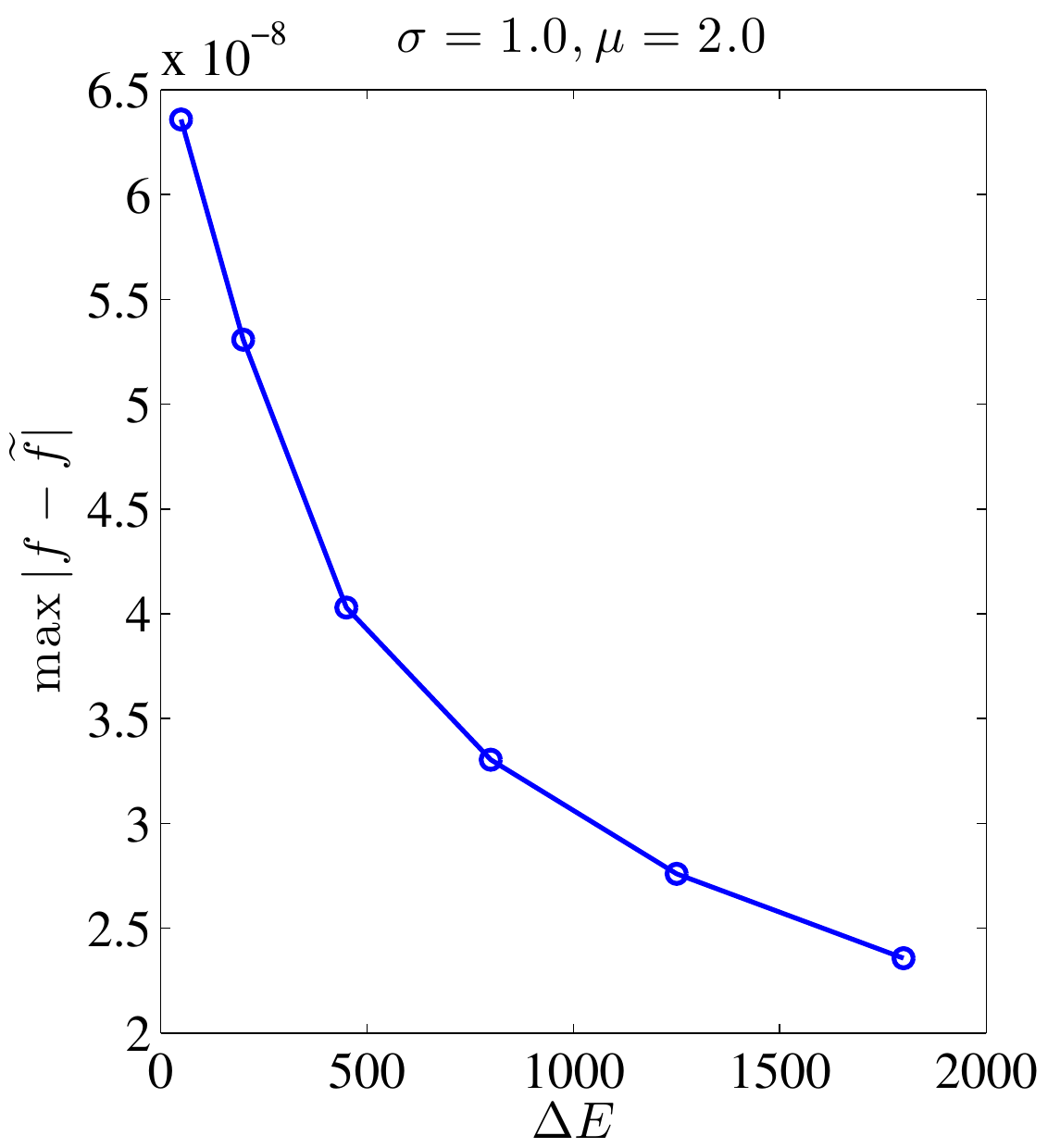}}
  \end{center}
  \caption{Max norm error of the LSS operator for (a)
  $\mu=2.0,\Delta E=199.89$ and
  increasing value of $\sigma$. (b) $\mu=2.0,\sigma=1.0$ and
  increasing value of $\Delta E$.}
  \label{fig:fErrorSigma}
\end{figure}

%
%
%

So far the numerical results are obtained for the divide-and-conquer
approximation to the LSS operator with $\wt{\tau}=0$.  Next we
apply the SVD truncation to obtain the LSS basis set
$\{U_{\kappa}\}_{\kappa=1}^{M}$ for varying SVD relative truncation criterion.
In our numerical experiments, we use $\tau$ as the \textit{relative} SVD
truncation criterion with respect to the largest singular value of
$\wt{S}_{\kappa}$.  Fig.~\ref{fig:fTolError} shows the error of the
approximation to the LSS operator with $\tau$ being $0.001,0.01,0.1$,
respectively. As indicated in Eq.~\eqref{eqn:localSVD}, the max norm
error of the approximation of the LSS operator is approximately
proportional to $\tau$, as $\tau$ becomes dominant in
Eq.~\eqref{eqn:maxerror}.

\begin{figure}[h]
  \begin{center}
    \subfloat[]{\includegraphics[width=0.3\textwidth]{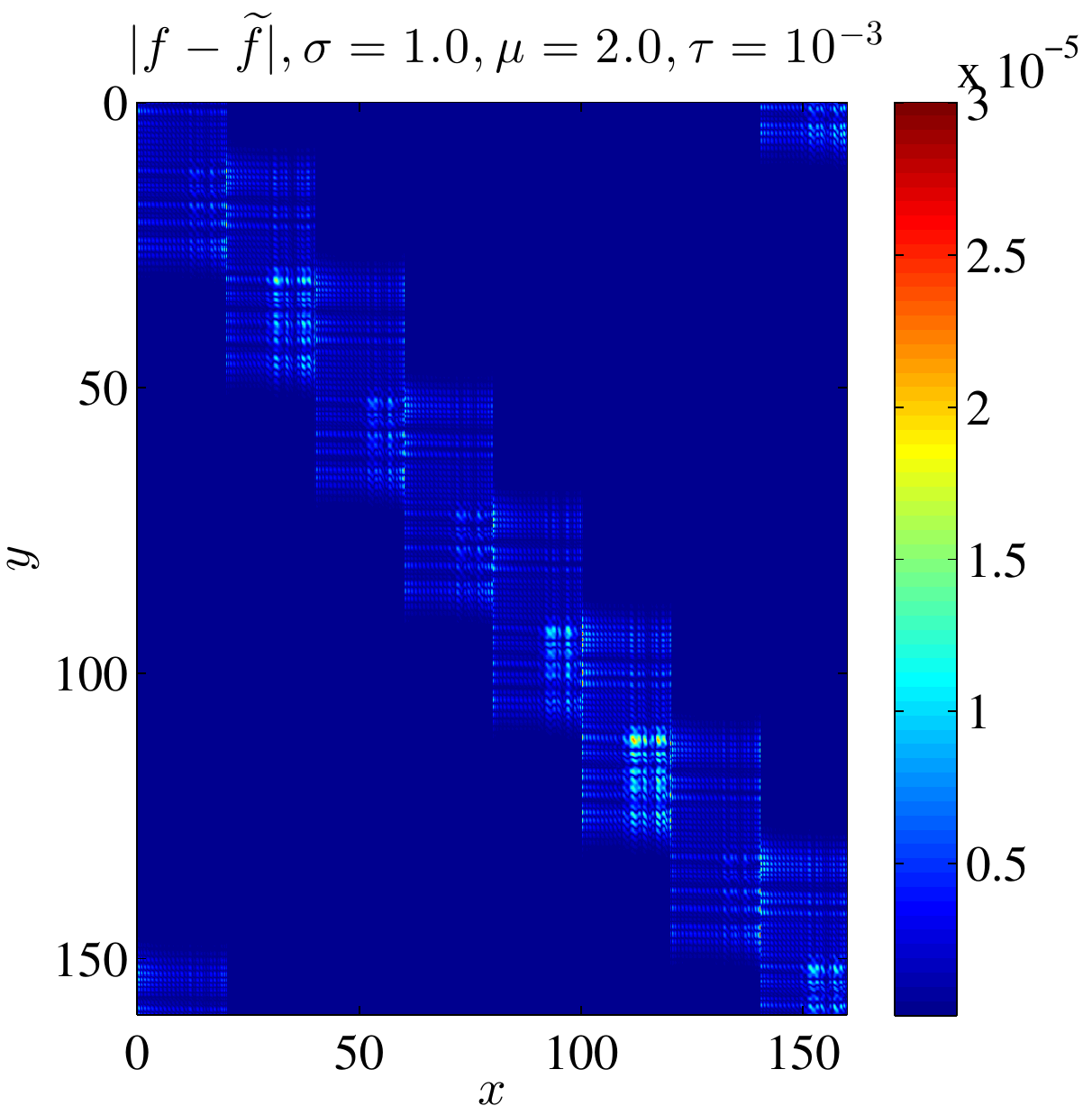}}
    \subfloat[]{\includegraphics[width=0.3\textwidth]{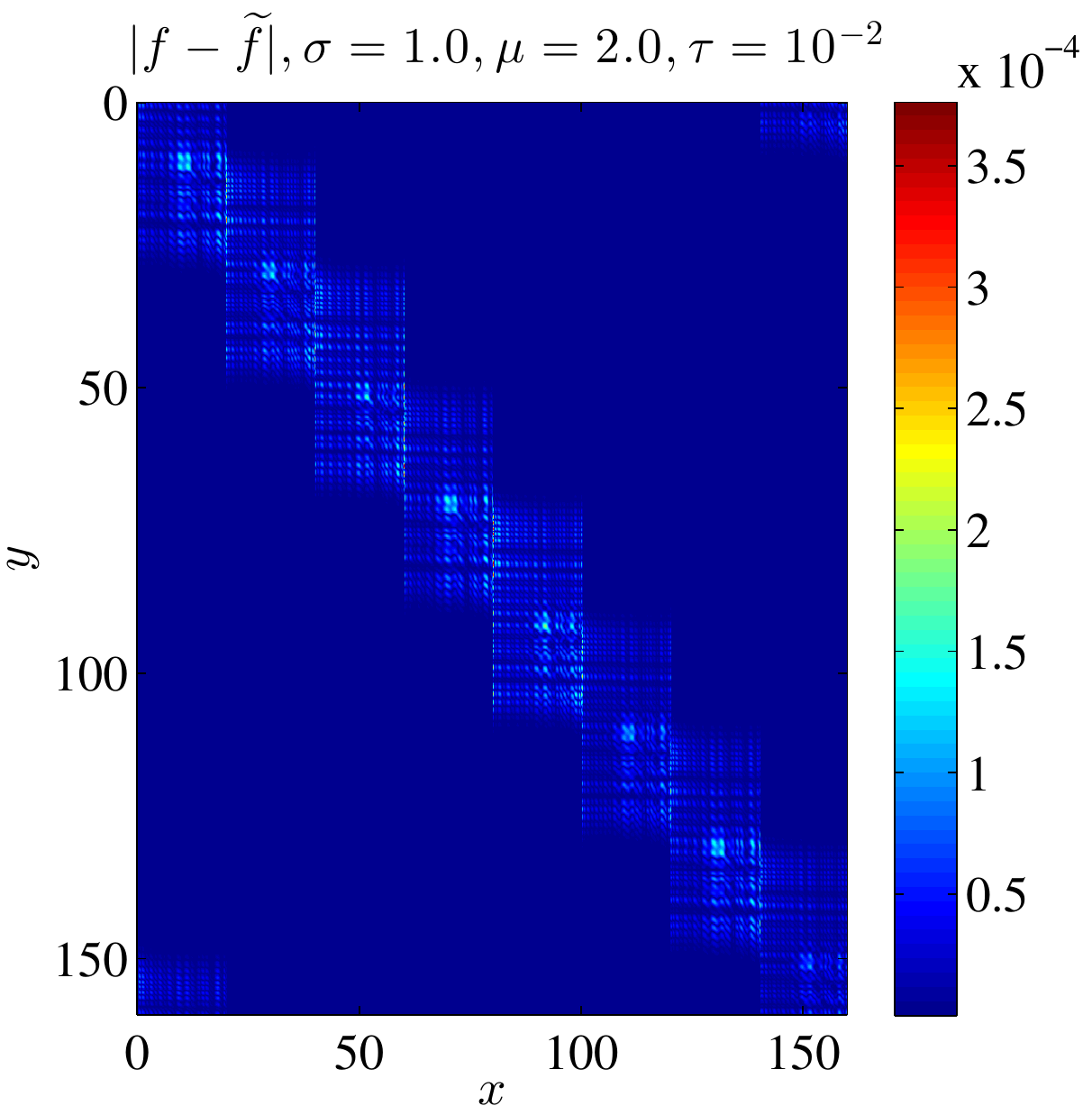}}
    \subfloat[]{\includegraphics[width=0.3\textwidth]{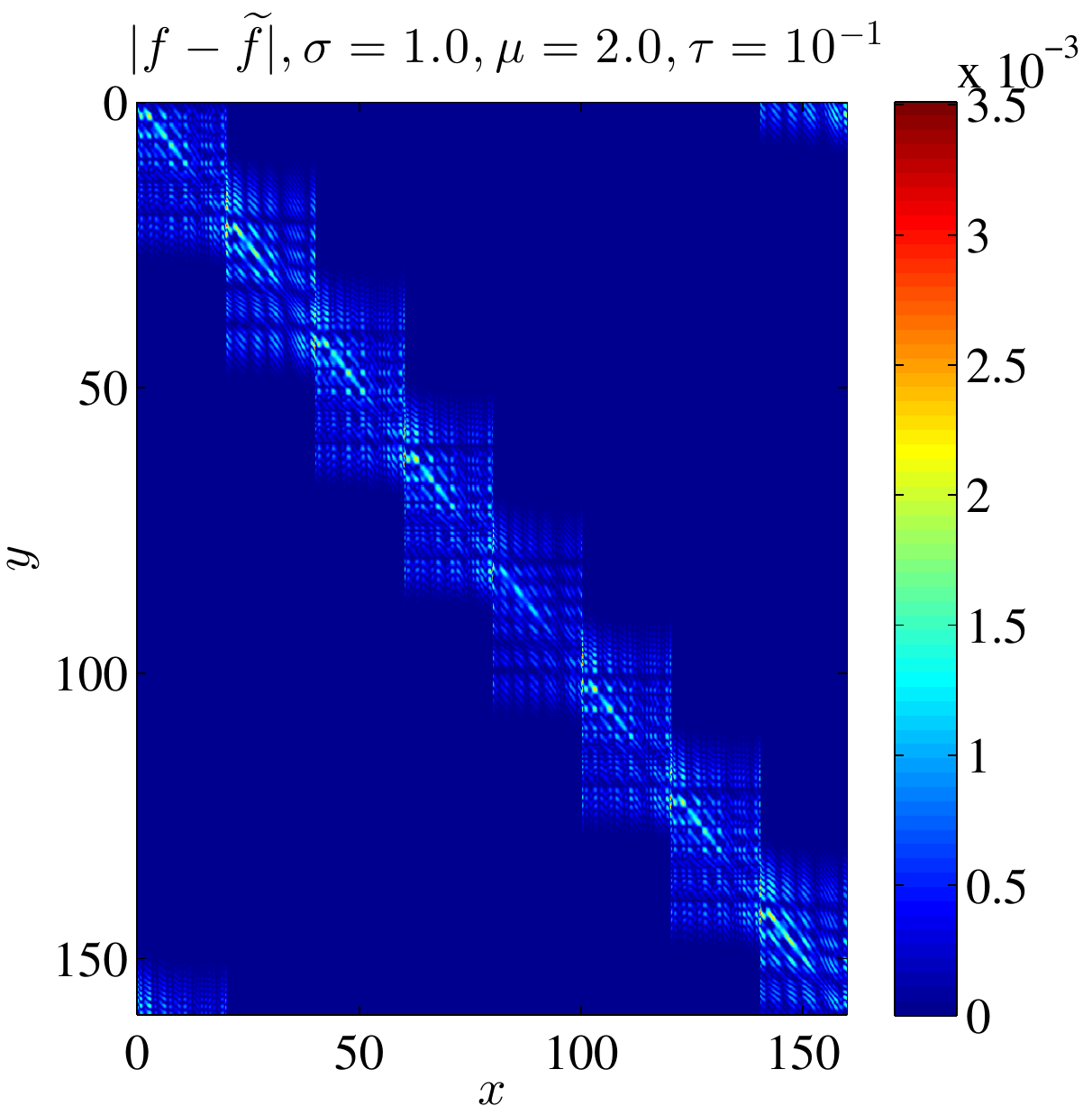}}
  \end{center}
  \caption{Error of the divide-and-conquer approximation to the LSS
  operator with $\sigma=1.0,\mu=2.0$ and different
  SVD relative truncation criterion (a)
  $\tau=10^{-3}$ (b) $\tau=10^{-2}$ (c) $\tau=10^{-1}$.}
  \label{fig:fTolError}
\end{figure}

The LSS basis set comes from the SVD decomposition of $\wt{f}$ on each
element.  Fig.~\ref{fig:basis1D} (a) shows the $1$-st LSS basis function
on two elements $\kappa=2$ and $\kappa=6$, respectively, and
Fig.~\ref{fig:basis1D} (b) shows the $5$-th LSS basis function on the
same two elements for $\mu=2.0,\sigma=1.0$.  It is clear that each LSS
basis function is well localized in each extended element
$Q_{\kappa}$ and its center is in $E_{\kappa}$.

\begin{figure}[h]
  \begin{center}
    \subfloat[]{\includegraphics[width=0.35\textwidth]{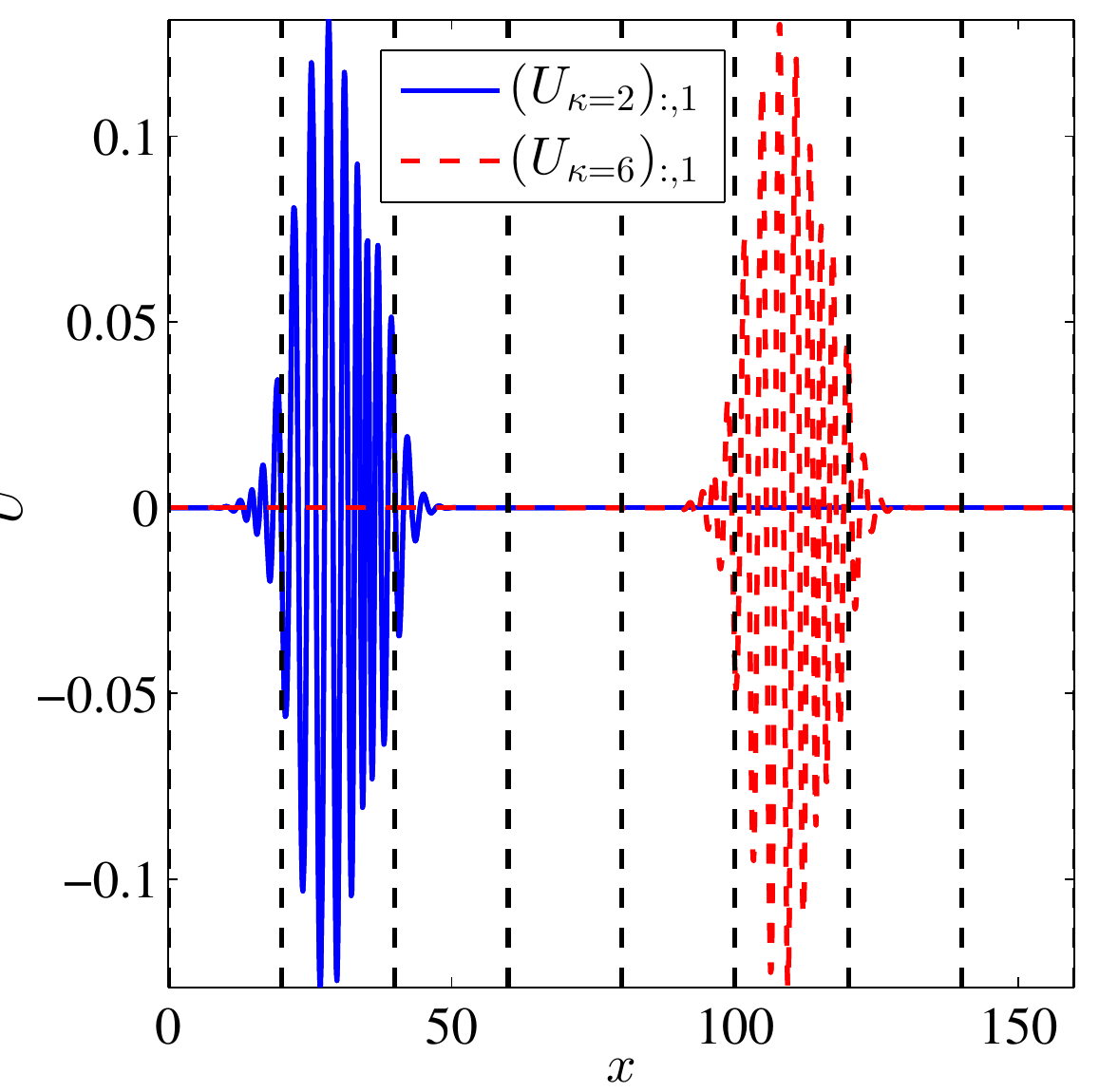}}\quad
    \subfloat[]{\includegraphics[width=0.35\textwidth]{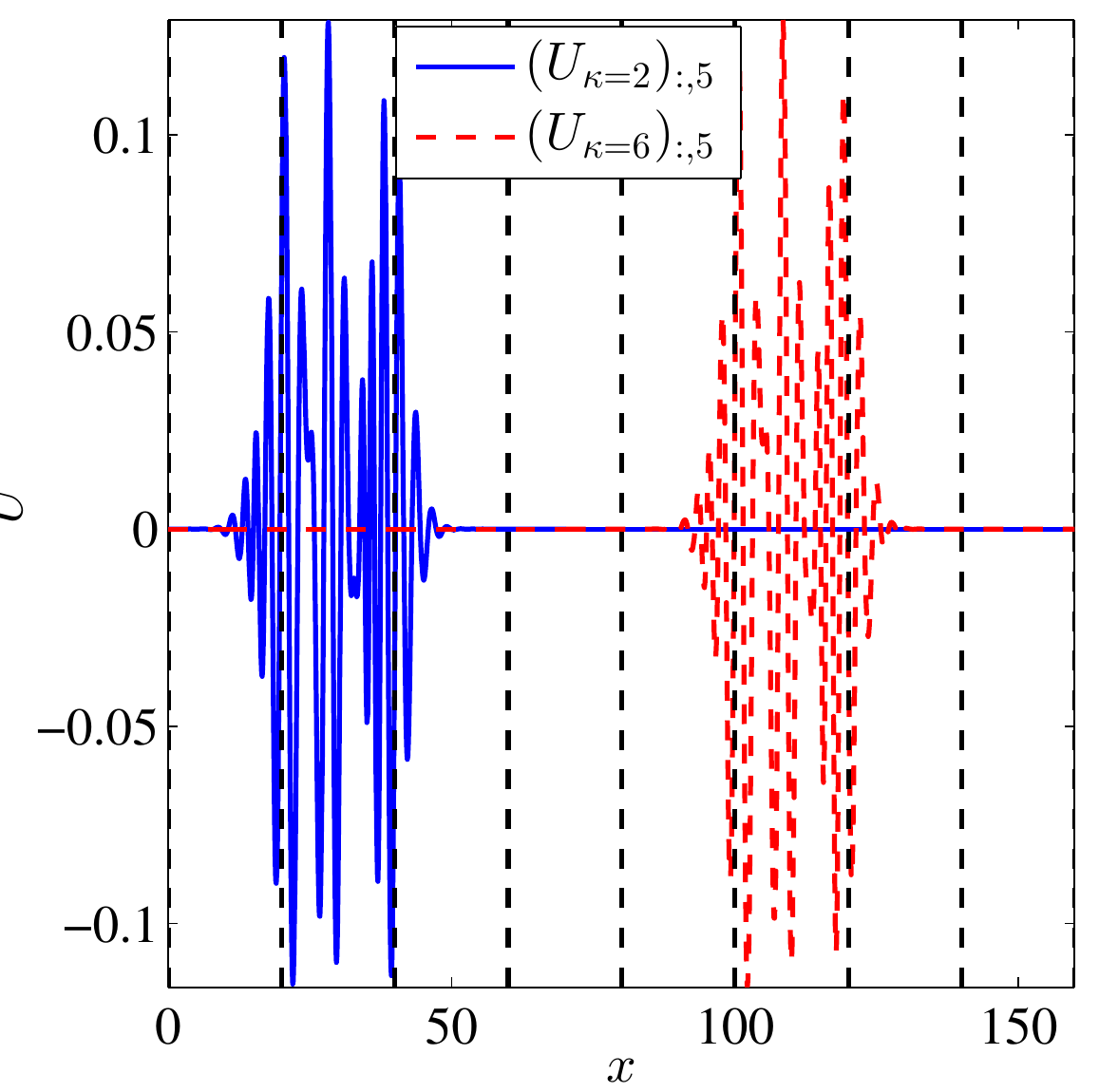}}
  \end{center}
  \caption{Example of the LSS basis function on two elements $\kappa=2$
  and $\kappa=6$ for (a) the $1$-st LSS basis function and (b) the
  $5$-th LSS basis function.}
  \label{fig:basis1D}
\end{figure}

Fig.~\ref{fig:fTolError} seems to suggest that in order to accurately
compute the interior eigenvalues, a very tight SVD criterion $\tau$ is
needed.  However, we note that many of the LSS basis functions associated with
the small singular values actually corresponds to the tail of the
Gaussian function in~\eqref{eqn:gaussian} which are away from $\mu$.
Therefore in order to compute the interior eigenvalues near $\mu$
accurately, it is possible to use a much larger value of $\tau$. 
Fig.~\ref{fig:errorEigTau} (a) shows the difference between the
$24$ eigenvalues of $A$ within the interval
$(\mu-0.5\sigma,\mu+0.5\sigma)$ and the corresponding Ritz values of $A$ with
$\tau=0.1$.  The computed Ritz values are highly accurate and the
maximum error is under $5\times 10^{-6}$ even though a large SVD
truncation criterion $\tau$ is used.  Section~\ref{sec:interior} discusses the
identification of spurious eigenvalues using the residual for each
computed Ritz value.  Indeed within the interval
$(\mu-0.5\sigma,\mu+0.5\sigma)$ we find $25$ Ritz values, and the
$1$ additional Ritz value should be a spurious eigenvalue.
Fig.~\ref{fig:errorEigTau} (b) shows 
$\norm{R_{j}}_{2}$ for each Ritz value, and we identify that the
$11$-th Ritz value has a much larger residual than the rest and should
be removed. After removing this spurious eigenvalue, the remaining Ritz
values become accurate approximation to the eigenvalues as indicated in
Fig.~\ref{fig:errorEigTau} (a).

\begin{figure}[h]
  \begin{center}
    \subfloat[]{\includegraphics[width=0.3\textwidth]{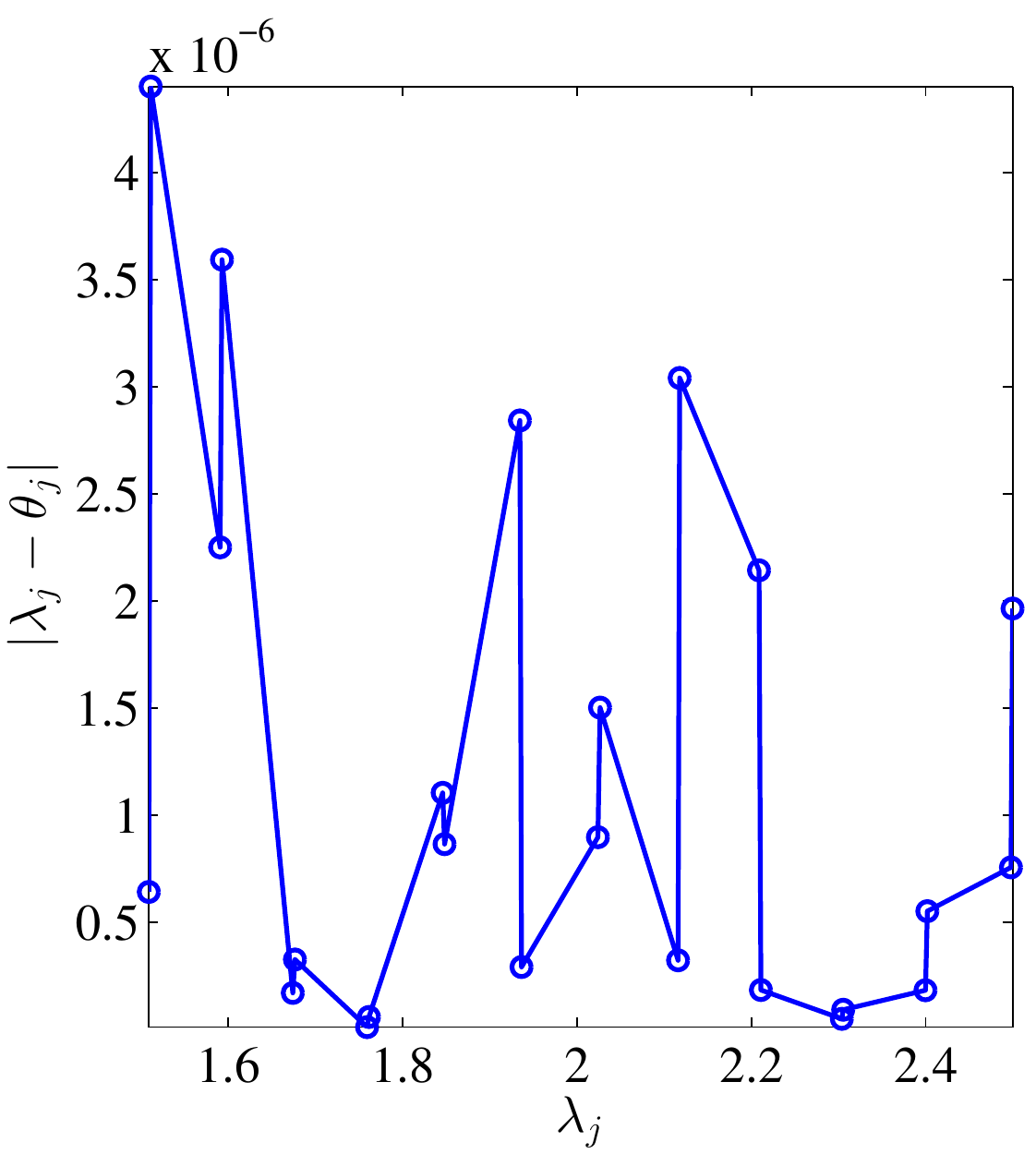}}\quad
    \subfloat[]{\includegraphics[width=0.3\textwidth]{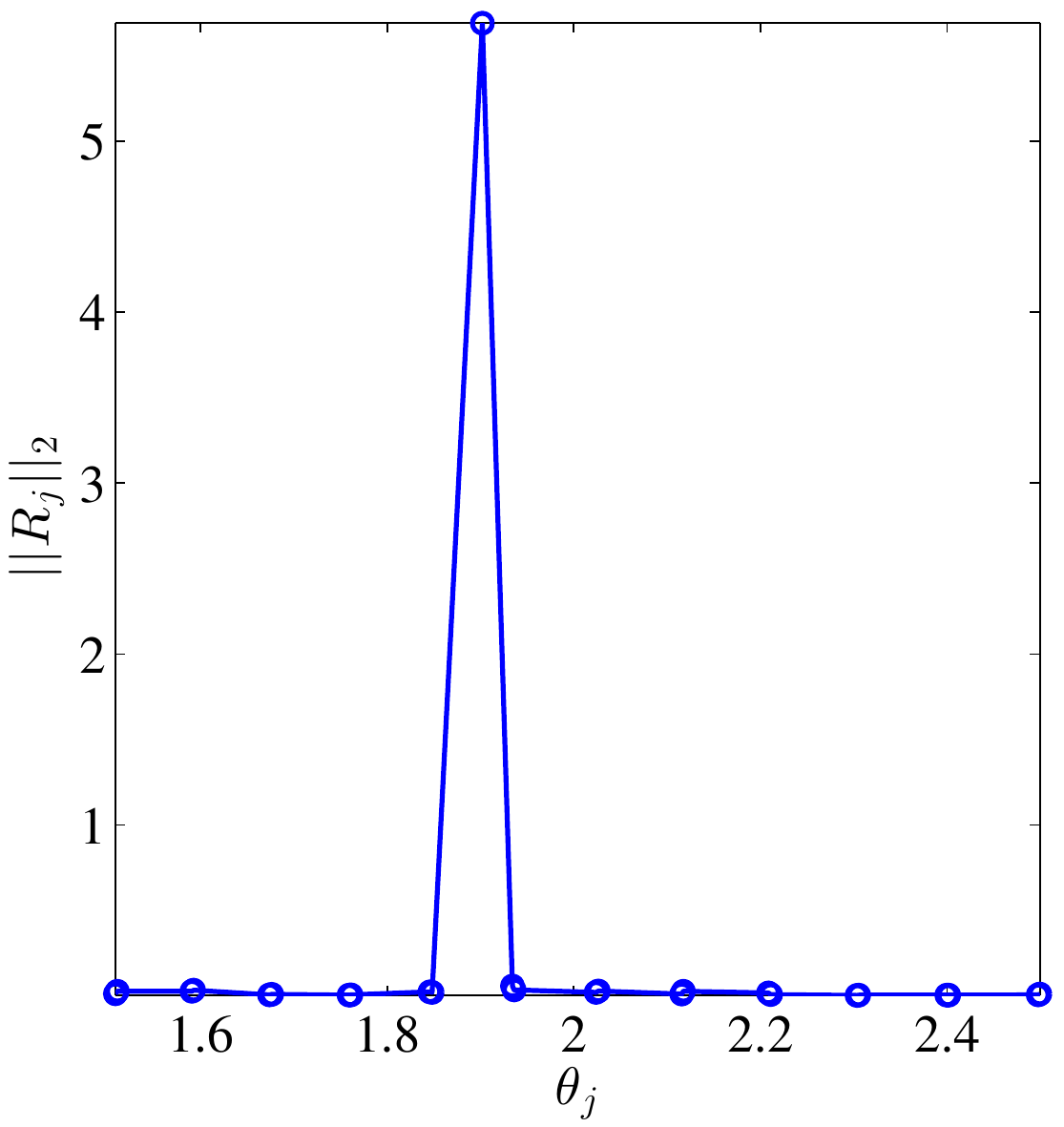}}
  \end{center}
  \caption{(a) Difference between the $24$ eigenvalues and corresponding Ritz values within
  the interval $(\mu-0.5\sigma,\mu+0.5\sigma)$ with
  $\sigma=1.0,\mu=2.0$. (b) The $2$-norm of the residual for each
  of the $25$ Ritz values. The $11$-th Ritz value has a large residual
  norm and is a spurious eigenvalue.}
  \label{fig:errorEigTau}
\end{figure}

While the accuracy of the divide-and-conquer
approximation to the LSS operator improves as the SVD truncation
criterion $\tau$ decreases, using a very small value of $\tau$ may
result in ill-conditioned projection matrices $A_{U}$ and $B_{U}$, i.e.
some of the LSS basis functions can be approximately represented as the
linear combination of other LSS basis functions.
Fig.~\ref{fig:cond_nb_tau} (a) shows the condition number of $A_{U}$,
$B_{U}$ with respect to $\tau$.  The condition numbers are below
$10^{4}$ when $\tau \ge 0.1$, and increase very rapidly to
$10^{13}$ for $\tau=10^{-3}$.  In the latter case, numerical results
obtained from the generalized eigenvalue solver cannot be trusted.
Decreasing $\tau$ also leads to increase of the size of the LSS basis set.
As $\tau$ decreases from $10^{-1}$ to $10^{-3}$, the number of LSS basis
functions increase from $87$ to $173$. The accuracy of the LSS basis set
for different values of $\tau$ is given in Table~\ref{tab:errorTau}.
When $\tau$ is too small, the number of computed Ritz values is less
than $24$ due to the very large condition number of the generalized
eigenvalue problem, and the difference between the eigenvalues and the
Ritz values is not a meaningful quantity to report and is reported as
N/A. The error of the
Ritz values reaches its minimum near $\tau=0.032$ at only $7.59\times
10^{-8}$, and then starts to increase as $\tau$ increases.  We observe
that even if $\tau=0.316$, the absolute (and relative) error of
the Ritz values is still within $0.2\%$. For this case the dimension of
the projected generalized eigenvalue problem is $62$, which is much
smaller compared to the dimension of $A$ which is $1600$.

\begin{figure}[h]
  \begin{center}
    \subfloat[]{\includegraphics[width=0.3\textwidth]{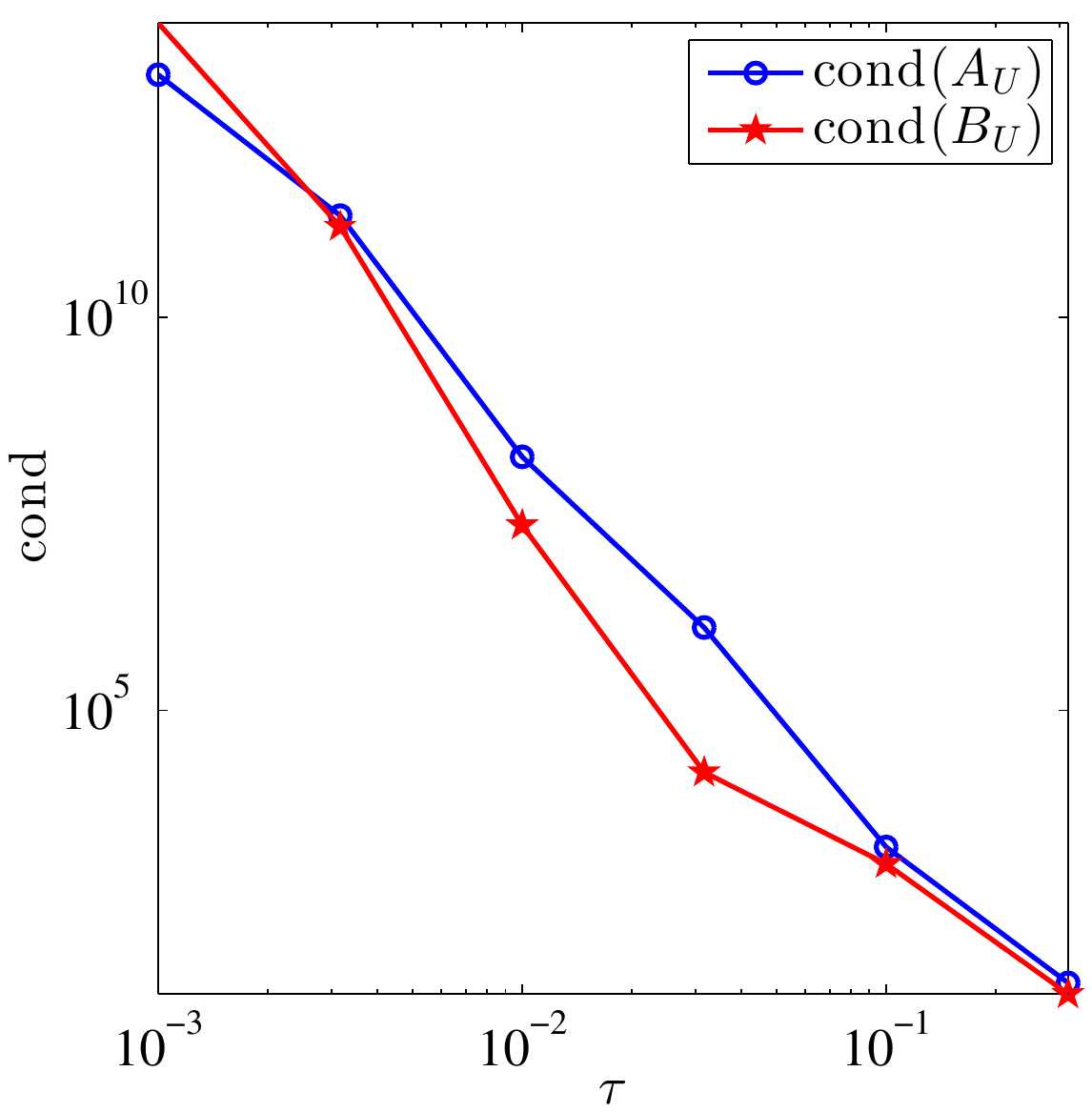}}\quad
    \subfloat[]{\includegraphics[width=0.3\textwidth]{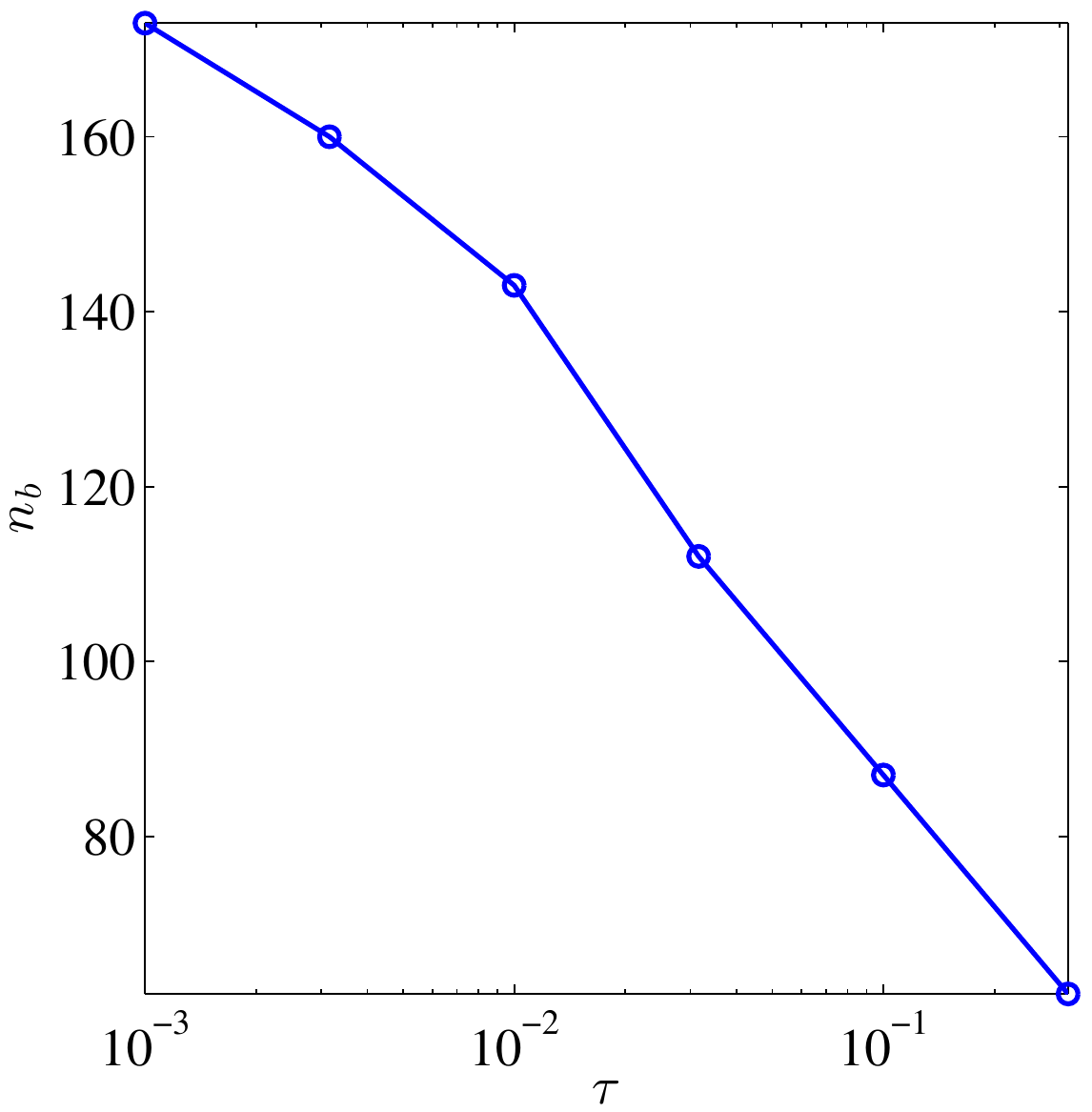}}
  \end{center}
  \caption{Error of the interior eigenvalue with $\mu=2.0$, $\sigma=1.0$
  and varying SVD relative truncation criterion $\tau$.}
  \label{fig:cond_nb_tau}
\end{figure}

\begin{table}[ht]
  \centering
  \begin{tabular}{c|c|c}
    \hline
    $\tau$ & \# Ritz values & $\max_{j}\abs{\lambda_{j}-\theta_{j}}$\\
    \hline
    $0.001$ & $1$  & N/A\\ 
    $0.003$ & $19$ & N/A\\ 
    $0.010$ & $24$ & $2.49\times 10^{-6}$ \\ 
    $0.032$ & $24$ & $7.59\times 10^{-8}$ \\ 
    $0.100$ & $24$ & $4.40\times 10^{-6}$ \\ 
    $0.316$ & $24$ & $1.50\times 10^{-3}$ \\ 
    \hline
  \end{tabular}
  \caption{The number of computed Ritz values in the interval 
  $(\mu-0.5\sigma,\mu+0.5\sigma)$ with $\sigma=1.0,\mu=2.0$
  (spurious eigenvalues removed).  If the number of Ritz values match
  the number of eigenvalues in the interval ($24$), then the third column gives the maximum difference
  between the eigenvalues and the Ritz values. Otherwise the third
  column gives N/A.}
  \label{tab:errorTau}
\end{table}

Even for the 1D simple example, the LSS basis set can be an efficient
way to compute interior eigenvalue problems compared to the solution of
the eigenvalue problem directly. For comparison of efficiency and
accuracy, MATLAB's sparse eigenvalue solver \textsf{eigs} is used for
the matrix $A$.  We acknowledge that
\textsf{eigs} may not be the best eigensolver to use for large interior
eigenvalue problems, and other choices such as preconditioned conjugate
gradient type of solvers, or Jacobi-Davidson type of solvers may give better
results.  We also remark that the current implementation of the LSS
solver is only for proof of principle, and many of its components can be
further optimized before a more thorough performance study is to be
performed.  Here we consider systems
of increasing size by changing $n_{w}$ in the potential function in
Eq.~\eqref{eqn:Vx} from $8$ to $256$. Correspondingly the number of grid
points $n$ increases from $1600$ to $51200$, and the number of elements
increases proportionally from $8$ to $256$.
$\mu=2.0,\sigma=1.0,\tau=3\times 10^{-2}$ is used for all systems to
compute the eigenvalues within the interval
$(\mu-0.5\sigma,\mu+0.5\sigma)$.  Fig.~\ref{fig:eiglarge1d} shows the
time for computing the interior eigenvalues near $\mu$ using MATLAB's
sparse eigenvalue solver \textsf{eigs} (``Global total''), and the time
using the LSS basis set (``LSS total''). The tolerance for
\textsf{eigs} is set to $10^{-5}$.  
The breakdown of the time cost for the LSS solver
includes the time for constructing the LSS basis set (``LSS basis''),
the time for assembling the projected matrix (``Assembly''), and the
time for solving the projected eigenvalue problem (``LSS solve'').
Fig.~\ref{fig:nnzAU} shows the sparsity pattern for $A_{U}$ for
$n=6400$, and the sparsity pattern for $B_{U}$ is by definition the
same.  The number of nonzero elements is $15.6\%$ of the total
number of elements in $A_{U}$.  The sparsity of the projected matrices
is not used in our example here, but can be exploited using alternative methods.

Since the size of the local problem is small, the local eigenvalue
problem on each $Q_{\kappa}$ is performed using MATLAB's dense
eigenvalue solver \textsf{eig}, and so is the solution of the
generalized eigenvalue problem for the projected matrix.  The time for
the global solver scales cubically with respect to $n$, and the
constructing the LSS basis and the assembly of the projected matrix
increases linearly with respect to $n$.  The solution of the generalized
eigenvalue problem also scales cubically with respect to $n$, and
therefore does not dominate in the LSS solver until $n=51200$.  The
cross-over time between the LSS solver and the global solver is around
$n=10000$. For $n=51200$, the LSS solver costs $46.6$ sec, which
is $11.2$ times faster than the global solver which costs $520.8$ sec.

Fig.~\ref{fig:eiglarge1d} (b) shows the accuracy of the LSS
solver.  The Ritz values remain as accurate approximation to the
eigenvalues as the number of eigenvalues in the interval increases from
$24$ to $706$.


\begin{figure}[h] \begin{center}
    \subfloat[]{\includegraphics[width=0.33\textwidth]{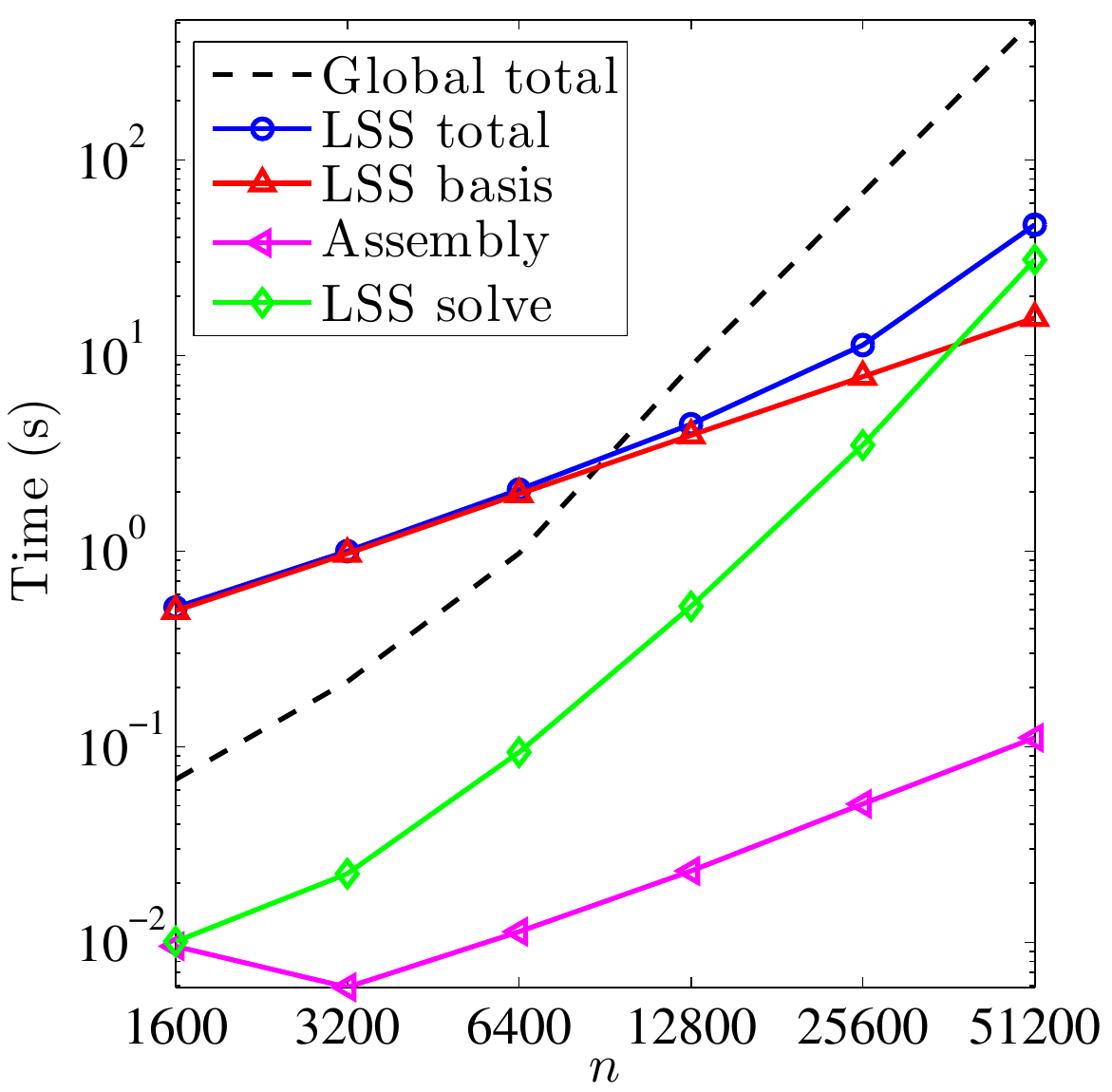}}\quad
    \subfloat[]{\includegraphics[width=0.31\textwidth]{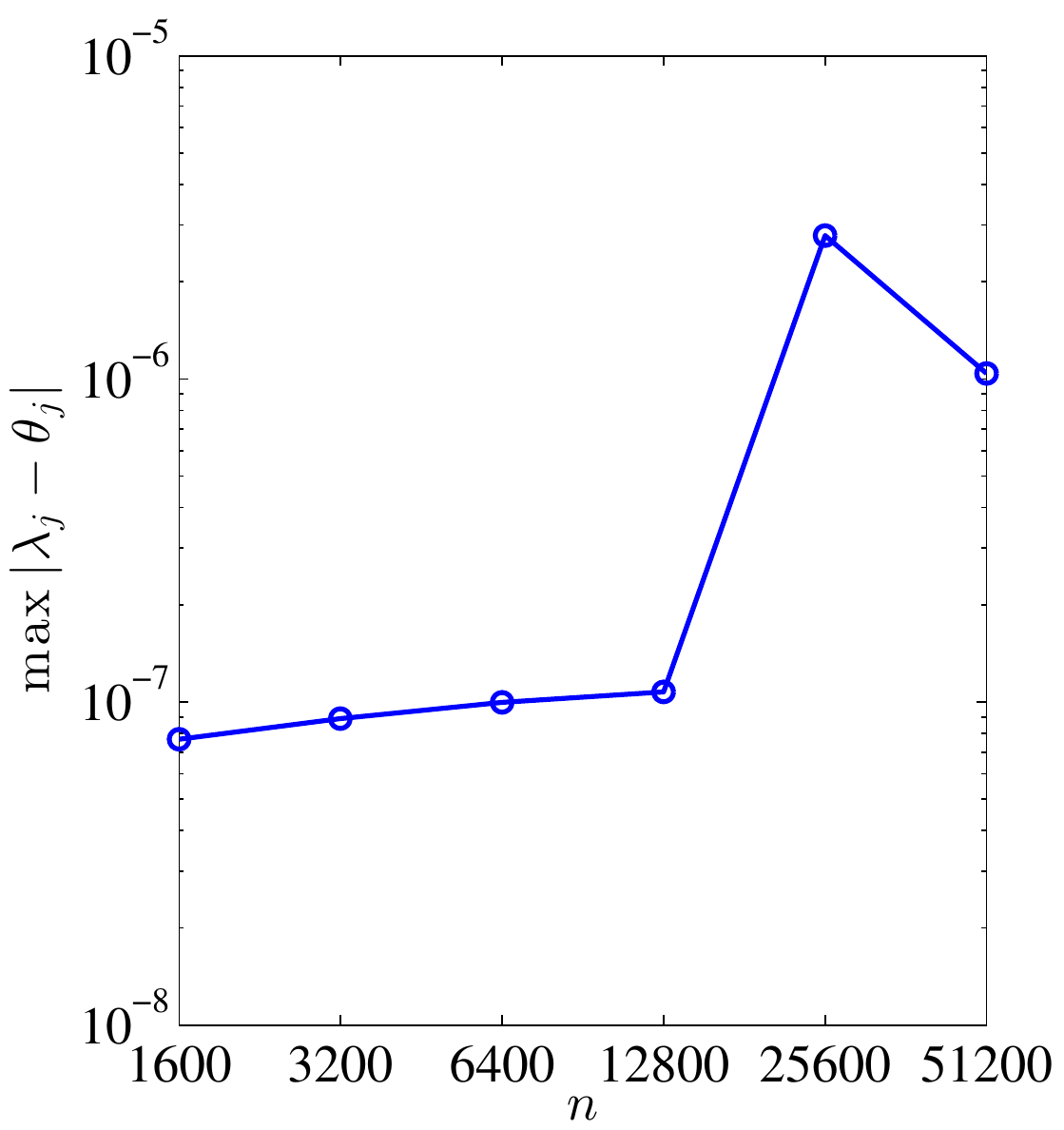}}
  \end{center}
  \caption{(a) Comparison of time cost between the global solver and the LSS
  solver for 1D interior eigenvalue problem with
  increasing system size. See text for details of the comparison. (b)
  Maximum error of the Ritz values.}
  \label{fig:eiglarge1d}
\end{figure}

\begin{figure}[h] 
  \begin{center}
    \includegraphics[width=0.33\textwidth]{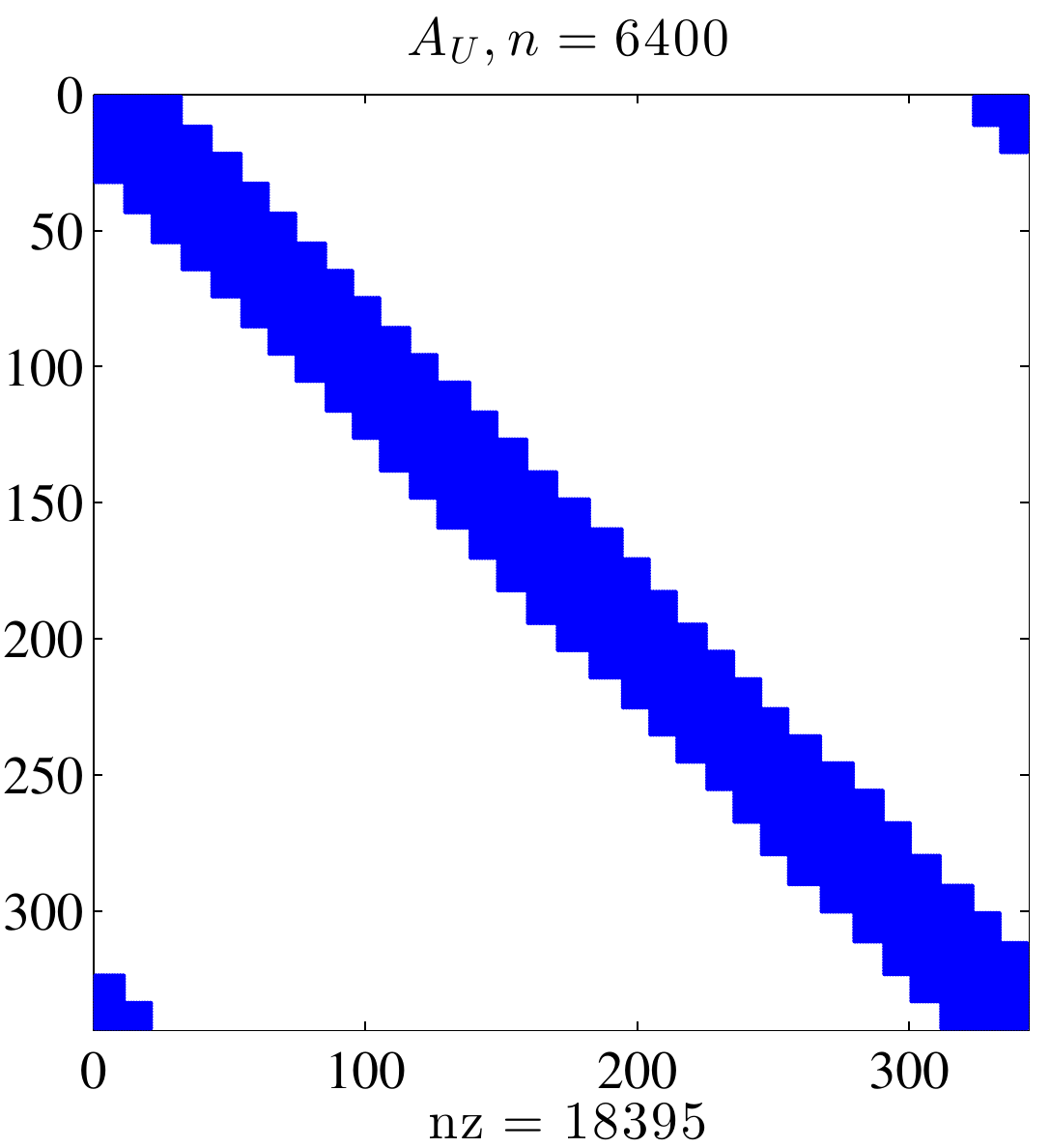}
  \end{center}
  \caption{Sparsity pattern for $A_{U}$ for $n=6400$.}
  \label{fig:nnzAU}
\end{figure}

\subsection{Two-dimensional case}

The setup of the 2D example is similar to that in 1D.  The global domain
is $\Omega=[0,L]\times [0,L]$, and the Laplacian operator is discretized
using a 5-point finite difference stencil.  The grid spacing is
chosen to be $h=1.0$.
The potential function $V(x,y)$ is given by sum of periodized
exponential functions \REV{with random perturbation in terms of heights,
widths and positions of the exponential functions. This can be viewed as
a model potential for a crystal under thermal noise.}  One realization
of this potential is given in Fig.~\ref{fig:V2D}.  
Let the number of elements $M$ is a square number and the number of grid
points $n$ is divisible by $M$. Then all $n$ grid points (vertices) are
uniformly partitioned into $\sqrt{M}\times \sqrt{M}$ elements. We also
assume each extended element $Q_{\kappa}$ contains $E_{\kappa}$ and its
$8$ nearest neighbor elements. Fig.~\ref{fig:V2D} shows the partition of
the 2D domain into $8\times 8=64$ elements separated by black dashed
lines.


\begin{figure}[h]
  \begin{center}
    \includegraphics[width=0.35\textwidth]{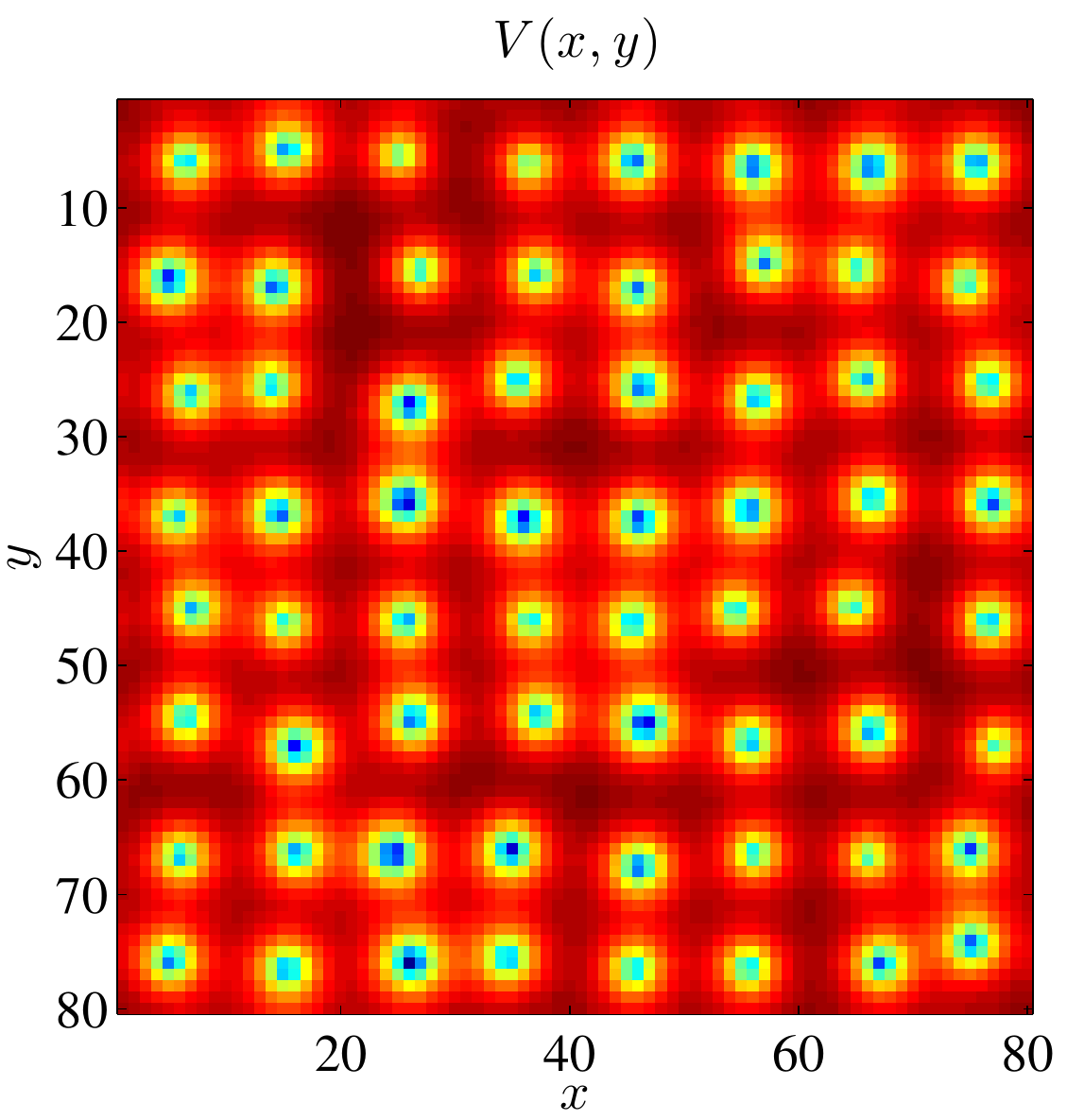}
  \end{center}
  \caption{One realization of the 2D potential.  The domain is
  partitioned into $8\times 8=64$ elements separated by black dashed
  lines.}
  \label{fig:V2D}
\end{figure}

\REV{We compare the accuracy of the LSS basis set by comparing the
eigenvalues within the interval $(\mu-\sigma,\mu+\sigma)$ with
$\mu=-1.0,\sigma=1.0$.} The SVD relative truncation criterion $\tau$ is set to be
$10^{-1}$.  Fig.~\ref{fig:accuracy2D} (a) shows the error of Ritz values
compared to all the $828$ eigenvalues within the interval, and the error
of all Ritz values is very small, within \REV{$7\times 10^{-5}$}.
Fig.~\ref{fig:accuracy2D} (b) shows the residual of the Ritz values.
For all the Ritz values the residual are below \REV{$7\times 10^{-3}$} and
no spurious eigenvalue is identified for this case. 


\begin{figure}[h]
  \begin{center}
    \subfloat[]{\includegraphics[width=0.3\textwidth]{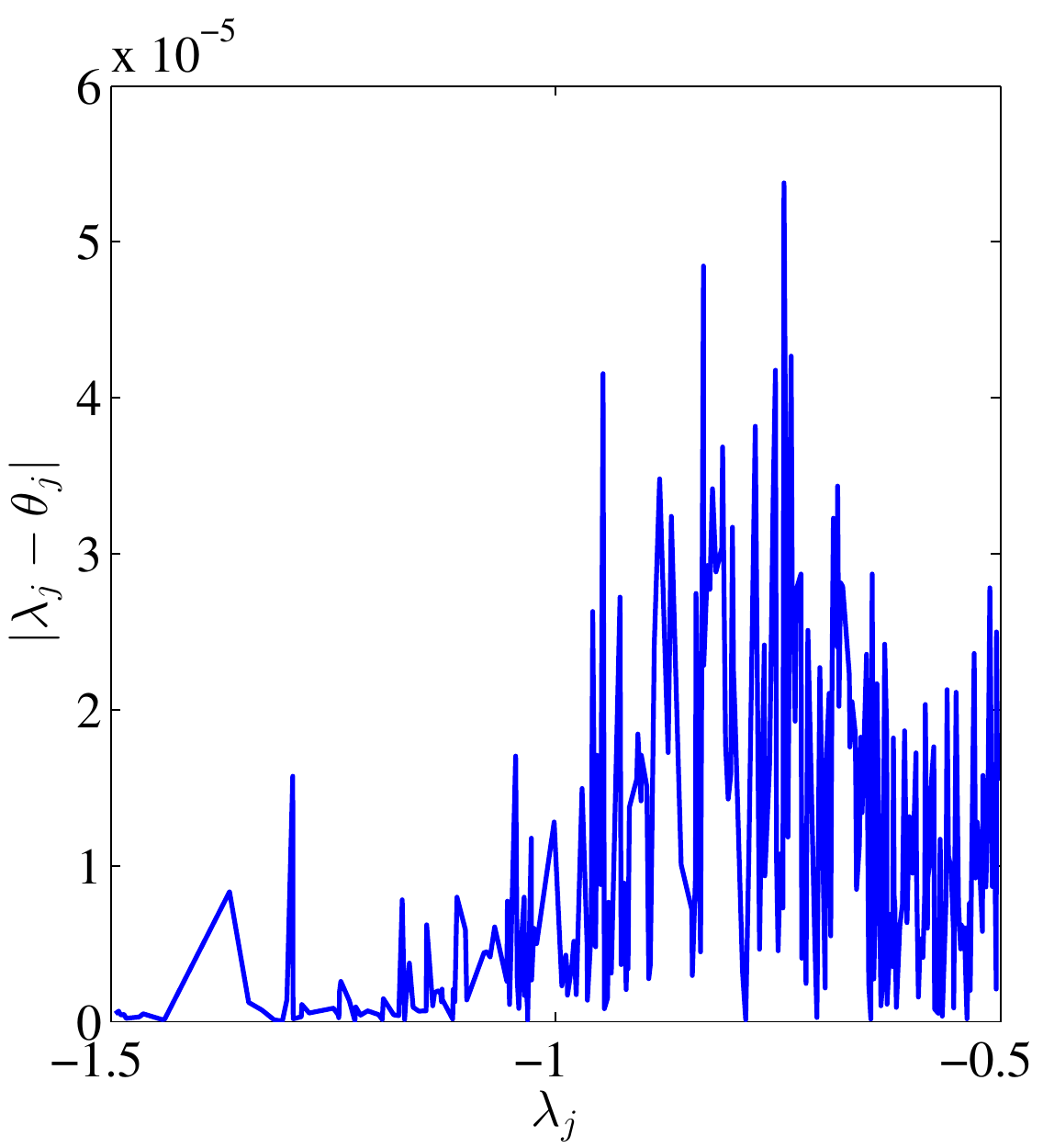}}\quad
    \subfloat[]{\includegraphics[width=0.3\textwidth]{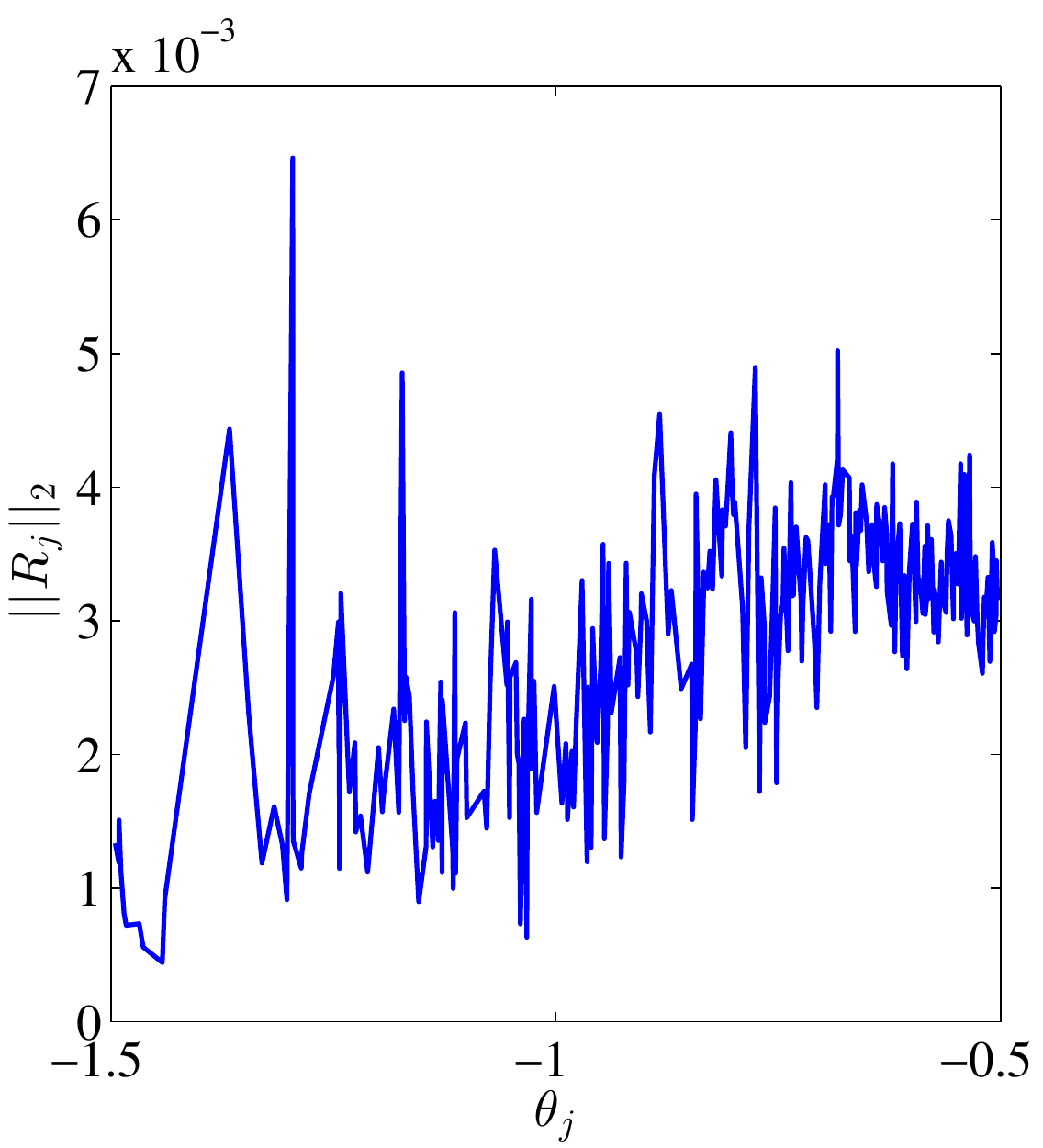}}
  \end{center}
  \caption{\REV{(a) Error of the Ritz values (b) The $2$-norm of residual
  corresponding to Ritz values for the 2D problem with
  $\mu=-1.0,\sigma=1.0,\tau=10^{-1}$.}}
  \label{fig:accuracy2D}
\end{figure}

Finally we demonstrate the performance of the LSS solver for a 2D
problem with increasing size.  The number of grid points $n$ increases
from $1600$ to $25600$, and the number of elements increases
proportionally from $16$ to $256$.
 Fig.~\ref{fig:eiglarge2d} shows the time
for computing the interior eigenvalues near $\mu$ using MATLAB's sparse
eigenvalue solver \textsf{eigs} (``Global total''), and the time using
the LSS basis set (``LSS total'').  The tolerance for \textsf{eigs} is
set to $10^{-5}$.  The breakdown of the LSS solver includes the time for
constructing the LSS basis set (``LSS basis''), the time for assembling
the projected matrix (``Assembly''), and the time for solving the
projected eigenvalue problem (``LSS solve'').  Again the local
eigenvalue problem on each $Q_{\kappa}$ is performed using MATLAB's
dense eigenvalue solver \textsf{eig}, and so is the solution of the
generalized eigenvalue problem for the projected matrix.  The crossover
point between the global solver and the LSS solver is around $n=3000$.
\REV{For $n=25600$, the LSS solver costs $143$ sec, which is $8.3$
times faster than the global solver which costs $1183$ sec.}

Fig.~\ref{fig:eiglarge2d} (b) shows the accuracy of the LSS
solver.  The Ritz values remain as accurate approximation to the
eigenvalues as the number of eigenvalues in the interval increases with
respect to the system size and no spurious eigenvalue is observed for
all cases.

\begin{figure}[]
  \begin{center}
    \subfloat[]{\includegraphics[width=0.33\textwidth]{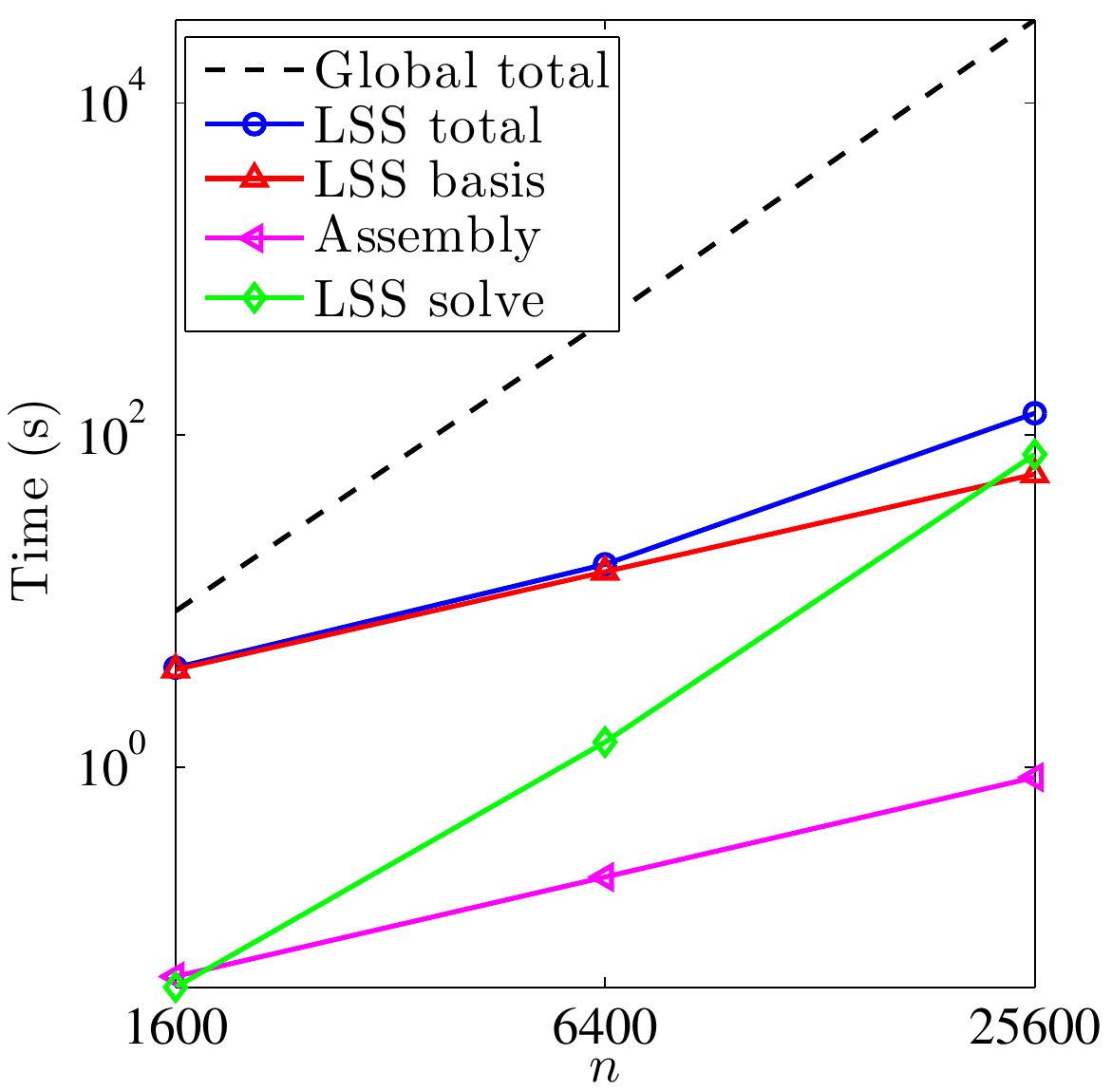}}
    \subfloat[]{\includegraphics[width=0.31\textwidth]{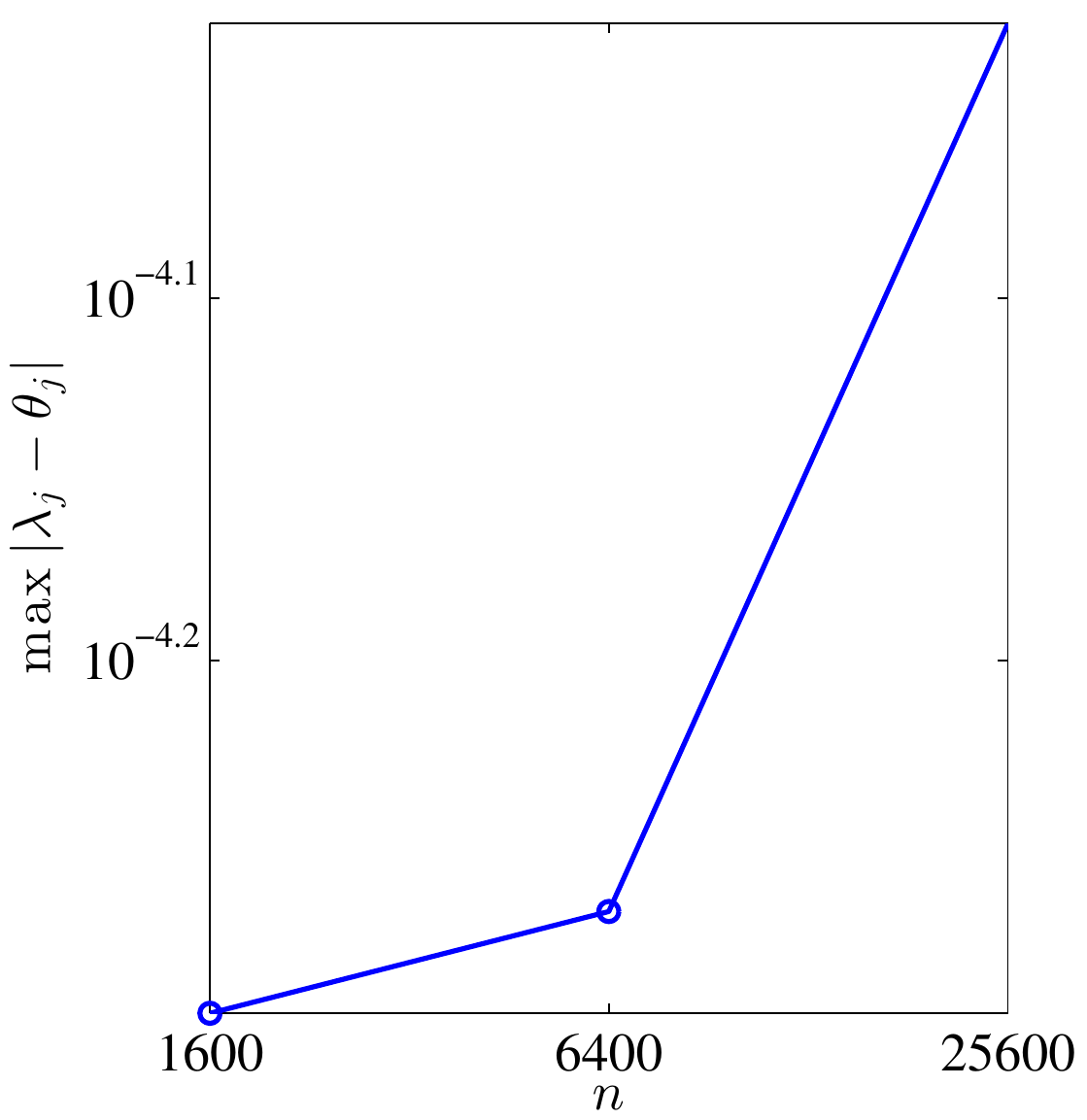}}
  \end{center}
  \caption{\REV{(a) Comparison of time cost between the global solver and the LSS
  solver for 2D interior eigenvalue problem with
  increasing system size. See text for details of the comparison. (b)
  Maximum error of the Ritz values.}}
  \label{fig:eiglarge2d}
\end{figure}

\REV{
\subsection{Sparse matrix with general sparsity pattern}

For a general sparse matrix, we take the \texttt{turon-m} matrix 
from the University of Florida matrix collection~\cite{FloridaMatrix}.  The
dimension of the matrix is 189924, with 1690876 number of nonzeros. 
The LU factorization procedure for this matrix is relatively expensive.
Using the approximate minimum degree 
(AMD) ordering strategy provided through the \texttt{symamd} command in
MATLAB~\cite{DavisGilbertLarimoreEtAl2004}. The number of nonzeros in
$L$ and $U$ are $364176421$ with a fill-in ratio (i.e. the ratio between
the number of nonzeros in $L,U$ and the number of nonzeros in $A$) is $215$.
The LU factorization takes $952$ sec, and each triangular solve
$U^{-1}(L^{-1}b)$ for a random right hand side vector $b$ takes $0.52$
sec, compared to each matrix vector multiplication $Ax$ which takes
$0.006$ sec.  The spectral radius of this matrix is $86$.  The sparsity
pattern of this matrix, together with the
histogram of the eigenvalues (unnormalized spectral density) in the
interval $(1,7)$ is given in Fig.~\ref{fig:turon_m} (a) (b),
respectively.

\begin{figure}[h]
  \begin{center}
    \subfloat[]{\includegraphics[width=0.31\textwidth]{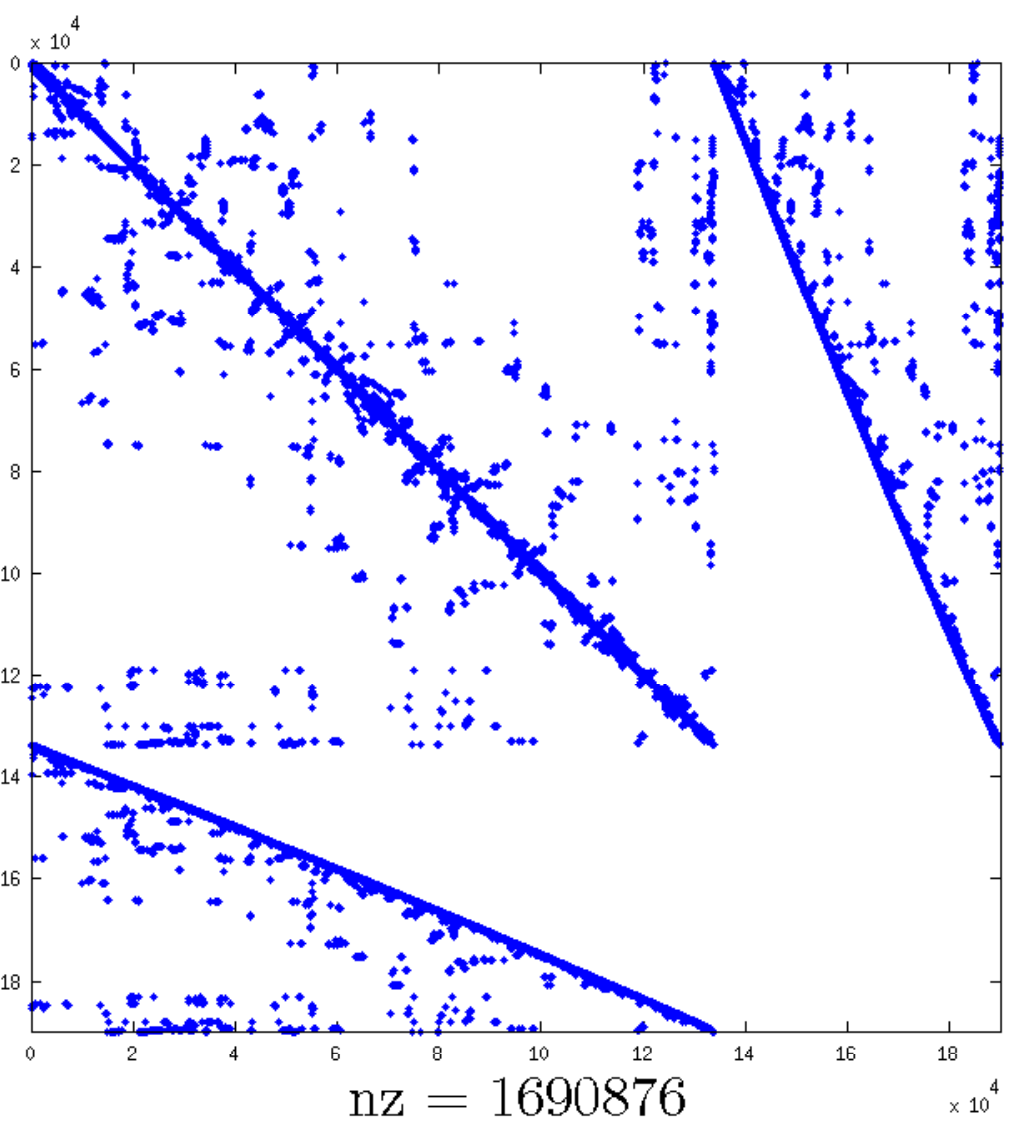}}
    \subfloat[]{\includegraphics[width=0.33\textwidth]{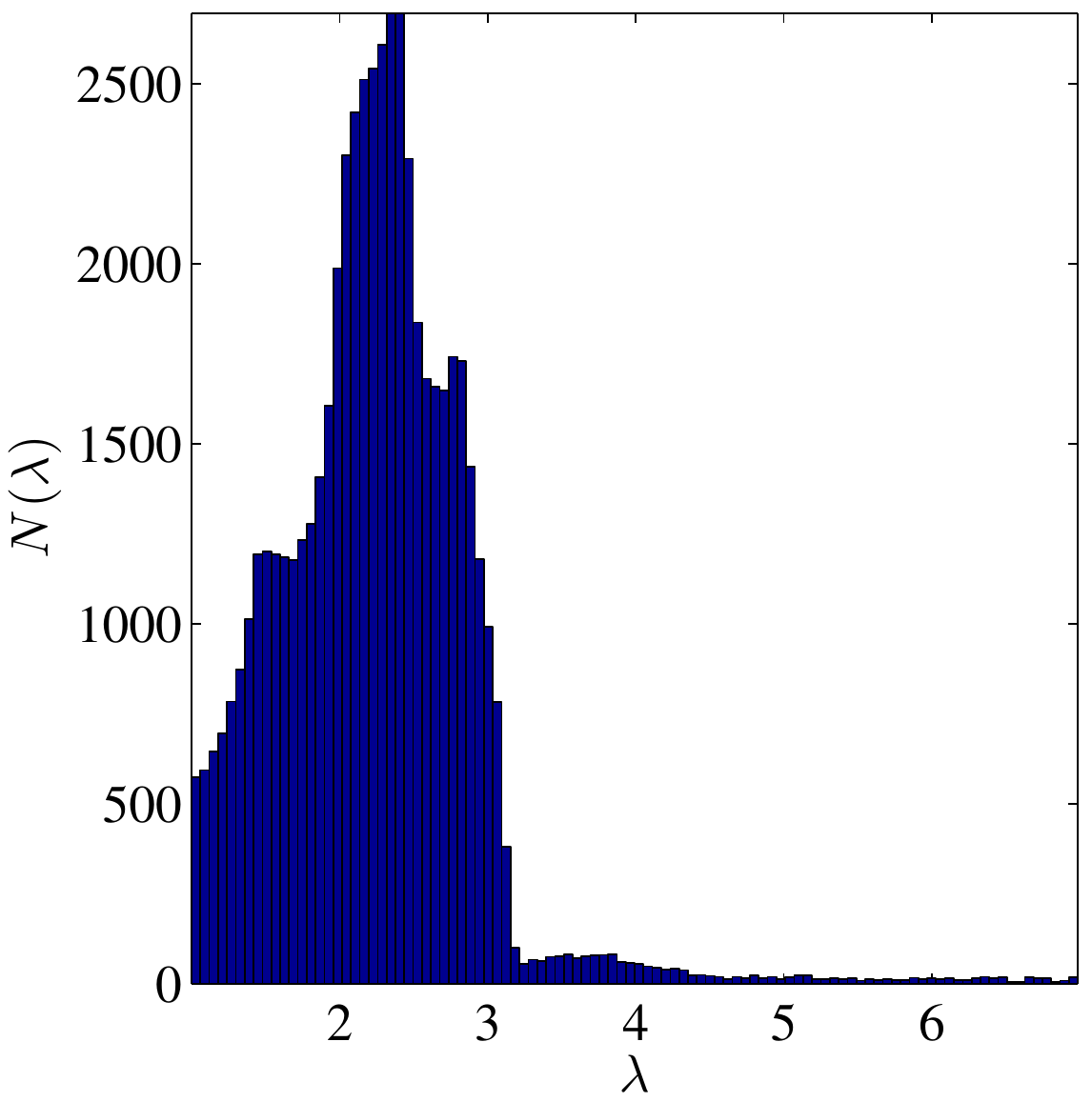}}
    \subfloat[]{\includegraphics[width=0.32\textwidth]{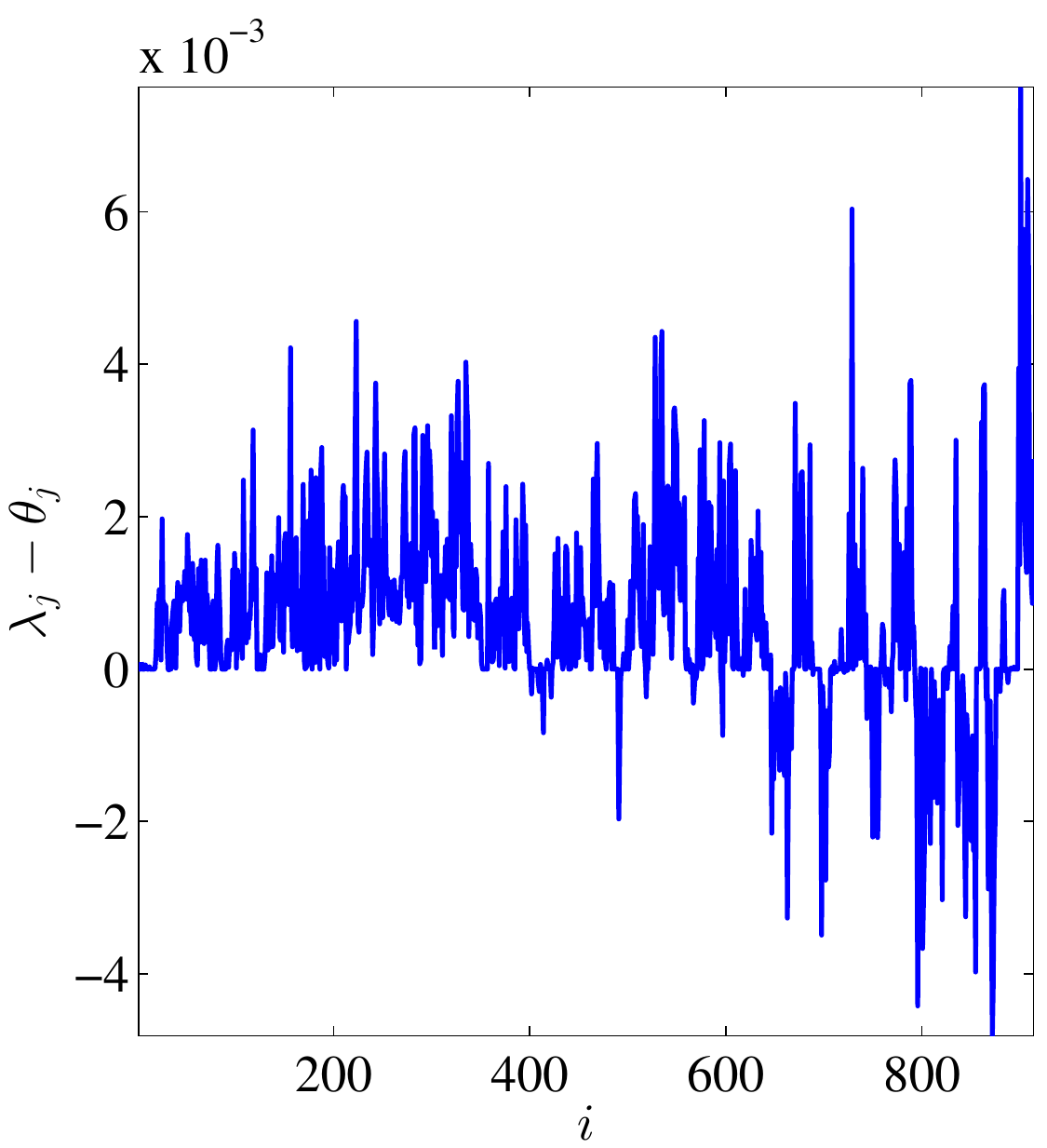}}
  \end{center}
  \caption{(a) Sparsity pattern and (b) histogram of the eigenvalues in
  the interval $(1,7)$ (c) Accuracy of the Ritz values for the interior
  eigenvalues in the interval $(3.5,4.5)$ of the
  \texttt{turon-m} matrix.}
  \label{fig:turon_m}
\end{figure}

In order to apply the LSS method to this unstructured matrix, we use the
strategy in section~\ref{subsec:lssgeneral} and use the
METIS~\cite{KarypisKumar1998}
package interfaced by the \texttt{metismex}
program\footnote{{https://github.com/dgleich/metismex}} with MATLAB
for generating the graph partitioning map $\xi$. 

We set $\mu=4.0,\sigma=0.5$. As in Fig.~\ref{fig:turon_m} (b), $\mu=4.0$ 
indeed corresponds to interior eigenvalues. We select this region mainly
because the spectral density is relatively low so that the computation
can be treated on a single computational core.  The matrix is
partitioned into $16$ elements using METIS. The matrix partition routine
is efficient and only takes $0.55$ sec. Due to the large size of the
submatrix 
on a single extended element, we use \textsf{eigs} to solve $500$
eigenvalues on each extended element with tolerance set to $10^{-5}$, and
set the SVD relative truncation criterion $\tau$ to be
$0.05$.   The size of the projected matrix is $8000$, which is much
reduced compared to the dimension of $A$. The projected
generalized eigenvalue problem is solved with the dense eigenvalue
solver \texttt{eig}.

We compare the accuracy of the LSS basis set by comparing the
eigenvalues within the interval $(\mu-\sigma,\mu+\sigma)=(3.5,4.5)$. There are
$914$ eigenvalues in this interval, and \texttt{eigs} takes $1886$ sec to
converge to tolerance with $10^{-5}$.  For LSS, the time for computing
the basis functions for all $16$ elements is $3989$ sec . The time for
constructing the projected matrix is $12$ sec, and the time for solving
the projected matrix is $93$ sec. For the projected matrix, we find
$919$ eigenvalues in total, and identified $5$ spurious spurious
eigenvalues. After removing the spurious eigenvalues with the largest
residual, the accuracy of the Ritz values compared to the true
eigenvalues are given in Fig.~\ref{fig:turon_m} (c).  In this case, the
LSS method is more expensive. This is mainly due to the cost for
constructing the LSS basis functions. However, this part can be
potentially performed independently for each element and without
inter-element communication on parallel computers.}
%
%

\section{Conclusion}\label{sec:conclusion}

In this paper, we present a method for constructing a novel basis set
called the localized spectrum slicing (LSS) basis set. Each function in
the LSS basis set is localized both spectrally and spatially, and
therefore can be used as an efficient way for representing 
eigenvectors of a general sparse Hermitian matrix corresponding to a
relatively narrow range of eigenvalues. The LSS basis set uses the decay
properties of analytic matrix functions, and can be constructed in a
divide-and-conquer method.   We show that by carefully tuning one
parameter $\sigma$, spatial locality and spectral locality of the basis
functions can be balanced. The projected matrices are both sparse and
have reduced sizes.


In terms of the future work, the Gaussian function used in the LSS
operator is a smooth approximation to the $\delta$ function.  The same
concept of locality can be used to approximate other matrix functions,
such as matrix sign functions. 
This aspect is, e.g. closely related to the recently developed
adaptive local basis functions~\cite{LinLuYingE2012} and element
orbitals~\cite{LinYing2012} for constructing efficient basis functions
for solving the Kohn-Sham density functional theory. 
The LSS basis set can also be used to efficiently characterize the
eigenvectors close to the null space of $A$, which could potentially be used to
construct preconditioners to accelerate linear solves for indefinite
problems. 

From efficiency point of view, in the current implementation, the local
eigenvalue problem is solved mostly using a dense eigenvalue solver.  This is
still feasible for the 1D and 2D model problems presented in the
numerical section in this paper, but for 3D problems this is going to be
too expensive.  Efficient iterative solvers, or local Chebyshev
expansion based schemes should be used instead. Another practical issue
is to control the condition number of the LSS basis set when the SVD
truncation criterion is small.  An efficient way to identify a subset of
well conditioned LSS basis functions is needed to be more robust.

The balance between spatial and spectral locality is an important topic
in Fourier analysis and multi-resolution analysis.  Because the
construction of the LSS basis set is completely algebraic and can be
applied to any sparse Hermitian matrix, it is possible to extend the
current work to construct multi-resolution basis functions tailored for
given matrices, or multi-resolution basis functions for operators on graphs. 

\section*{Acknowledgments}

This work was supported by Laboratory Directed Research and Development
(LDRD) funding from Berkeley Lab, provided by the Director, Office of
Science, of the U.S. Department of Energy under Contract No.
DE-AC02-05CH11231, the DOE Scientific Discovery through the Advanced
Computing (SciDAC) program and the DOE Center for Applied Mathematics
for Energy Research Applications (CAMERA) program.

\end{document}